\newtheorem{theorem}{Theorem}[section]
\newtheorem{lemma}[theorem]{Lemma}
\newtheorem{proposition}[theorem]{Proposition}
\newtheorem{corollary}[theorem]{Corollary}
\newtheorem{claim}[theorem]{Claim}
\newenvironment{proof}[1][Proof]{\noindent\textbf{#1.} }{\ \rule{0.5em}{0.5em}}
\newcommand{\R}{\mathbb{R}}
\newcommand{\Z}{\mathbb{Z}}
\newcommand{\N}{\mathbb{N}}
\newcommand{\M}{\mathcal{M}}
\begin{document}

\title{\bf Existence and concentration phenomena for a class of indefinite variational problems with critical growth}

\author{Claudianor O. Alves\thanks{C. O. Alves was partially supported by CNPq/Brazil 301807/2013-2 and INCT-MAT, coalves@mat.ufcg.edu.br}\, , \, \ Geilson F. Germano \thanks{G. F. Germano was partially supported by CAPES, geilsongermano@hotmail.com}\vspace{2mm}
	\and {\small  Universidade Federal de Campina Grande} \\ {\small Unidade Acad\^emica de Matem\'{a}tica} \\ {\small CEP: 58429-900, Campina Grande - Pb, Brazil}\\}

\date{}
\maketitle

\begin{abstract} 
In this paper we are interested to prove the existence and concentration of ground state solution for the following class of problems
$$
-\Delta u+V(x)u=A(\epsilon x)f(u), \quad x \in \R^{N}, 
\eqno{(P)_{\epsilon}}
$$
where $N \geq 2$, $\epsilon>0$, $A:\R^{N}\rightarrow\R$ is a continuous function that satisfies
$$
0<\inf_{x\in\R^{N}}A(x)\leq\lim_{|x|\rightarrow+\infty}A(x)<\sup_{x\in\R^{N}}A(x)=A(0),\eqno{(A)}
$$
$f:\R\rightarrow\R$ is a continuous function having critical growth, $V:\R^{N}\rightarrow\R$ is a continuous and $\Z^{N}$--periodic function with $0\notin\sigma(\Delta+V)$.  By using variational methods, we prove the existence of solution for $\epsilon$ small enough. After that, we show that the maximum points of the solutions concentrate around of a maximum point of $A$.

	\vspace{0.3cm}
	
	\noindent{\bf Mathematics Subject Classifications (2010):} 35B40, 35J2, 47A10 .
	
	\vspace{0.3cm}
	
	\noindent {\bf Keywords:}   Concentration of solutions, Variational methods, Indefinite strongly functional, Critical growth.
\end{abstract}

\section{Introduction}

This paper concerns with the existence and concentration of ground state solution for the semilinear Schr\"odinger equation
$$
\left\{\begin{array}{l}
-\Delta u+V(x)u=A(\epsilon x)f(u), \quad x \in \R^{N}, \\
u\in H^{1}(\R^{N}),
\end{array}\right. \eqno{(P)_{\epsilon}}
$$
where $N \geq 2$, $\epsilon$ is a positive parameter, $f: \mathbb{R} \to \mathbb{R}$ is a continuous function with critical growth and $V,A: \mathbb{R} \to \mathbb{R}$ are continuous functions verifying some technical conditions. 

In whole this paper, $V$ is $\mathbb{Z}^N$-periodic with   
$$
0 \not\in \sigma(-\Delta + V), \quad \mbox{the spectrum of } \quad -\Delta +V,  \eqno{(V)}
$$
which becomes the problem strongly indefinite. Related to the function $A$, we assume that it is a continuous function satisfying 
$$
0< A_0=\inf_{x \in \R^{N}}A(x)\leq \displaystyle\lim_{|x|\rightarrow+\infty}A(x)=A_\infty <\sup_{x \in \R^{N}}A(x). \eqno{(A)}
$$		

The present article has as first motivation some recent articles that have studied the existence of ground state solution for related problems with $(P)_{\epsilon}$, more precisely for strongly indefinite problems of the type  
$$
\left\{\begin{array}{l}
-\Delta u+V(x)u=f(x,u), \quad x \in  \R^{N}, \\
u\in H^{1}(\R^{N}).
\end{array}\right. \eqno{(P_1)}
$$
In \cite{Szulkin1}, Kryszewski and Szulkin  have studied the existence of ground state solution for $(P_1)$ by supposing the condition $(V)$. Related to the function $f:\mathbb{R}^N \times \mathbb{R} \to \mathbb{R}$, they assumed that $f$ is continuous, $\mathbb{Z}^N$-periodic in $x$ with
$$
|f(x,t)| \leq c(|t|^{q-1}+|t|^{p-1}), \quad \forall t \in \mathbb{R} \quad \mbox{and} \quad x \in \mathbb{R}^N \eqno{(h_1)}
$$ 
and
$$
0<\alpha F(x,t) \leq tf(x,t) \quad \forall (x,t) \in \mathbb{R}^N \times \mathbb{R}^* , \quad F(x,t)=\int_{0}^{t}f(x,s)\,ds \eqno{(h_2)}
$$
for some $c>0$, $\alpha >2$ and $2<q<p<2^{*}$ where $2^{*}=\frac{2N}{N-2}$ if $N \geq 3$ and $2^{*}=+\infty$ if $N=1,2$. The above hypotheses guarantee that the energy functional associated with $(P_1)$  given by 
$$
J(u)=\frac{1}{2}\int_{\mathbb{R}^N}(|\nabla u|^{2}+V(x)|u|^{2})\,dx-\int_{\mathbb{R}^N}F(x,u)\,dx, \,\, \forall u \in H^{1}(\R^N),
$$
is well defined and belongs to $C^{1}(H^{1}(\mathbb{R}^N), \mathbb{R})$. By $(V)$, there is an equivalent inner product $\langle \;\; , \;\; \rangle $  in $H^{1}(\mathbb{R}^N)$ such that 
$$
J(u)=\frac{1}{2}\|u^+\|^{2}-\frac{1}{2}\|u^-\|^{2} -\int_{\mathbb{R}^N}F(x,u)\,dx,
$$
where $\|u\|=\sqrt{\langle u,u \rangle}$ and  $H^{1}(\mathbb{R}^N) = E^{+} \oplus E^-$ corresponds to the spectral decomposition of $- \Delta + V $ with respect to the
positive and negative part of the spectrum with $u = u^{+}+u^{-}$, where $u^{+} \in E^{+}$ and $u^{-} \in E^{-}$.  In order to show the existence of solution for $(P_1)$, Kryszewski and Szulkin  introduced a new and interesting generalized link theorem. In \cite{LiSzulkin},  Li and Szulkin have improved this generalized link theorem to prove the existence of solution for a class of strongly indefinite problem with $f$ being asymptotically linear at infinity.   

The link theorems above mentioned have been used in a lot of papers, we would like to cite   Chabrowski and Szulkin \cite{CS}, do \'O and Ruf \cite{DORUF},   Furtado and Marchi \cite{furtado}, Tang \cite{tang, tang2} and their references.

Pankov and Pfl\"uger \cite{Pankov-Pfluger} also have considered the existence of solution for problem $(P_1)$ with the same conditions considered in \cite{Szulkin1}, however the approach is based on an approximation technique of periodic function together with the linking theorem due to Rabinowitz \cite{Rabinowitz}. After, Pankov \cite{Pankov} has studied the  existence of solution for problems of the type  
$$
\left\{\begin{array}{l}
-\Delta u+V(x)u=\pm f(x,u), \quad x \in \R^{N}, \\
u\in H^{1}(\R^{N}),
\end{array}\right. \eqno{(P_2)}
$$
by supposing $(V)$, $(h_1)-(h_2)$ and employing the same approach explored in \cite{Pankov-Pfluger}.  In   \cite{Pankov} and \cite{Pankov-Pfluger}, the existence of ground state solution has been established by supposing that $f$ is $C^{1}$ and there is $\theta \in (0,1)$ such that
$$
0<t^{-1}f(x,t)\leq \theta f'_t(x,t), \quad \forall t \not=0 \quad \mbox{and} \quad x \in \mathbb{R}^N. \eqno({h_3})
$$
However, in  \cite{Pankov}, Pankov has found a ground state solution by minimizing the energy functional  $J$ on the set
$$
\mathcal{O}=\left\{u\in H^{1}(\R^{N})\setminus E^{-}\ ;\ J'(u)u=0\text{ and }J'(u)v=0,\forall\ v\in E^{-}\right\}. 
$$ 
The reader is invited to see that if $J$ is strongly definite, that is, when $E^{-}=\{0\}$,  the set $\mathcal{O}$ is exactly the Nehari manifold associated with $J$. Hereafter, we say that  $u_0 \in H^{1}(\mathbb{R}^{N})$ is a {\it ground state solution} if 
$$
J'(u_0)=0, \quad u_0 \in \mathcal{O} \quad \mbox{and} \quad J(u_0)=\inf_{w \in \mathcal{O}}J(w).
$$

In \cite{SW}, Szulkin and Weth have established the existence of ground state solution for problem $(P_1)$ by completing the study made in  \cite{Pankov}, in the sense that, they also minimize the energy functional on  $\mathcal{O}$, however they have used more weaker conditions on $f$, for example $f$ is continuous, $\mathbb{Z}^N$-periodic in $x$ and satisfies 
$$
|f(x,t)|\leq C(1+|t|^{p-1}), \;\; \forall t \in \mathbb{R} \quad \mbox{and} \quad x \in \mathbb{R}^N  \eqno{(h_4)}
$$ 
for some $C>0$ and $p \in (2,2^{*})$.
$$
f(x,t)=o(t) \,\,\, \mbox{uniformly in } \,\, x \,\, \mbox{as} \,\, |t| \to 0. \eqno{(h_5)}
$$
$$
F(x,t)/|t|^{2} \to +\infty \,\,\, \mbox{uniformly in } \,\, x \,\, \mbox{as} \,\, |t| \to +\infty, \eqno{(h_6)}
$$
and
$$
t\mapsto f(x,t)/|t| \,\,\, \mbox{is strictly increasing on} \,\,\, \mathbb{R} \setminus \{0\}. \eqno{(h_7)}
$$
The same approach has been used by Zhang, Xu and Zhang \cite{ZXZ, ZXZ2} to study a class of indefinite and asymptotically periodic problem.   

In \cite{AG}, Alves and Germano have studied the existence of ground state solution for problem $(P_1)$ by supposing the $f$ has a critical growth for $N \geq 2,$ while in \cite{AG2} the authors have established the existence and concentration of solution for problem $(P)_\epsilon$ by supposing that $f$ has a subcritical growth and $V,A$ verify the conditions  $(V)$ and $(A)$ respectively.

Motivated by results found \cite{AG,AG2}, in the present paper we intend to study the existence and concentration of solution for problem $(P)_\epsilon$ for the case where function $f$ has a critical growth. Since the critical growth brings a lost of compactness, we have established new estimates for the problem. Here, the concentration phenomena is very subtle, because we need to be careful to prove some estimates involving the $L^{\infty}$ norm of the solutions for $\epsilon$ small enough, for more details see Section 2.2 for $N \geq 3$,  and Section 3.3 for $N=2$. In additional to conditions $(V)$ and $(A)$ on the functions $V$ and $A$ respectively, we are supposing the following conditions on $f$: 

\vspace{0.5 cm}

\noindent\textbf{The Case $N\geq3$:}\\

In this case $f:\R\rightarrow\R$ is of the form 
$$
f(t)=\xi |t|^{q-1}t+|t|^{2^{*}-2}t, \quad \forall t \in \R; \leqno{(f_0)}
$$
with $\xi>0, q \in (2,2^*)$ and $2^*={2N}/{N-2}$.

\vspace{0.5 cm}

\noindent\textbf{The Case $N=2$:}\\

In this case $f:\R\rightarrow\R$ is a continuous function that satisfies
\begin{itemize}
	\item[$(f_1)$] $\frac{f(t)}{t}\rightarrow0$ as $t\rightarrow0$ ;
	\item[$(f_2)$] The function $t\mapsto\frac{f(t)}{t}$ is increasing on $(0,+\infty)$ and decreasing on $(-\infty,0)$;
	\item[$(f_3)$] There exists $\theta>2$ such that
	$$
	0<\theta F(t)\leq f(t)t, \quad \forall t \in \R \setminus \{0\}
	$$
	where
	$$
	F(t):=\int_{0}^{t}f(s)ds;
	$$
	\item[$(f_4)$] There exists $\Gamma>0$ such that $|f(t)|\leq\Gamma e^{4\pi t^{2}}$ for all $t \in \R$;
	\item[$(f_5)$] There exist $\tau>0$ and $q>2$  such that $F(t)\geq\tau|t|^{q}$ for all $t \in \R$.
\end{itemize}
The condition $(f_4)$ says that $f$ can have an  exponential critical growth. Here, we recall that a function $f$ has an exponential critical growth, if there is $\alpha_0>0$ such that   
$$
\lim_{|t| \to +\infty}\frac{|f(t)|}{e^{\alpha|t|^{2}}}=0, \;\; \forall \alpha > \alpha_0, 
\lim_{|t| \to +\infty}\frac{|f(t)|}{e^{\alpha|t|^{2}}}=+\infty, \;\; \forall \alpha < \alpha_0.
$$

\vspace{0.5 cm}

Our main theorem is the following 
\begin{theorem}  \label{T1}
	Assume $(V), (A)$, $(f_0)$ for $N\geq 3$, $(f_1)-(f_5)$ for $N=2$. Then,
	there exist  $\tau_0, \xi_0, \epsilon_{0}> 0$ such that $(P)_{\epsilon}$ has a ground state solution $u_\epsilon$ for all $\epsilon \in (0,\epsilon_0)$, with $\xi \geq \xi_0$ if $N=3$ and $\tau \geq \tau_0$ if $N=2$. Moreover, if $x_{\epsilon} \in \mathbb{R}^{N}$ denotes a global maximum point of $|u_{\epsilon}|$, then
	\[
	\lim_{\epsilon \rightarrow 0}A(\epsilon x_{\epsilon}) = \sup_{x \in \mathbb{R}^N}A(x).
	\]
\end{theorem}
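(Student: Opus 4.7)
The natural approach is to combine the generalized Nehari manifold method of Pankov and Szulkin--Weth with a concentration--compactness analysis, adapting the critical-growth treatment already carried out for the autonomous case in \cite{AG}. Introduce the energy functional
\begin{equation*}
J_\epsilon(u) = \frac{1}{2}\|u^+\|^2 - \frac{1}{2}\|u^-\|^2 - \int_{\R^N} A(\epsilon x)\, F(u)\, dx
\end{equation*}
and the set
\begin{equation*}
\mathcal{M}_\epsilon = \{u \in H^1(\R^N)\setminus E^- : J'_\epsilon(u)u = 0,\ J'_\epsilon(u)v = 0 \ \forall v \in E^-\},
\end{equation*}
with level $c_\epsilon = \inf_{\mathcal{M}_\epsilon} J_\epsilon$. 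For each constant $a>0$ the autonomous problem $-\Delta u + V(x)u = a f(u)$ admits, by \cite{AG} and provided $\xi \geq \xi_0$ (resp.\ $\tau \geq \tau_0$ when $N=2$), a ground state with level $c_a$. A standard scaling argument shows that $a\mapsto c_a$ is strictly decreasing, so $(A)$ yields the key ordering $c_{A(0)} < c_{A_\infty}$.

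The existence part then goes in three steps. First, using a ground state $u_{A(0)}$ of the autonomous problem at $a=A(0)$, translated to concentrate near the maximum of $A$ and cut off appropriately, I would build a test path in $\mathcal{M}_\epsilon$ showing $\limsup_{\epsilon\to 0^+} c_\epsilon \leq c_{A(0)}$. Second --- and this is the main technical obstacle --- I would verify that $c_\epsilon$ lies strictly below the critical threshold at which Palais--Smale compactness fails, namely $\tfrac{1}{N} S^{N/2} A(0)^{-(N-2)/2}$ for $N\geq 3$ (where $S$ is the best Sobolev constant) and an analogous Moser--Trudinger level for $N=2$. For $N\geq 3$ this is obtained by testing on Talenti bubbles $U_\mu$ centred at the maximum of $A$, whose energy drop relative to the threshold is controlled precisely by the subcritical contribution $\xi|t|^{q-1}t$, which forces the hypothesis $\xi\geq \xi_0$; for $N=2$ the analogous test is a Moser sequence and the hypothesis $\tau \geq \tau_0$ plays the same role. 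Third, below this threshold any Cerami sequence at level $c_\epsilon$ admits, after a suitable translation that preserves the $\Z^N$-periodic part of the functional, a nontrivial weak limit which is a critical point of $J_\epsilon$ realizing $c_\epsilon$, yielding the ground state $u_\epsilon$.

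For the concentration statement, let $x_\epsilon$ be a global maximum point of $|u_\epsilon|$ and set $v_\epsilon(x) := u_\epsilon(x+x_\epsilon)$. The other subtle step of the proof is a uniform lower bound $\|u_\epsilon\|_\infty \geq \delta > 0$; in the critical regime this requires a Moser iteration carefully tailored to $f$, as announced in Sections~2.2 and~3.3 of the introduction. Given this, $(v_\epsilon)$ is bounded in $H^1(\R^N)$ and, up to a subsequence, $\epsilon x_\epsilon \to x^\ast$; translating $x_\epsilon$ to the nearest lattice point if necessary in order to exploit the $\Z^N$-periodicity of $V$, one extracts a nontrivial weak limit $v$ solving $-\Delta v + V(x)v = A(x^\ast) f(v)$, so that the autonomous energy $J_{A(x^\ast)}(v) \geq c_{A(x^\ast)}$. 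Since $J_\epsilon(u_\epsilon) = c_\epsilon \to c_{A(0)}$, Fatou's lemma applied to the non-autonomous term delivers $c_{A(x^\ast)} \leq c_{A(0)}$. The strict monotonicity of $a\mapsto c_a$ then forces $A(x^\ast) = A(0) = \sup A$, which is exactly the claimed limit $\lim_{\epsilon\to 0} A(\epsilon x_\epsilon) = \sup_{\R^N} A$.
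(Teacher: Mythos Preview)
Your existence outline is essentially the paper's: generalized Nehari set $\mathcal{M}_\epsilon$, comparison with the autonomous levels $d_\lambda$, and the strict inequalities $c_\epsilon<d_{A_\infty}$ and $c_\epsilon<\tfrac{1}{N}S^{N/2}A(0)^{-(N-2)/2}$ for small $\epsilon$. One simplification: the paper does not redo the Talenti/Moser test for $(P)_\epsilon$. It quotes from \cite{AG} that the autonomous level $d_{A(0)}$ already lies below the threshold (this is where $\xi\ge\xi_0$, $\tau\ge\tau_0$ enter), proves $c_\epsilon\to c_0=d_{A(0)}$, and reads off both strict inequalities for small $\epsilon$.

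In the concentration part your logic is inverted in two places, and as written it has a gap.

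\emph{First}, Moser iteration is not what gives the lower bound $\|u_\epsilon\|_\infty\ge\delta$; that follows softly from $c_\epsilon\ge c_0>0$ together with $f(t)=o(t)$ near $0$. What Moser iteration is actually used for is the \emph{uniform decay at infinity} of the translated solutions, i.e.\ an estimate $|v_n|_{L^\infty(B_R(x))}\le C\,|v_n|_{L^{2^*}(B_{2R}(x))}$ uniform in $n$ and $x$ (Lemma~2.13 in the paper). This only becomes useful once one knows $v_n\to v$ strongly in $L^{2^*}$, which in turn is obtained from an $L^1$-domination argument (Proposition~2.9) that hinges on the energy identity $\lim_n c_{\epsilon_n}=c_0=d_{A(0)}$ and on $v$ being a ground state of the $A(0)$-problem.

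\emph{Second}, you translate directly by the maximum point $x_\epsilon$ and then assert ``up to a subsequence, $\epsilon x_\epsilon\to x^\ast$''. But boundedness of $\epsilon x_\epsilon$ is precisely the conclusion to be proved; it is not available a priori, and the $L^\infty$ lower bound at $x_\epsilon$ does not by itself guarantee a nontrivial $H^1$ weak limit after translation. The paper avoids this circularity by separating two translations. It first uses Lions' lemma to find lattice points $y_n\in\Z^N$ with $\int_{B_r(y_n)}|u_n|^{2^*}\ge\eta$, sets $v_n(\cdot)=u_n(\cdot+y_n)$, and uses concentration--compactness (Lions' second lemma) together with $c_\epsilon<\tfrac{1}{N}S^{N/2}A(0)^{-(N-2)/2}$ to rule out $v=0$. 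An energy comparison with $d_{A_\infty}$ then forces $(\epsilon_n y_n)$ bounded and $\epsilon_n y_n\to z\in\mathcal{A}$. Only \emph{after} this, the chain ``$v$ is an $A(0)$-ground state $\Rightarrow$ $L^1$-domination $\Rightarrow$ $v_n\to v$ in $L^{2^*}$ $\Rightarrow$ Moser $\Rightarrow$ uniform decay'' shows that the max points $z_n$ of $v_n$ stay bounded, whence $\epsilon_n x_n=\epsilon_n z_n+\epsilon_n y_n\to z$ and $A(\epsilon_n x_n)\to A(0)$.

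So your Fatou/monotonicity endgame is right, but the route to ``$\epsilon x_\epsilon$ bounded'' needs the two-step translation and the strong-convergence/domination argument that you have not included.
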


In the proof of Theorem \ref{T1}, we will use variational methods to get a critical point for the energy function $I_{\epsilon}:H^{1}(\R^{N})\rightarrow\R$ given by 
$$
I_{\epsilon}(u)=\frac{1}{2}B(u,u)-\int_{\R^{N}} A(\epsilon x)F(u)dx,
$$
where $B:H^{1}(\R^N) \times H^{1}(\R^N) \to \R$ is the bilinear form
\begin{equation}\label{bilinear form}
B(u,v)=\int_{\R^N} (\nabla u\nabla v+V(x)uv)\, dx, \quad \forall u,v \in H^{1}(\R^N).
\end{equation}
It is well known that $I_{\epsilon}\in C^{1}(H^{1}(\R^{N}),\R)$ with
$$
I_{\epsilon}'(u)v=B(u,v)-\int_{\R^{N}} A(\epsilon x)f(u)vdx, \quad \forall u,v \in H^{1}(\mathbb{R}^N).
$$
Consequently, critical points of $I_{\epsilon}$ are precisely the weak solutions of $(P)_{\epsilon}$.

Note that the bilinear form $B$ is not positive definite, therefore it does not  induce a norm. As in \cite{SW}, there is an inner product $\langle \;\; , \;\; \rangle $  in $H^{1}(\mathbb{R}^N)$ such that 
\begin{equation} \label{PHIW}
I_\epsilon(u)=\frac{1}{2}\|u^+\|^{2}-\frac{1}{2}\|u^-\|^{2}-\int_{\R^N} A(\epsilon x)F(u)\,dx,
\end{equation}
where $\|u\|=\sqrt{\langle u, u \rangle}$ and $H^{1}(\mathbb{R}^N) = E^{+} \oplus E^-$ corresponds to the spectral decomposition of $- \Delta + V $ with respect to the
positive and negative part of the spectrum with $u = u^{+}+u^{-}$, where $u^{+} \in E^{+}$ and $u^{-} \in E^{-}$. It is well known that $B$ is positive definite on $E^{+}$, $B$ is negative definite on $E^{-}$ and the norm $\|\,\,\,\|$ is an equivalent norm to the usual norm in $H^{1}(\R^N)$, that is, there are $a,b >0 $ such that
\begin{equation} \label{equivalente}
b||u|| \leq ||u||_{H^{1}(\R^{N})}\leq a||u||,\ \ \forall\ u\in H^{1}(\R^{N}).
\end{equation}

From now on, for each $u \in H^{1}(\R^N)$,  $\hat{E}(u)$ designates  the set
\begin{equation} \label{conjuntoE}
\hat{E}(u)=E^{-}\oplus [0,+\infty)u.
\end{equation}

The plan of the paper is as follows: In Section 2 we will study the existence and concentration of solution for $N \geq 3$, while in Section 3 we will focus our attention to dimension  $N=2$.  

\vspace{0.5 cm}

\noindent \textbf{Notation:} In this paper, we use the following
notations:
\begin{itemize}
	\item  The usual norms in $H^{1}(\R^N)$ and $L^{p}(\R^N)$ will be denoted by
	$\|\;\;\;\|_{H^{1}(\R^N)}$ and $|\;\;\;|_{p}$ respectively. 	
	
	\item   $C$ denotes (possible different) any positive constant.
	
	\item   $B_{R}(z)$ denotes the open ball with center  $z$ and
	radius $R$ in $\mathbb{R}^N$.
	
	\item We say that $u_n \to u$ in $L_{loc}^{p}(\mathbb{R}^N)$  when
	$$
	u_n \to u \quad \mbox{in} \quad L^{p}(B_R(0)), \quad \forall R>0.
	$$
	\item  If $g$ is a mensurable function, the integral $\int_{\mathbb{R}^N}g(x)\,dx$ will be denoted by $\int g(x)\,dx$.	
	\item We denote $\delta_{x}$ the Dirac measure.
	\item If $\varphi\in C^{\infty}_{c}(\R^{N})$, the set $\overline{\{x\in\R^{N}\ ;\ \varphi(x)\neq0\}}$ will be denoted by $supp\varphi$.
\end{itemize}

\section{The case $N\geq 3$.}

We begin this section by studying the case where $A$ is a constant function. More precisely, we consider the following autonomous problem 
$$
\left\{\begin{array}{l}
-\Delta u+V(x)u=\lambda f(u),\quad x \in \R^{N}, \\ 
u\in H^{1}(\R^{N}),
\end{array}\right.\eqno{(AP)_{\lambda}}
$$
with $\lambda \in [A_0, +\infty)$ and $f:\R\rightarrow\R$ being of the form
$$
f(t)=\xi|t|^{q-1}t+|t|^{2^{*}-2}t \quad \forall t \in \R;
$$ 
with $\xi>0, q \in (2,2^*)$ and $2^*={2N}/{N-2}$.

Associated with $(AP)_{\lambda}$, we have the energy functional $J_{\lambda}:H^{1}(\R^{N})\rightarrow\R$ given by
$$
J_\lambda(u)=\frac{1}{2}\int (|\nabla u|^{2}+V(x)|u|^{2})\,dx-\lambda \int F(u)\,dx, 
$$
or equivalently   
$$
J_\lambda(u)=\frac{1}{2}\|u^+\|^{2}-\frac{1}{2}\|u^-\|^{2} -\lambda \int F(u)\,dx.
$$
In what follows, let us denote by $d_\lambda$ the real number defined by
\begin{equation} \label{dlambda}
d_\lambda=\inf_{u \in \mathcal{N}_{\lambda}}J_\lambda(u);
\end{equation}
where
\begin{equation} \label{Nlambda}
\mathcal{N}_{\lambda}=\left\{u\in H^{1}(\R^{N})\setminus E^{-}\ ;\ J_{\lambda}'(u)u=0\text{ and }J_{\lambda}'(u)v=0,\forall\ v\in E^{-}\right\}. 
\end{equation}

In \cite{AG}, Alves and Germano have  proved that for each $\lambda\in[A_{0},+\infty)$, the problem $(AP)_\lambda$ possesses a ground state solution $u_\lambda \in H^{1}(\R^N)$, that is, 
$$
u_\lambda \in \mathcal{N}_{\lambda}, \quad J_\lambda(u_\lambda)=d_\lambda \quad \mbox{and} \quad J_{\lambda}'(u)=0.
$$
A key point to prove the existence of the ground state $u_\lambda$ are the following informations involving $d_\lambda$:
\begin{equation} \label{positivo}
0<d_\lambda=\inf_{u\in E^{+}\setminus\{0\}}\max_{v\in\widehat{E}(u)}J_{\lambda}(u)
\end{equation}
and
\begin{equation}\label{limitacaod4}
d_{\lambda}<\frac{1}{N}\frac{S^{N/2}}{\lambda ^{\frac{N-2}{2}}}, \quad \forall \lambda \geq A_0.
\end{equation}
Here, we would like to point out that (\ref{limitacaod4}) holds for $N=3$ if $\xi$ is large enough, while for $N \geq 4$ there is no restriction on $\xi$. This fact justifies why $\xi$ must be large for $N=3$ in  Theorem \ref{T1}.

An interesting and important fact is that for each $u\in H^{1}(\R^{N})\setminus E^{-}$, $\mathcal{N}_\lambda\cap\hat{E}(u)$ is a singleton set and the element of this set is the unique global maximum of $J_{\lambda}|_{\hat{E}(u)}$, that is, there are $t^* \geq 0$ and $v^* \in E^{-}$ such that
\begin{equation} \label{maximo}
J_{\lambda}(t^*u+v^*)=\displaystyle\max_{w \in \widehat{E}(u)}J_{\lambda}(w).
\end{equation}

After the above commentaries we are ready to prove an important result involving the function $\lambda\mapsto d_{\lambda}$.

\begin{proposition}\label{continuidadec}  The function $\lambda\mapsto d_{\lambda}$ is decreasing and continuous on $[A_0,+\infty)$.
\end{proposition}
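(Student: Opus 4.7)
The proof splits into strict monotonicity and continuity. For monotonicity at $A_{0}\leq\lambda_{1}<\lambda_{2}$, I take a ground state $u_{1}\in\mathcal{N}_{\lambda_{1}}$ furnished by \cite{AG} with $J_{\lambda_{1}}(u_{1})=d_{\lambda_{1}}$, apply (\ref{maximo}) to obtain the unique maximizer $\tilde{u}\in\mathcal{N}_{\lambda_{2}}\cap\hat{E}(u_{1}^{+})$ of $J_{\lambda_{2}}$ on $\hat{E}(u_{1}^{+})$, and exploit both that $F(t)=\frac{\xi}{q+1}|t|^{q+1}+\frac{1}{2^{*}}|t|^{2^{*}}>0$ for $t\neq 0$ and that $u_{1}$ itself maximizes $J_{\lambda_{1}}$ on $\hat{E}(u_{1}^{+})$:
$$
d_{\lambda_{2}}\leq J_{\lambda_{2}}(\tilde{u})=J_{\lambda_{1}}(\tilde{u})-(\lambda_{2}-\lambda_{1})\int F(\tilde{u})\,dx\leq d_{\lambda_{1}}-(\lambda_{2}-\lambda_{1})\int F(\tilde{u})\,dx<d_{\lambda_{1}}.
$$

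For the upper semicontinuity $\limsup_{\mu\to\lambda}d_{\mu}\leq d_{\lambda}$ at any interior $\lambda>A_{0}$, I proceed via test functions: given $\epsilon>0$, pick $u\in E^{+}\setminus\{0\}$ with $g_{\lambda}(u):=\max_{w\in\hat{E}(u)}J_{\lambda}(w)<d_{\lambda}+\epsilon$. For each fixed $w=tu+v$ the map $\mu\mapsto J_{\mu}(w)$ is affine, so $\mu\mapsto g_{\mu}(u)$ is a supremum of affine functions, hence convex and therefore continuous at $\lambda$. Thus $d_{\mu}\leq g_{\mu}(u)\to g_{\lambda}(u)<d_{\lambda}+\epsilon$ as $\mu\to\lambda$. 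At the boundary $\lambda=A_{0}$ the corresponding upper bound is free from monotonicity, while for $\mu\uparrow\lambda$ interior the lower bound $\liminf d_{\mu}\geq d_{\lambda}$ is also immediate from monotonicity.

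The delicate direction is $\liminf_{\mu\downarrow\lambda}d_{\mu}\geq d_{\lambda}$. For each $\mu>\lambda$ I take a ground state $u_{\mu}$ of $J_{\mu}$ and let $\tilde{u}_{\mu}\in\mathcal{N}_{\lambda}\cap\hat{E}(u_{\mu}^{+})$ be the maximizer of $J_{\lambda}$ on $\hat{E}(u_{\mu}^{+})$. As in the monotonicity argument,
$$
d_{\lambda}\leq J_{\lambda}(\tilde{u}_{\mu})=J_{\mu}(\tilde{u}_{\mu})+(\mu-\lambda)\int F(\tilde{u}_{\mu})\,dx\leq d_{\mu}+(\mu-\lambda)\int F(\tilde{u}_{\mu})\,dx.
$$
The main obstacle is bounding $\int F(\tilde{u}_{\mu})$ uniformly, since no compactness on $\tilde{u}_{\mu}$ is available. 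I would resolve this through the Nehari identity
$$
J_{\lambda}(\tilde{u}_{\mu})=J_{\lambda}(\tilde{u}_{\mu})-\tfrac{1}{2}J_{\lambda}'(\tilde{u}_{\mu})\tilde{u}_{\mu}=\lambda\int\left[\tfrac{1}{2}f(\tilde{u}_{\mu})\tilde{u}_{\mu}-F(\tilde{u}_{\mu})\right]dx
$$
combined with the elementary pointwise estimate $F(t)\leq C(q,N)\bigl[\tfrac{1}{2}f(t)t-F(t)\bigr]$ (immediate from $f(t)=\xi|t|^{q-1}t+|t|^{2^{*}-2}t$). This yields the self-improving bound $\int F(\tilde{u}_{\mu})\leq\frac{C}{\lambda}\bigl(d_{\mu}+(\mu-\lambda)\int F(\tilde{u}_{\mu})\bigr)$; for $\mu$ close enough to $\lambda$ the prefactor $1-C(\mu-\lambda)/\lambda$ remains bounded below, giving $\int F(\tilde{u}_{\mu})\leq K$ uniformly and hence $d_{\lambda}-d_{\mu}=O(\mu-\lambda)\to 0$, which closes the argument.
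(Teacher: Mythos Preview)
Your argument is correct and takes a genuinely different, considerably more elementary route than the paper's.

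For the delicate direction $\liminf_{\mu\downarrow\lambda}d_{\mu}\geq d_{\lambda}$, the paper works with a sequence $\lambda_{n}\downarrow\lambda$ and the associated ground states $u_{n}$, and invests substantial effort in showing that the projected maximizers $t_{n}u_{n}+v_{n}\in\mathcal{N}_{\lambda}$ remain bounded. This requires: (i) boundedness of $(u_{n})$ via a contradiction argument; (ii) a Lions--type concentration lemma to locate mass; (iii) the full concentration--compactness principle together with the strict upper bound (\ref{limitacaod4}) to rule out bubbling at the critical exponent and produce a nontrivial weak limit of the translated sequence; and finally (iv) an external lemma from \cite{AG2} asserting $J_{\lambda}\leq 0$ outside large balls, uniformly over the relevant family. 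Only after all this does the paper obtain boundedness of $\int F(t_{n}u_{n}+v_{n})$ and conclude.

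You bypass every one of these steps. Your key observation is that the Nehari identity for $\tilde{u}_{\mu}\in\mathcal{N}_{\lambda}$ gives
\[
J_{\lambda}(\tilde{u}_{\mu})=\lambda\int\left[\tfrac{1}{2}f(\tilde{u}_{\mu})\tilde{u}_{\mu}-F(\tilde{u}_{\mu})\right]dx\ \geq\ \frac{\lambda}{C}\int F(\tilde{u}_{\mu})\,dx,
\]
the last inequality being an elementary pointwise consequence of $(f_{0})$ (and, more generally, of any Ambrosetti--Rabinowitz condition $\theta F\leq tf$ with $\theta>2$, so the same trick would work for the $N=2$ section). Feeding this back into $J_{\lambda}(\tilde{u}_{\mu})\leq d_{\mu}+(\mu-\lambda)\int F(\tilde{u}_{\mu})$ yields the self--improving bound and closes the loop with no compactness input whatsoever, and without invoking (\ref{limitacaod4}). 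Your upper--semicontinuity argument via convexity of $\mu\mapsto\max_{\hat{E}(u)}J_{\mu}$ is likewise clean and more transparent than appealing to the subcritical result in \cite{AG2}.

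What the paper's approach buys is that its machinery (translation, concentration--compactness) is reused verbatim in later sections to prove existence and concentration for $(P)_{\epsilon}$, so in context it is not wasted effort. Your proof, on the other hand, isolates the continuity of $\lambda\mapsto d_{\lambda}$ as a soft fact that needs none of that machinery.
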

\begin{proof}
From \cite[Proposition 2.3]{AG2}, the function $\lambda\mapsto d_{\lambda}$ is decreasing, and if $\lambda_{1}\leq\lambda_{2}\leq\lambda_{3}\leq...\leq\lambda_{n}\rightarrow\lambda$ then $\lim_{n} d_{\lambda_{n}}=d_{\lambda}$. It suffices to check that $\lambda_{1}\geq\lambda_{2}\geq\lambda_{3}\geq...\geq\lambda_{n}\rightarrow\lambda$ implies $\lim_{n} d_{\lambda_{n}}=d_{\lambda}$. Let $u_{n}$ be a ground state solution of $(AP)_{\lambda_{n}}$, $t_{n}>0$ and $v_{n}\in E^{-}$ verifying 
$$
J_{\lambda}(t_{n}u_{n}+v_{n})=\max_{\widehat{E}(u_{n})}J_{\lambda}.
$$
Our goal is to show that $(u_{n})$ is bounded in $H^{1}(\mathbb{R}^N)$. First of all, note that
\begin{equation}\label{limitacaofunun}
\left(\frac{1}{2}-\frac{1}{q}\right)\int f(u_{n})u_{n}dx\leq\int\left(\frac{1}{2}f(u_{n})u_{n}-F(u_{n})\right)dx=
\end{equation}
$$
=\frac{1}{\lambda_{n}}\left(J_{\lambda_{n}}(u_{n})-\frac{1}{2}J_{\lambda_{n}}'(u_{n})u_{n}\right)=\frac{1}{\lambda_{n}} J_{\lambda_{n}}(u_{n})=\frac{1}{\lambda_{n}}d_{\lambda_{n}}\leq\frac{1}{\lambda}d_{\lambda},
$$
which proves the boundedness of $\left(\int f(u_{n})u_{n}dx\right)$. Fixing $g(t)=\chi_{[-1,1]}(t)f(t)$ and $l(t)=\chi_{[-1,1]^{c}}(t)f(t)$, we have that   
$$
g(t)+l(t)=f(t), \quad \forall t \in\R. 
$$ 
From definition of $g$ and $l$, there exists $k>0$ such that
$$
|g(t)|^{r}\leq ktf(t)\text{ and }|l(t)|^{s}\leq ktf(t), \quad \forall t \in \mathbb{R},
$$
where $r:=\frac{q+1}{q}$ and $s:=\frac{2^{*}}{2^{*}-1}$. Thus,
$$
\left|\int f(u_{n})u_{n}^{+}dx\right|\leq\int|g(u_{n})u_{n}^{+}|dx+\int|l(u_{n})u_{n}^{+}|dx\leq
$$
$$\leq \left(\int|g(u_{n})|^{r}dx\right)^{1/r}|u_{n}^{+}|_{q+1}+\left(\int|l(u_{n})|^{s}dx\right)^{1/s}|u_{n}^{+}|_{2^{*}}\leq$$
$$\leq C\left(\int f(u_{n})u_{n}dx\right)^{1/r}||u_{n}^{+}||+C\left(\int f(u_{n})u_{n}dx\right)^{1/s}||u_{n}^{+}||\leq C||u_{n}||.$$
Suppose by contradiction that $||u_{n}||\rightarrow+\infty$. Then
$$
\int\frac{f(u_{n})u_{n}^{+}}{||u_{n}||^{2}}dx\rightarrow 0.
$$
On the other hand, the equality 
$$
0=\frac{J_{\lambda_{n}}'(u_{n})u_{n}^{+}}{||u_{n}||^{2}}=\frac{||u_{n}^{+}||^{2}}{||u_{n}||^{2}}-\lambda_{n}\int\frac{f(u_{n})u_{n}^{+}}{||u_{n}||^{2}}dx
$$
leads to  
$$
\frac{||u_{n}^{+}||^{2}}{||u_{n}||^{2}}\rightarrow0.
$$
As $u_{n}\in\mathcal{N}_{\lambda_{n}}$,  it follows that $\|u_n^{-}\| \leq \|u_n^{+}\|$, and thus, 
$$
1=\frac{||u_{n}^{+}||^{2}}{||u_{n}||^{2}}+\frac{||u_{n}^{-}||^{2}}{||u_{n}||^{2}}\leq 2\frac{||u_{n}^{+}||^{2}}{||u_{n}||^{2}}\rightarrow 0,
$$
a contradiction. This shows the boundedness of $(u_{n})$. We claim that there are $(y_n) \subset \Z^N$ and $r,\eta>0$ such that 
\begin{equation}\label{desigualdadelions}
\int_{B_{r}(y_{n})}|u_{n}|^{2^{*}}dx>\eta, \quad \forall n \in \mathbb{N}.
\end{equation}
Arguing by contradiction, if the inequality does not occur, from \cite[Lemma 2.1]{RWW}, $u_{n}\rightarrow0$ in $L^{p}(\R^{N})$ for all $p\in(2,2^{*}]$, and so, $\int f(u_{n})u_{n}^{+}dx\rightarrow0$. This together with the equality below
$$
0=J_{\lambda_{n}}'(u_{n})u_{n}^{+}=||u_{n}^{+}||^{2}-\lambda_{n}\int f(u_{n})u_{n}^{+}dx.
$$
gives $||u_{n}^{+}||\rightarrow0$, which is a contradiction because $||u_{n}||\geq\sqrt{2d_{\lambda_{n}}}\geq\sqrt{2d_{\lambda_{1}}}$. Thereby (\ref{desigualdadelions}) follows.

Define $\widetilde{u}_{n}(x):=u_{n}(x+y_{n})$. By \cite[Lemma 2.1]{AG2}, $\widetilde{u}_{n}^{+}(x)=u_{n}^{+}(x+y_{n})$ and $(\widetilde{u}_{n})$ is bounded in $H^{1}(\R^{N})$. In the sequel, let us assume that for some subsequence $\widetilde{u}_{n}\rightharpoonup u$ in $H^{1}(\R^N)$. Our goal is to show that $u\neq0$. Inspired by \cite[Lemma 2.17]{AG}, let us suppose by contradiction $u=0$ and
$$
|\nabla\widetilde{u}_{n}|^{2}\rightharpoonup\mu,\ \ |\widetilde{u}_{n}|^{2^{*}}\rightharpoonup\nu \ \text{ in }\M^{+}(\R^{N}).
$$
By Concentration-Compactness Principle due to Lions \cite{lionsII}, there exist a countable set J, $(x_{i})_{i\in\text{J}} \subset \R^{N}$ and $(\mu_{i})_{i\in\text{J}},(\nu_{i})_{i\in\text{J}} \subset [0,+\infty)$ such that
$$\nu=\sum_{i\in\text{J}}\nu_{i}\delta_{x_{i}},\ \mu\geq\sum_{i\in\text{J}}\mu_{i}\delta_{x_{i}},\ \text{ and }\mu_{i}=S\nu_{i}^{2/2^{*}}.$$
We will prove that $\nu_{i}=0$ for all $i\in$J. Suppose there exists $i\in$J such that $\nu_{i}\neq0$. Then,
$$
\begin{array}{ll}
d_{\lambda} & \geq \lim_{n}d_{\lambda_{n}}=\lim_{n}\left(J_{\lambda_{n}}(u_{n})-\frac{1}{2}J_{\lambda_{n}}'(u_{n})u_{n}\right) \\ & \geq\lim_{n}\lambda_{n}\left(\frac{1}{2}-\frac{1}{2^{*}}\right)\int |u_{n}|^{2^{*}}dx \\
& =\lim_{n}\frac{\lambda_{n}}{N}\int |\widetilde{u}_{n}|^{2^{*}}dx=\frac{\lambda}{N}\sum_{j\in\text{J}}\nu_{j},\\
\end{array}
$$
which means 
\begin{equation}\label{concentracaolionsi}
d_{\lambda}\geq\frac{\lambda}{N}\sum_{j\in\text{J}}\nu_{j}.
\end{equation}
Let $\varphi_{\delta}(x):=\varphi\left(\frac{x-x_{i}}{\delta}\right)$ for all $x\in\R^{N}$ and $\delta >0$, where $\varphi\in C^{\infty}_{c}(\R^{N})$ is such that $\varphi\equiv1$ on $B_{1}(0)$, $\varphi\equiv0$ on $\R^{N}\setminus B_{2}(0)$, $0\leq\varphi\leq1$ and $|\nabla\varphi|\leq2$. Consequently $(\varphi_{\delta}\widetilde{u}_{n})$ is bounded in $H^{1}(\R^{N})$ and
$$
J_{\lambda_{n}}'(\widetilde{u}_{n})(\varphi_{\delta}\widetilde{u}_{n})=0,
$$
that is, 
$$
\int\nabla\widetilde{u}_{n}\nabla(\varphi_{\delta}\widetilde{u}_{n})dx+\int V(x)\varphi_{\delta}\widetilde{u}_{n}^{2}dx=\lambda_{n}\xi\int |\widetilde{u}_{n}|^{q+1}\varphi_{\delta}dx+\lambda_{n}\int |\widetilde{u}_{n}|^{2^{*}}\varphi_{\delta}dx.
$$
Passing to the limit as $n\rightarrow+\infty$,
$$
\int\varphi_{\delta}d\mu=\lambda\int \varphi_{\delta}d\nu.
$$
Now, taking the limit $\delta \rightarrow 0$, 
$$
\mu(x_{i})=\lambda\nu_{i}.
$$
From the fact that $\mu(x_{i})\geq\mu_{i}$, we derive 
$$
S\nu_{i}^{2/2^{*}}=\mu_{i}\leq\mu(x_{i})=\lambda \nu_{i},
$$
and so 
$$
S^{N/2}\leq \lambda^{N/2}\nu_{i}.
$$
Consequently, 
\begin{equation}\label{concentracaolionsii}
\frac{\lambda}{N}\nu_{i}\geq\frac{1}{N}\frac{S^{N/2}}{\lambda^{\frac{N-2}{2}}}.
\end{equation}
From (\ref{concentracaolionsi}) and (\ref{concentracaolionsii}),  
$$
d_{\lambda}\geq\frac{1}{N}\frac{S^{N/2}}{\lambda^{\frac{N-2}{2}}},
$$
contrary to (\ref{limitacaod4}). From this,  $\nu_{i}=0$ for all $i\in$J and $\widetilde{u}_{n}\rightarrow 0$ in $L^{2^{*}}_{loc}(\R^{N})$, which contradicts (\ref{desigualdadelions}). This permit us to conclude that $u\neq 0$.

\begin{claim} \label{C1}
If $u^{+}=0$, then $u^{-}=0$.
\end{claim}
In fact, if $u^{+}=0$, 
$$
\int f(u)u^{-}dx=\int f(u)u^{+}dx+\int f(u)u^{-}dx=\int f(u)udx\geq0.
$$
On the other hand, letting $n\rightarrow+\infty$ in the equality below 
$$
0=J_{\lambda_{n}}(\widetilde{u}_{n})u^{-}=B(\widetilde{u}_{n},u^{-})-\lambda_{n}\int f(\widetilde{u}_{n})u^{-}dx
$$
we find
$$
-||u^{-}||^{2}=B(u,u^{-})=\lambda\int f(u)u^{-}dx\geq 0,
$$
thereby showing that $u^{-}=0$. 

The Claim \ref{C1} implies that $u^{+}\neq0$, because $u\neq0$ and $u=u^{+}+u^{-}$. Define ${\cal V}:=\{\widetilde{u}_{n}^{+}\}_{n\in\N}$. Since $\widetilde{u}_{n}^{+}\rightharpoonup u^{+}\neq0$, then $0\notin\overline{{\cal V}}^{\sigma(H^{1}(\R^{N}),H^{1}(\R^{N})')}$ and ${\cal V}$ is bounded in $H^{1}(\R^{N})$. Applying \cite[Lemma 2.2]{AG2}, there exists $R>0$ such that
\begin{equation}\label{desigualdadejl}
J_{\lambda}\leq0\text{ on }\widehat{E}(u)\setminus B_{R}(0),\ \text{ for all }u\in {\cal V}.
\end{equation}
Setting $\widetilde{v}_{n}(x):=v_{n}(x+y_{n})$,  
\begin{equation}\label{desigualdadejl2}
J_{\lambda}(t_{n}\widetilde{u}_{n}+\widetilde{v}_{n})=J_{\lambda}(t_{n}u_{n}+v_{n})\geq d_{\lambda}>0.
\end{equation}
By (\ref{desigualdadejl}) and (\ref{desigualdadejl2}), $||t_{n}\widetilde{u}_{n}+\widetilde{v}_{n}||\leq R$ for all $n\in\N$. As  $||t_{n}u_{n}+v_{n}||=||t_{n}\widetilde{u}_{n}+\widetilde{v}_{n}||$, $(t_{n}u_{n}+v_{n})$ is also bounded in $H^{1}(\R^{N})$ and 
$$
\begin{array}{ll}
d_{\lambda} & \leq J_{\lambda}(t_{n}u_{n}+v_{n})=(\lambda_{n}-\lambda)\int F(t_{n}u_{n}+v_{n})dx+J_{\lambda_{n}}(t_{n}u_{n}+v_{n})\leq \\
& \leq o_{n}+J_{\lambda_{n}}(u_{n})=o_{n}+d_{\lambda_{n}}\leq o_{n}+d_{\lambda},
\end{array}
$$
from where it follows that $\lim_{n} d_{\lambda_{n}}=d_{\lambda}$.
\end{proof}

\subsection{Existence of ground state for problem $(P)_\epsilon$.}

In the sequel, we fix
$$
\M_{\epsilon}:=\{u\in H^{1}(\R^{N})\setminus E^{-}\ ;\ I_{\epsilon}'(u)u=I_{\epsilon}'(u)v=0,\text{ for all }v\in E^{-}\}
$$
and 
$$
c_{\epsilon}=\inf_{\M_{\epsilon}}I_{\epsilon}.
$$
By using the same arguments found in \cite{AG}, it follows that $c_{\epsilon}>0$, and for each $u\in H^{1}(\R^{N})\setminus E^{-}$, there exist $t\geq0$ and $v\in E^{-}$ verifying
$$
I_{\epsilon}(tu+v)=\max_{\widehat{E}(u)}I_{\epsilon}\ \text{ and }\ \{tu+v\}=\M_{\epsilon}\cap\widehat{E}(u).
$$
The same idea of \cite[Lemma 2.6]{AG} proves that 
\begin{equation}\label{umaislimitadainferiormente}
||u^{+}||^{2}\geq2c_{\epsilon},\ \ \text{ for all }u\in\M_{\epsilon} \quad \mbox{and} \quad \epsilon>0.
\end{equation}

In what follows, without loss of generality we assume that
$$
A(0)=\max_{x \in \R^N}A(x).
$$

Our first result in this section establishes an important relation involving the levels $c_\epsilon$ and $c_0$. 
\begin{lemma} \label{Lema1}
The limit $\lim_{\epsilon\rightarrow0}c_{\epsilon}=c_{0}$ holds. Moreover, let  $w_{0}$ be a ground state solution of the problem $(P)_{0}$, $t_{\epsilon}\geq0$ and $v_{\epsilon}\in E^{-}$ such that $t_{\epsilon}w_{0}+v_{\epsilon}\in\M_{\epsilon}$. Then
$$
t_{\epsilon}\rightarrow1\ \text{ and }\ v_{\epsilon}\rightarrow 0 \quad \mbox{as} \quad \epsilon\rightarrow 0.
$$ 
\end{lemma}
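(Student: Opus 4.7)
The plan is to establish $c_\epsilon \to c_0$ and $(t_\epsilon, v_\epsilon) \to (1,0)$ simultaneously by exploiting the identity
\[
I_\epsilon(u) = J_{A(0)}(u) + \int (A(0) - A(\epsilon x)) F(u)\,dx,
\]
valid on $H^1(\R^N)$, which together with $F \geq 0$ and $A(\epsilon x) \leq A(0)$ yields the pointwise sandwich $J_{A(0)} \leq I_\epsilon \leq J_{A_0}$.

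The lower bound $c_\epsilon \geq c_0$ is straightforward: given $u \in \M_\epsilon$, let $\bar u$ be the unique element of $\mathcal{N}_{A(0)} \cap \widehat E(u)$. Since $u$ maximizes $I_\epsilon$ on $\widehat E(u)$,
\[
I_\epsilon(u) \geq I_\epsilon(\bar u) \geq J_{A(0)}(\bar u) \geq d_{A(0)} = c_0,
\]
and taking the infimum over $\M_\epsilon$ gives $c_\epsilon \geq c_0$.

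For the matching upper bound, setting $\tilde w_\epsilon := t_\epsilon w_0 + v_\epsilon$ I exploit
\[
c_\epsilon \leq I_\epsilon(\tilde w_\epsilon) = J_{A(0)}(\tilde w_\epsilon) + \int (A(0)-A(\epsilon x))F(\tilde w_\epsilon)\,dx \leq c_0 + \int (A(0)-A(\epsilon x))F(\tilde w_\epsilon)\,dx,
\]
where the final step uses $\max_{\widehat E(w_0)} J_{A(0)} = J_{A(0)}(w_0) = c_0$ (from $\mathcal{N}_{A(0)}\cap\widehat E(w_0)=\{w_0\}$). Everything reduces to showing the remainder vanishes. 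Boundedness of $(\tilde w_\epsilon)$ in $H^1(\R^N)$ comes from: the envelope $I_\epsilon(\tilde w_\epsilon) \leq \max_{\widehat E(w_0)} J_{A_0} =: M < \infty$ together with the Nehari-type identity
\[
I_\epsilon(\tilde w_\epsilon) = I_\epsilon(\tilde w_\epsilon) - \tfrac{1}{2}I_\epsilon'(\tilde w_\epsilon)\tilde w_\epsilon = \int A(\epsilon x)\left(\xi \tfrac{q-1}{2(q+1)}|\tilde w_\epsilon|^{q+1} + \tfrac{1}{N}|\tilde w_\epsilon|^{2^*}\right)dx
\]
and $A(\epsilon x)\geq A_0>0$ bound $|\tilde w_\epsilon|_{q+1}$ and $|\tilde w_\epsilon|_{2^*}$; testing the equations $I_\epsilon'(\tilde w_\epsilon)\tilde w_\epsilon^\pm = 0$ with Sobolev then controls $\|\tilde w_\epsilon^\pm\|$, and since $\tilde w_\epsilon^+ = t_\epsilon w_0^+$ with $w_0^+ \neq 0$ this forces $t_\epsilon$ bounded and hence $v_\epsilon = \tilde w_\epsilon^- - t_\epsilon w_0^-$ bounded in $E^-$.

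Extract $t_\epsilon \to t_0$, $v_\epsilon \rightharpoonup v_0$ in $E^-$, and set $\tilde w_0 := t_0 w_0 + v_0 \in \widehat E(w_0)$. Passing to the limit in $I_\epsilon'(\tilde w_\epsilon)\varphi = 0$ (for $\varphi \in E^-$ and for $\varphi = \tilde w_\epsilon$) via the Lions concentration-compactness analysis carried out in Proposition \ref{continuidadec}, I obtain $\tilde w_0 \in \mathcal{N}_{A(0)} \cup \{0\}$: the crucial point is that the strict inequality (\ref{limitacaod4}) (whose validity when $N=3$ demands $\xi$ large, as noted in the excerpt) excludes every atom in the limiting measure $|\tilde w_\epsilon|^{2^*} \rightharpoonup \nu$, restoring strong $L^{2^*}$-convergence. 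The uniform lower bound $\|\tilde w_\epsilon^+\|^2 = t_\epsilon^2\|w_0^+\|^2 \geq 2c_\epsilon \geq 2c_0 > 0$ coming from (\ref{umaislimitadainferiormente}) forces $t_0 > 0$, so $\tilde w_0 \in \mathcal{N}_{A(0)} \cap \widehat E(w_0) = \{w_0\}$, i.e.\ $t_0 = 1$ and $v_0 = 0$. Uniqueness of the limit extends the convergence to the full family; strong $L^p$ convergence ($p\in[2,2^*]$) then yields $\int(A(0)-A(\epsilon x))F(\tilde w_\epsilon)\,dx \to 0$, and strong $H^1$ convergence of $\tilde w_\epsilon$ to $w_0$, extracted from $I_\epsilon'(\tilde w_\epsilon)\tilde w_\epsilon^\pm = 0$ after the nonlinear terms have been controlled, upgrades $v_\epsilon \rightharpoonup 0$ to $v_\epsilon \to 0$. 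The main obstacle is precisely the critical growth in the nonlinear term; the only available device to defeat it is the sub-threshold bound (\ref{limitacaod4}) inherited from \cite{AG}, which rules out the bubbling that would otherwise prevent the identification $\tilde w_0 = w_0$.
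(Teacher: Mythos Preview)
Your lower bound $c_\epsilon\ge c_0$ and the boundedness of $(t_\epsilon,v_\epsilon)$ are fine. The gap is in the step where you invoke the Lions concentration--compactness analysis ``carried out in Proposition~\ref{continuidadec}'' to rule out atoms of $|\tilde w_\epsilon|^{2^*}\rightharpoonup\nu$ and thereby force strong $L^{2^*}$-convergence. That argument in Proposition~\ref{continuidadec} works because the sequence $\tilde u_n$ there consists of \emph{full critical points} of $J_{\lambda_n}$, so one may test against the localized functions $\varphi_\delta\,\tilde u_n$ and obtain the identity $\mu(\{x_i\})=\lambda\nu_i$ by letting $n\to\infty$ and $\delta\to 0$. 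Your $\tilde w_\epsilon$ lies only in $\M_\epsilon$: the relations $I_\epsilon'(\tilde w_\epsilon)\varphi=0$ are available solely for $\varphi\in E^-$ and $\varphi=\tilde w_\epsilon$, and the cut-off function $\varphi_\delta\,\tilde w_\epsilon$ belongs to neither class. Without that localized testing there is no mechanism to link $\mu_i$ with $\nu_i$, hence no way to invoke (\ref{limitacaod4}) to exclude bubbling. The strong $L^{2^*}$-convergence you need is therefore unproved, and without it the subsequent limit in $I_\epsilon'(\tilde w_\epsilon)\tilde w_\epsilon=0$ (which also requires $\|\tilde w_\epsilon^-\|\to\|\tilde w_0^-\|$) does not go through.

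The remedy is much simpler and avoids concentration--compactness entirely; this is why the paper can cite the subcritical result \cite[Lemmas~3.1, 3.3]{AG2} verbatim. Along any sequence $\epsilon\to 0$, extract $t_\epsilon\to t_0$, $v_\epsilon\rightharpoonup v_0$, so $\tilde w_\epsilon^+=t_\epsilon w_0^+\to t_0w_0^+$ \emph{strongly}. By weak lower semicontinuity of $\|\cdot\|$ on $E^-$ and Fatou's lemma (using $A(\epsilon x)F(\tilde w_\epsilon)\ge 0$ and a.e.\ convergence),
\[
\limsup_{\epsilon\to 0} I_\epsilon(\tilde w_\epsilon)
\ \le\ \tfrac12\|\tilde w_0^+\|^2-\tfrac12\|\tilde w_0^-\|^2-\int A(0)F(\tilde w_0)\,dx
= I_0(\tilde w_0)\ \le\ \max_{\widehat E(w_0)}I_0=c_0.
\]
Combined with $c_0\le c_\epsilon\le I_\epsilon(\tilde w_\epsilon)$ this forces $c_\epsilon\to c_0$, and equality throughout yields $I_0(\tilde w_0)=c_0$, so by uniqueness of the maximizer $\tilde w_0=w_0$, i.e.\ $t_0=1$, $v_0=0$; moreover $\|\tilde w_\epsilon^-\|\to\|w_0^-\|$, giving $v_\epsilon\to 0$ strongly. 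No sub-threshold energy bound is needed at this stage.
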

\begin{proof}
See \cite[Lemmas 3.1 and 3.3]{AG2}. 
\end{proof}

\begin{corollary}\label{limitacaoce}
There exists $\epsilon_{0}>0$ such that 
$$
c_{\epsilon}<d_{A_{\infty}}\ \text{ and }\ c_{\epsilon}<\frac{S^{N/2}}{NA(0)^{\frac{N-2}{2}}}, \quad \forall \epsilon \in (0, \epsilon_0).
$$
\end{corollary}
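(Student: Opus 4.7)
The plan is to combine Lemma \ref{Lema1}, Proposition \ref{continuidadec}, and the a priori estimate (\ref{limitacaod4}) already established for the autonomous levels $d_\lambda$. First I would observe that when $\epsilon=0$ the function $A(\epsilon x)$ reduces to the constant $A(0)$, so the functional $I_0$ coincides with $J_{A(0)}$ and the set $\M_0$ coincides with $\mathcal{N}_{A(0)}$. Consequently
$$
c_0 \;=\; \inf_{\M_0} I_0 \;=\; \inf_{\mathcal{N}_{A(0)}} J_{A(0)} \;=\; d_{A(0)}.
$$

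Next I would use hypothesis $(A)$, which gives $A(0)=\sup_{\R^N}A > A_\infty \geq A_0$, together with Proposition \ref{continuidadec}, which says $\lambda\mapsto d_\lambda$ is (strictly) decreasing on $[A_0,+\infty)$. This yields the strict inequality $c_0 = d_{A(0)} < d_{A_\infty}$. In parallel, applying (\ref{limitacaod4}) at $\lambda=A(0)$ gives
$$
c_0 \;=\; d_{A(0)} \;<\; \frac{1}{N}\,\frac{S^{N/2}}{A(0)^{(N-2)/2}}.
$$

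Finally, by Lemma \ref{Lema1}, $c_\epsilon \to c_0$ as $\epsilon\to 0^+$. Since both inequalities for $c_0$ are strict, they pass to $c_\epsilon$ for all sufficiently small $\epsilon$, producing the desired $\epsilon_0>0$ such that both $c_\epsilon < d_{A_\infty}$ and $c_\epsilon < \frac{S^{N/2}}{N A(0)^{(N-2)/2}}$ hold for every $\epsilon\in(0,\epsilon_0)$.

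There is no serious obstacle here; the only point worth checking carefully is that the monotonicity in Proposition \ref{continuidadec} is strict, which is needed for the inequality $c_0 < d_{A_\infty}$. This is inherited from \cite[Proposition 2.3]{AG2} and can also be verified directly: for $\lambda_1 < \lambda_2$, taking a ground state $u_{\lambda_1}\in\mathcal{N}_{\lambda_1}$ and the unique $tu_{\lambda_1}+v\in\mathcal{N}_{\lambda_2}\cap \widehat{E}(u_{\lambda_1})$, one gets $d_{\lambda_2}\le J_{\lambda_2}(tu_{\lambda_1}+v)<J_{\lambda_1}(tu_{\lambda_1}+v)\le J_{\lambda_1}(u_{\lambda_1})=d_{\lambda_1}$, where the strict middle inequality comes from $F\ge 0$ with $F(s)>0$ for $s\neq 0$.
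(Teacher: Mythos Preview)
Your proposal is correct and follows essentially the same approach as the paper. The paper's proof simply asserts $c_0<d_{A_\infty}$ and cites (\ref{limitacaod4}) for $c_0<\frac{S^{N/2}}{NA(0)^{(N-2)/2}}$, then invokes Lemma~\ref{Lema1}; you have merely filled in the justification for $c_0<d_{A_\infty}$ by making explicit the identification $c_0=d_{A(0)}$ and the strict monotonicity of $\lambda\mapsto d_\lambda$ from Proposition~\ref{continuidadec}, which is a helpful but not substantively different addition.
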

\begin{proof} Since $c_0 < d_{A_\infty}$ and 
$$	
c_{0}<\frac{S^{N/2}}{NA(0)^{\frac{N-2}{2}}} \quad (\mbox{see} \,\,\, (\ref{limitacaod4}) ),
$$
the corollary is an immediate  consequence of Lemma \ref{Lema1}.
\end{proof}

\vspace{0.5 cm}

The next result is essential to show the existence of ground state solution of $(P)_{\epsilon}$ for $\epsilon$ small enough. Since it follows as in \cite[Proposition 2.16]{AG}, we omit its proof. 

\begin{proposition} \label{sequenciaPS}
There exists a bounded sequence $(u_{n}) \subset \M_{\epsilon}$ such that $(u_{n})$ is $(PS)_{c_{\epsilon}}$ for $I_{\epsilon}$.
\end{proposition}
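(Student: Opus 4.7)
The plan is to apply the Szulkin--Weth reduction, suitably adapted to the critical case as in \cite{AG}. First, for each $w\in E^{+}\setminus\{0\}$ I would show that
$$
(t,v)\longmapsto I_{\epsilon}(tw+v)=\frac{t^{2}}{2}\|w\|^{2}-\frac{1}{2}\|v\|^{2}-\int A(\epsilon x)F(tw+v)\,dx
$$
on $[0,+\infty)\times E^{-}$ admits a unique global maximizer $(t_{w},v_{w})$ with $t_{w}>0$. Attainment follows from coercivity in $-v$ (since $F\geq 0$) together with the estimate $I_{\epsilon}\leq 0$ outside a sufficiently large ball of $\widehat{E}(w)$, provided by \cite[Lemma 2.2]{AG2}; uniqueness rests on the strict monotonicity of $t\mapsto f(t)/t$, automatic from $(f_{0})$ when $N\geq 3$ and given by $(f_{2})$ when $N=2$. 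The Euler equations at the maximizer force $\hat{m}_{\epsilon}(w):=t_{w}w+v_{w}\in\M_{\epsilon}$.

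Next, standard arguments (see \cite{SW} and \cite{AG}) show that $\hat{m}_{\epsilon}$ is a $C^{1}$ homeomorphism from the unit sphere $S^{+}=\{w\in E^{+}:\|w\|=1\}$ onto $\M_{\epsilon}$, and that the reduced functional $\Psi_{\epsilon}(w):=I_{\epsilon}(\hat{m}_{\epsilon}(w))$ is of class $C^{1}(S^{+},\R)$ with $\inf_{S^{+}}\Psi_{\epsilon}=c_{\epsilon}$ and
$$
\Psi_{\epsilon}'(w)\varphi=\|\hat{m}_{\epsilon}(w)^{+}\|\,I_{\epsilon}'(\hat{m}_{\epsilon}(w))\varphi,\qquad\forall\,\varphi\in T_{w}S^{+}.
$$
Ekeland's principle on the complete Hilbert manifold $S^{+}$ then produces $(w_{n})\subset S^{+}$ with $\Psi_{\epsilon}(w_{n})\to c_{\epsilon}$ and $\|\Psi_{\epsilon}'(w_{n})\|_{*}\to 0$. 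Setting $u_{n}:=\hat{m}_{\epsilon}(w_{n})$, the membership $u_{n}\in\M_{\epsilon}$ forces $I_{\epsilon}'(u_{n})$ to vanish on $\R u_{n}^{+}\oplus E^{-}$, while the displayed identity controls $I_{\epsilon}'(u_{n})$ on the complement $T_{w_{n}}S^{+}$; hence $I_{\epsilon}(u_{n})\to c_{\epsilon}$ and $I_{\epsilon}'(u_{n})\to 0$ in $(H^{1}(\R^{N}))^{*}$.

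To show $(u_{n})$ is bounded, I would replay the calculation already used in the proof of Proposition \ref{continuidadec}. From $I_{\epsilon}(u_{n})-\tfrac{1}{2}I_{\epsilon}'(u_{n})u_{n}=c_{\epsilon}+o_{n}(1)$ and the pointwise inequality $\tfrac{1}{2}f(t)t-F(t)\geq\bigl(\tfrac{1}{2}-\tfrac{1}{q}\bigr)f(t)t$ available from $(f_{0})$ when $N\geq 3$ (and from $(f_{3})$ with $q$ replaced by $\theta$ when $N=2$), one obtains a uniform bound on $\int A(\epsilon x)f(u_{n})u_{n}\,dx$. Splitting $f=g+l$ with $g(t)=\chi_{[-1,1]}(t)f(t)$ and $l(t)=\chi_{[-1,1]^{c}}(t)f(t)$ exactly as in (\ref{limitacaofunun}) and applying H\"older with the conjugate exponents $(q+1)/q$ and $2^{*}/(2^{*}-1)$ (the analogous subcritical pair for $N=2$) yields
$$
\Bigl|\int A(\epsilon x)f(u_{n})u_{n}^{+}\,dx\Bigr|\leq C\|u_{n}^{+}\|.
$$
Since $u_{n}\in\M_{\epsilon}$ gives $\|u_{n}^{+}\|^{2}=\int A(\epsilon x)f(u_{n})u_{n}^{+}\,dx$, this forces $\|u_{n}^{+}\|\leq C$, and the bound $\|u_{n}^{-}\|\leq\|u_{n}^{+}\|$ used in the proof of Proposition \ref{continuidadec} completes the argument.

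The main technical obstacle I anticipate is establishing that $\hat{m}_{\epsilon}$ is $C^{1}$ in the critical regime, where the usual implicit function theorem argument loses compactness at the Sobolev exponent $2^{*}$; however, the $f=g+l$ splitting confines the critical piece to the pure power $|t|^{2^{*}-2}t$ (respectively a single exponential term when $N=2$), and the full verification can then be carried out as in \cite[Proposition 2.16]{AG} and its two-dimensional analogue in Section 3.
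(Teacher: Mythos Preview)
Your proposal is correct and follows precisely the Szulkin--Weth reduction strategy that the paper invokes by citing \cite[Proposition 2.16]{AG}. Your final concern is misplaced: the $C^{1}$-regularity is needed only for $\Psi_{\epsilon}$, not for $\hat{m}_{\epsilon}$ itself, and this follows from the derivative identity you display once continuity of $\hat{m}_{\epsilon}$ is established.
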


The following result is the main result this section

\begin{theorem}\label{existenciapontocritico}
The problem $(P)_{\epsilon}$ has a ground state solution for all $\epsilon\in(0,\epsilon_{0})$, where $\epsilon_0>0$ was given in Corollary \ref{limitacaoce}.
\end{theorem}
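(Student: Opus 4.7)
The plan is to obtain a nontrivial weak limit of a bounded Palais--Smale sequence at level $c_\epsilon$ and identify it as a ground state. By Proposition \ref{sequenciaPS} take $(u_n)\subset\M_\epsilon$ bounded with $I_\epsilon(u_n)\to c_\epsilon$ and $I_\epsilon'(u_n)\to 0$, and extract a subsequence with $u_n \rightharpoonup u$ in $H^1(\R^N)$, $u_n \to u$ a.e.\ and in $L^p_{\mathrm{loc}}$ for $p\in[1,2^*)$. Standard arguments using the growth $|f(t)|\leq C(|t|^{q-1}+|t|^{2^*-1})$ in $(f_0)$ yield $I_\epsilon'(u)=0$. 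Once $u\neq 0$ is known, the argument of Claim \ref{C1} transcribed for $I_\epsilon$ forces $u^+\neq 0$, so $u\in\M_\epsilon$ and $I_\epsilon(u)\geq c_\epsilon$; Fatou's lemma applied to the nonnegative integrand $\tfrac12 f(t)t - F(t)$ yields the reverse inequality
\[
c_\epsilon = \lim_n \left[ I_\epsilon(u_n) - \tfrac12 I_\epsilon'(u_n)u_n \right] \geq I_\epsilon(u) - \tfrac12 I_\epsilon'(u)u = I_\epsilon(u).
\]

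The main task is therefore to rule out $u=0$. I argue by contradiction using Lions' dichotomy in $L^2$. \emph{Non-vanishing case}: there exist $(y_n)\subset\Z^N$ and $r,\eta>0$ with $\int_{B_r(y_n)}|u_n|^2\,dx \geq \eta$; since $u_n\to 0$ in $L^2_{\mathrm{loc}}$ and $\epsilon$ is fixed, necessarily $|\epsilon y_n|\to\infty$. The $\Z^N$-periodicity of $V$ makes $\tilde u_n(x):=u_n(x+y_n)$ a bounded sequence with $\tilde u_n \rightharpoonup \tilde u \neq 0$ in $H^1(\R^N)$, and since $A(\epsilon(x+y_n))\to A_\infty$ pointwise, passage to the limit in the shifted Euler--Lagrange equation yields $J'_{A_\infty}(\tilde u)=0$. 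The argument of Claim \ref{C1} forces $\tilde u^+\neq 0$, so $\tilde u \in \mathcal{N}_{A_\infty}$ and $J_{A_\infty}(\tilde u) \geq d_{A_\infty}$. Fatou's lemma applied to the nonnegative integrand then gives
\[
c_\epsilon = \lim_n \int A(\epsilon(x+y_n))\left[\tfrac12 f(\tilde u_n)\tilde u_n - F(\tilde u_n)\right] dx \geq J_{A_\infty}(\tilde u) \geq d_{A_\infty},
\]
contradicting $c_\epsilon < d_{A_\infty}$ from Corollary \ref{limitacaoce}.

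\emph{Vanishing case}: then $u_n \to 0$ in $L^p(\R^N)$ for every $p\in(2,2^*)$, so only the critical term is active. Applying Lions' Concentration--Compactness Principle to $|\nabla u_n|^2\rightharpoonup\mu$, $|u_n|^{2^*}\rightharpoonup\nu$ exactly as in the proof of Proposition \ref{continuidadec}, but replacing the constant multiplier $\lambda$ by $A(\epsilon x_i)$ at each concentration atom, every atom $x_i$ must satisfy $\nu_i \geq S^{N/2}/A(\epsilon x_i)^{N/2}$; the existence of one such atom would give
\[
c_\epsilon \geq \tfrac{1}{N}A(\epsilon x_i)\nu_i \geq \frac{S^{N/2}}{N\, A(0)^{(N-2)/2}},
\]
contradicting the second bound of Corollary \ref{limitacaoce}. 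If no atom is present, then $\int A(\epsilon x)|u_n|^{2^*}dx\to 0$; combined with $I_\epsilon'(u_n)u_n^+\to 0$ this forces $\|u_n^+\|\to 0$, contradicting $\|u_n^+\|^2 \geq 2 c_\epsilon$ from \eqref{umaislimitadainferiormente}. The main obstacle is precisely this critical-growth loss of compactness, which splits into translation to infinity (blocked by $c_\epsilon<d_{A_\infty}$) and bubble concentration (blocked by $c_\epsilon<S^{N/2}/(NA(0)^{(N-2)/2})$); both mechanisms are excluded by Corollary \ref{limitacaoce}.
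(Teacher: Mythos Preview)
Your overall strategy is sound and largely parallels the paper's, but there is a genuine gap in the \emph{vanishing case}. When $u_n\to 0$ in $L^p(\R^N)$ for $p\in(2,2^*)$ and the concentration--compactness principle applied to $(u_n)$ produces no atoms, you assert that $\int A(\epsilon x)|u_n|^{2^*}dx\to 0$. This is not justified: absence of atoms together with $u=0$ only yields $u_n\to 0$ in $L^{2^*}_{\mathrm{loc}}(\R^N)$, since weak-$*$ convergence of $|u_n|^{2^*}$ as measures tests only against compactly supported functions. The critical mass may escape to infinity; a moving bubble $n^{(N-2)/2}\phi(n(x-x_n))$ with $|x_n|\to\infty$ satisfies $|u_n|_2\to 0$ (hence lies in your $L^2$-vanishing case), has $|u_n|^{2^*}\rightharpoonup 0$ as measures (no finite atoms), yet $\int|u_n|^{2^*}$ stays bounded away from zero. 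In fact, under your hypotheses the identity $I_\epsilon(u_n)-\tfrac12 I_\epsilon'(u_n)u_n\to c_\epsilon$ combined with $u_n\to 0$ in $L^{q+1}$ forces $\tfrac{1}{N}\int A(\epsilon x)|u_n|^{2^*}dx\to c_\epsilon>0$, so precisely this scenario remains unexcluded.

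The paper circumvents this by running the dichotomy in $L^{2^*}$ rather than $L^2$: it invokes \cite[Lemma~2.1]{RWW} to obtain $(z_n)\subset\Z^N$ with $\int_{B_r(z_n)}|u_n|^{2^*}dx>\eta$ (otherwise $u_n\to 0$ in $L^p$ for all $p\in(2,2^*]$, \emph{including} $p=2^*$, contradicting \eqref{umaislimitadainferiormente}). It then shows $(z_n)$ is bounded by applying CCP to the \emph{shifted} sequence $w_n=u_n(\cdot+z_n)$: the $L^{2^*}$ lower bound on $B_r(0)$ after shift rules out the no-atom alternative, atoms are excluded via the second inequality of Corollary~\ref{limitacaoce}, so $w\neq 0$ and the Fatou argument gives $d_{A_\infty}\leq c_\epsilon$, a contradiction. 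Your proof can be repaired either by switching to this $L^{2^*}$-based dichotomy, or by adding to your vanishing case an explicit treatment of concentration at infinity (introducing $\nu_\infty,\mu_\infty$ and deriving $\nu_\infty\geq S^{N/2}/A_\infty^{N/2}$, which again contradicts Corollary~\ref{limitacaoce}).
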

\begin{proof} Let  $(u_{n}) \subset \M_{\epsilon}$ be the $(PS)_{c_{\epsilon}}$ sequence for $I_{\epsilon}$ given in  Proposition \ref{sequenciaPS}. Then, there exist $(z_{n}) \subset \Z^{N}$ and $\eta,r>0$ such that
\begin{equation}\label{lionsun}
\int_{B_{r}(z_{n})}|u_{n}|^{2^{*}}dx>\eta, \quad \forall n \in \mathbb{N}. 
\end{equation}
In fact, otherwise, by \cite[Lemma 2.1]{RWW}, $u_{n}\rightarrow 0$ in $L^{p}(\R^{N})$ for all $p\in(2,2^{*}]$. Then, 
$$
||u_{n}^{+}||^{2}=\int A(\epsilon x)f(u_{n})u_{n}^{+}dx\rightarrow0,
$$
which is a contradiction with (\ref{umaislimitadainferiormente}), and (\ref{lionsun}) is proved.  
\begin{claim}\label{znlimitada}
The sequence $(z_{n})$ is bounded in $\R^{N}$.
\end{claim}
Arguing by contradiction, suppose $|z_{n}|\rightarrow+\infty$ and define $w_{n}(x):=u_{n}(x+z_{n})$. Then $(w_{n})$ is bounded, and for some subsequence, $w_{n}\rightharpoonup w$ in $H^{1}(\mathbb{R}^N)$. Our goal is to prove that $w\neq0$. Suppose $w=0$ and 
$$
|\nabla w_{n}|^{2}\rightharpoonup\mu,\ \ |w_{n}|^{2^{*}}\rightharpoonup\nu,\ \text{ in }\M^{+}(\R^{N}).
$$
By Concentration-Compactness Principle due to Lions \cite{lionsII}, there exist a countable set J, $(x_{i})_{i\in\text{J}} \subset \R^{N}$ and $(\mu_{i})_{i\in\text{J}},(\nu_{i})_{i\in\text{J}} \subset [0,+\infty)$ satisfying
$$
\nu=\sum_{i\in\text{J}}\nu_{i}\delta_{x_{i}},\ \ \mu\geq\sum_{i\in\text{J}}\mu_{i}\delta_{x_{i}},\ \text{ and} \quad \mu_{i}=S\nu_{i}^{2/2^{*}}.
$$
Next, we are going to prove that $\nu_{i}=0$ for all $i\in$J. Suppose that there exists $i\in$J such that $\nu_{i}\neq0$. Note that
$$
\begin{array}{l}
c_{\epsilon}=\lim_{n}\left(I_{\epsilon}(u_{n})-\frac{1}{2}I_{\epsilon}'(u_{n})u_{n}\right)\geq\frac{1}{N}\lim_{n}\int A(\epsilon x)|u_{n}|^{2^{*}}dx= \\
=\frac{1}{N}\lim_{n}\int A(\epsilon x+\epsilon z_{n})|w_{n}|^{2^{*}}dx\geq\frac{1}{N}\lim_{n}\int_{B_{\delta}(x_{i})} A(\epsilon x+\epsilon z_{n})|w_{n}|^{2^{*}}dx=\\
=\frac{1}{N}\lim_{n}\int_{B_{\delta}(x_{i})} (A(\epsilon x+\epsilon z_{n})-A_{\infty})|w_{n}|^{2^{*}}dx+\frac{1}{N}\lim_{n}\int_{B_{\delta}(x_{i})} A_{\infty}|w_{n}|^{2^{*}}dx\geq\\
\geq \frac{1}{N}\int A_{\infty}\varphi_{\delta/2}(x)d\nu,
\end{array}
$$
where $\varphi_{\delta}(x)=\varphi\left(\frac{x-x_{i}}{\delta}\right)$,  and $\varphi\in C^{\infty}_{c}(\R^{N})$ satisfies $0\leq\varphi\leq1$, $|\nabla\varphi|\leq2$, $\varphi\equiv1$ on $B_{1}(0)$ and $\varphi\equiv0$ on $\R^{N}\setminus B_{2}(0)$. 

By Dominated Convergence Theorem, 
$$
\lim_{\delta\rightarrow0}\int A_{\infty}\varphi_{\delta/2}(x)d\nu=A_{\infty}\nu_{i},
$$
thus
\begin{equation}\label{desigualdadece}
c_{\epsilon}\geq\frac{1}{N}A_{\infty}\nu_{i}.
\end{equation}
On the other hand, by a simple calculus, $(\varphi_{\delta}w_{n})$ is bounded in $H^{1}(\R^{N})$. Setting $\varphi_{\delta,n}(x):=\varphi_{t}(x-z_{n})$, 
$$
||\varphi_{\delta,n}u_{n}||=||\varphi_{\delta}w_{n}||, \quad \forall n \in \mathbb{N}
$$
and so, 
$$
I_{\epsilon}'(u_{n})(\varphi_{\delta,n}u_{n})\rightarrow 0,
$$
or equivalently
$$
\begin{array}{l}
\int|\nabla w_{n}|^{2}\varphi_{\delta}dx+\int(\nabla w_{n}\nabla\varphi_{\delta})w_{n}dx+\int V(x)\varphi_{\delta}w_{n}^{2}dx-\\
-\int A(\epsilon x+\epsilon z_{n})|w_{n}|^{q+1}\varphi_{\delta}dx-\int A(\epsilon x+\epsilon z_{n})|w_{n}|^{2^{*}}\varphi_{\delta}dx\rightarrow0
\end{array}
$$
Taking the limit $n\rightarrow+\infty$, and after  $\delta \to 0$, we obtain
$$
\mu(x_{i})=A_{\infty}\nu_{i}.
$$
Since $S\nu_{i}^{2/2^{*}}\leq\mu(x_{i})$, it follows that
\begin{equation}\label{concentracaolions}
S^{N/2}\leq A_\infty^{\frac{N}{2}}\nu_{i} \leq A(0)^{\frac{N-2}{2}}A_{\infty}\nu_{i}.
\end{equation}
By (\ref{desigualdadece}) and (\ref{concentracaolions}), 
$$
c_{\epsilon}\geq\frac{S^{N/2}}{NA(0)^{\frac{N-2}{2}}},
$$
which is impossible by Corollary \ref{limitacaoce}. Consequently $\nu_{i}=0$ for all $i\in$J, which means $w_{n}\rightarrow 0$ in $L^{2^{*}}_{loc}(\R^{N})$, contrary to (\ref{lionsun}). From this, $w\neq0$.

Now, consider $\psi\in H^{1}(\R^{N})$ and $\psi_{n}(x):=\psi(x+z_{n})$. Then, 
$$
o_n(1)=I_{\epsilon}'(u_{n})\psi_{n}=B(u_{n},\psi_{n})-\int A(\epsilon x)f(u_{n})\psi_{n}dx
$$
or equivalently
$$
o_{n}=B(w_{n},\psi)-\int A(\epsilon x+\epsilon z_{n})f(w_{n})\psi dx.
$$
Taking the limit $n\rightarrow+\infty$, $J_{A_{\infty}}'(w)\psi=0$. As $\psi\in H^{1}(\R^{N})$ is arbitrary, $w$ is a critical point of $J_{A_{\infty}}$, and thus, by Fatou's Lemma
$$
\begin{array}{ll}
d_{A_{\infty}}& \leq  J_{A_{\infty}}(w)=J_{A_{\infty}}(w)-\frac{1}{2}J_{A_{\infty}}'(w)w\\
& =\int A_{\infty}\left(\frac{1}{2}f(w)w-F(w)\right)dx\\
& \leq\liminf_{n}\int A(\epsilon x+\epsilon z_{n})\left(\frac{1}{2}f(w_{n})w_{n}-F(x,w_{n})\right)dx\\
& =\liminf_{n}\int A(\epsilon x)\left(\frac{1}{2}f(u_{n})u_{n}-F(u_{n})\right)dx\\
& =\lim_{n}\left(I_{\epsilon}(u_{n})-\frac{1}{2}I_{\epsilon}'(u_{n})u_{n}\right)=c_{\epsilon}<d_{A_{\infty}},\\
\end{array}
$$
which is absurd. Thereby $(z_{n})$ is bounded in $\R^{N}$, and the claim follows. 

Consider $R>0$ such that $B_{r}(z_{n})\subset B_{R}(0)$. By (\ref{lionsun}), 
$$
\int_{B_{R}(0)}|u_{n}|^{2^{*}}dx>\eta, \quad \forall n\in\N. 
$$
By considering that $u_{n}\rightharpoonup u$ and proceeding as in Claim \ref{znlimitada},  $u\neq0$. Since $u$ is a nontrivial critical point for $I_{\epsilon}$, we must have  $I_{\epsilon}(u)\geq c_{\epsilon}$. On the other hand, by Fatou's Lemma, 
$$
\begin{array}{ll}
c_{\epsilon}& =\lim_{n}\left(I_{\epsilon}(u_{n})-\frac{1}{2}I_{\epsilon}'(u_{n})u_{n}\right)=\lim_{n} \int A(\epsilon x)\left(\frac{1}{2}f(u_{n})u_{n}-F(u_{n})\right)dx\\ 
& \geq \int A(\epsilon x)\left(\frac{1}{2}f(u)u-F(u)\right)dx=I_{\epsilon}(u)-\frac{1}{2}I_{\epsilon}'(u)u=I_{\epsilon}(u).
\end{array}
$$
This proves that $u$ is a ground state solution of $(P)_{\epsilon}$ for all $\epsilon \in (0, \epsilon_0)$.
\end{proof}

\subsection{Concentration of the solutions.}
In  what follows, we consider the set 
$$
\mathcal{A}:=\{z\in\R^{N}\ ;\ A(z)=A(0)\},
$$ 
and a sequence $(\epsilon_{n}) \subset (0,\epsilon_{0})$ with $\epsilon_{n}\rightarrow0$ as $n\rightarrow+\infty$. Moreover, we fix $u_{n}\in H^{1}(\R^{N})$ satisfying 
$$
I_{n}(u_{n})=c_{n}\ \ \text{ and }\ \ I_{n}'(u_{n})=0,
$$
where $I_{n}:=I_{\epsilon_{n}}$ and $c_{n}:=c_{\epsilon_{n}}$. Using the same arguments explored in \cite[Lemma 2.6]{AG}, 
\begin{equation}\label{limitadoinferiormente}
||u_{n}^{+}||^{2}\geq2c_{n}\geq 2c_0, \quad \forall n \in \mathbb{N}.
\end{equation}

\begin{lemma}
The sequence $(u_{n})$ is bounded in $H^{1}(\R^{N})$.
\end{lemma}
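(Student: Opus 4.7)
The plan is to mirror, in slightly simplified form, the boundedness argument already carried out inside the proof of Proposition \ref{continuidadec}. Since $u_n$ is a critical point at level $c_n$, we have the identity
$$
c_n = I_n(u_n)-\tfrac12 I_n'(u_n)u_n = \int A(\epsilon_n x)\Bigl(\tfrac12 f(u_n)u_n-F(u_n)\Bigr)dx.
$$
Using the explicit form of $f$ in $(f_0)$ one checks that $\tfrac12 f(t)t-F(t)\geq\kappa\,f(t)t$ for some $\kappa>0$ depending only on $q$ and $2^*$, so combined with $A(\epsilon_n x)\geq A_0$ and the fact that $c_n\to c_0$ (Lemma \ref{Lema1}) this immediately yields a uniform bound
$$
\int f(u_n)u_n\,dx \leq C.
$$

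Next I would split $f=g+l$ with $g(t)=\xi|t|^{q-1}t$ and $l(t)=|t|^{2^*-2}t$, exactly as in Proposition \ref{continuidadec}. With $r=(q+1)/q$ and $s=2^*/(2^*-1)$ one has $|g(t)|^r\leq k\,tf(t)$ and $|l(t)|^s\leq k\,tf(t)$, so by H\"older's inequality and the Sobolev embedding
$$
\Bigl|\int f(u_n)u_n^{\pm}\,dx\Bigr|
\leq \Bigl(\int f(u_n)u_n\,dx\Bigr)^{1/r}|u_n^{\pm}|_{q+1}
+\Bigl(\int f(u_n)u_n\,dx\Bigr)^{1/s}|u_n^{\pm}|_{2^*}
\leq C\|u_n^{\pm}\|.
$$

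Since $I_n'(u_n)=0$, testing with $u_n^{+}\in E^{+}$ and with $u_n^{-}\in E^{-}$ gives
$$
\|u_n^{+}\|^2=\int A(\epsilon_n x)f(u_n)u_n^{+}\,dx,
\qquad
\|u_n^{-}\|^2=-\int A(\epsilon_n x)f(u_n)u_n^{-}\,dx,
$$
and, using $A(\epsilon_n x)\leq A(0)$ together with the H\"older estimate above, both right-hand sides are bounded by $CA(0)\|u_n^{\pm}\|$. This forces $\|u_n^{\pm}\|\leq C$ and hence $\|u_n\|\leq C$; the equivalence of norms \eqref{equivalente} then gives boundedness in $H^{1}(\R^N)$.

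The only delicate point is verifying that the constants produced in the H\"older step are genuinely independent of $n$ and of $\epsilon_n$; this requires only that $\int f(u_n)u_n\,dx$ is uniformly bounded (step one) and that $A$ is bounded above by $A(0)$, both of which hold. No compactness or concentration argument is needed here, since we only claim boundedness, not convergence; the harder concentration-compactness analysis is postponed to the subsequent lemmas.
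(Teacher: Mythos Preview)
Your proof is correct and follows essentially the same strategy as the paper (which here simply cites \cite[Lemma~2.10]{AG}, but the template you borrowed from Proposition~\ref{continuidadec} is exactly the right one). One minor remark: the splitting $g(t)=\xi|t|^{q-1}t$, $l(t)=|t|^{2^*-2}t$ is not ``exactly as in Proposition~\ref{continuidadec}'' --- there the cut is made via $\chi_{[-1,1]}$ --- but your monomial splitting satisfies the same pointwise bounds $|g|^{r},|l|^{s}\le k\,tf(t)$ and is arguably more natural for this explicit $f$; also, by testing directly with $u_n^{-}$ you bypass the contradiction argument used there, which is a small streamlining.
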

\begin{proof}
See \cite[Lemma 2.10]{AG}.
\end{proof}

\begin{lemma}
There exist $(y_{n}) \subset \Z^{N}$ and $r,\eta>0$ such that
$$
\int_{B_{r}(y_{n})}|u_{n}|^{2^{*}}dx>\eta, \quad \forall n\in\N.
$$

\end{lemma}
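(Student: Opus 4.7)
The plan is to argue by contradiction, following essentially the same scheme used to establish (\ref{lionsun}) in the proof of Theorem \ref{existenciapontocritico}. Suppose no such sequence $(y_n)$ and constants $r,\eta>0$ exist, so that after passing to a subsequence we have $\sup_{y\in\Z^{N}}\int_{B_{r}(y)}|u_{n}|^{2^{*}}\,dx\to 0$ for every $r>0$. Invoking the Lions-type lemma \cite[Lemma 2.1]{RWW} (exactly as in the derivation of (\ref{lionsun})) this forces
$$
u_{n}\rightarrow 0 \quad \text{in } L^{p}(\R^{N}) \quad \text{for every } p\in (2,2^{*}].
$$

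Next I would exploit the fact that $I_{n}'(u_{n})=0$, and in particular $I_{n}'(u_{n})u_{n}^{+}=0$, which yields
$$
\|u_{n}^{+}\|^{2}= \int A(\epsilon_{n}x)f(u_{n})u_{n}^{+}\,dx = \xi\int A(\epsilon_{n}x)|u_{n}|^{q-1}u_{n}u_{n}^{+}\,dx+\int A(\epsilon_{n}x)|u_{n}|^{2^{*}-2}u_{n}u_{n}^{+}\,dx.
$$
Using $\|A\|_{\infty}<\infty$ and Hölder's inequality, the critical term is bounded by $\|A\|_{\infty}|u_{n}|_{2^{*}}^{2^{*}-1}|u_{n}^{+}|_{2^{*}}$, which tends to zero by the $L^{2^{*}}$ convergence and the boundedness of $(u_{n}^{+})$ in $H^{1}(\R^{N})$. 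The subcritical term is controlled by Hölder's inequality (using interpolation between $L^{2}$ and $L^{2^{*}}$ when $q+1>2^{*}$, which is allowed since $u_{n}$ is bounded in $L^{2}$ and $|u_{n}|_{2^{*}}\to 0$), so it also tends to zero. Hence $\|u_{n}^{+}\|\to 0$.

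This contradicts (\ref{limitadoinferiormente}), which guarantees $\|u_{n}^{+}\|^{2}\geq 2c_{n}\geq 2c_{0}>0$, and the claim follows. The main obstacle I expect is the $L^{2^{*}}$ step: classical Lions concentration lemmas only give vanishing in $L^{p}$ for $p\in (2,2^{*})$, so one must invoke the stronger version \cite[Lemma 2.1]{RWW} which extends to the critical exponent under the present hypotheses; this is the same tool already used earlier in the paper, so no new difficulty arises. The rest of the argument is a routine combination of Hölder's inequality with the lower bound (\ref{limitadoinferiormente}) coming from the ground state level $c_{n}\geq c_{0}>0$.
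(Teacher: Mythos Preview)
Your proof is correct and follows essentially the same approach as the paper: argue by contradiction, invoke \cite[Lemma 2.1]{RWW} to get $u_n\to 0$ in $L^p(\R^N)$ for every $p\in(2,2^*]$, use $I_n'(u_n)u_n^{+}=0$ to conclude $\|u_n^{+}\|\to 0$, and contradict the lower bound (\ref{limitadoinferiormente}). The paper's version is more terse (it simply asserts $\int A(\epsilon_n x)f(u_n)u_n^{+}\,dx\to 0$ without spelling out the H\"older estimates), but your argument fills in exactly those details; note only that the parenthetical about interpolation when $q+1>2^{*}$ is unnecessary, since under $(f_0)$ the subcritical exponent satisfies $q+1\le 2^{*}$ (otherwise the functional would not be well defined on $H^{1}$), so the H\"older step is direct.
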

\begin{proof}
Suppose the lemma were false. Then, by \cite[Lemma 2.1]{RWW}, $u_{n}\rightarrow 0$ in $L^{p}(\R^{N})$ for all $p\in(2,2^{*}]$, and so, 
$$
\int A(\epsilon_{n}x)f(u_{n})u_{n}^{+}dx\rightarrow 0.
$$
As $I_{n}'(u_{n})u_{n}^{+}=0$, it follows that $||u_{n}^{+}||^{2}\rightarrow0 $, a contradiction. This proves the lemma.
\end{proof}

\vspace{0.5 cm}

In the sequel, we fix $v_{n}(x):=u_{n}(x+y_{n})$ for all $x\in\R^{N}$ and for all $n\in\N$. Thereby, for some subsequence, we can assume that  $v_{n}\rightharpoonup v$ in $H^{1}(\R^2)$. It is very important to point out that only one of the cases below holds for some subsequence:
$$ 
\epsilon_{n}y_{n}\rightarrow z\in\R^{N}
$$
or
$$
|\epsilon_{n}y_{n}|\rightarrow+\infty.
$$
For this reason, we will consider a subsequence of $(\epsilon_{n})$ such that one of the above conditions holds. Have this in mind, let us denote 
$$
A_{z}:=\left\{\begin{array}{l} A(z),\text{ if the condition (1) holds} \\ A_{\infty},\text{ if the condition (2) holds}.\end{array}\right.
$$
Since $A$ is continuous, it follows that $|A(\epsilon_{n}x+\epsilon_{n}y_{n})-A_{z}|\rightarrow0$ uniformly with respect to $x$ on bounded Borel sets $B\subset\R^{N}$. Consequently
\begin{equation}\label{igualdadeconjuntoslimitados}
\lim\int_{B}A(\epsilon_{n}x+\epsilon_{n}y_{n})|v_{n}|^{2^{*}}\varphi dx=\lim \int_{B}A_{z}|v_{n}|^{2^{*}}\varphi dx,
\end{equation}
for each $\varphi\in L^{\infty}(\R^{N})$.

By using (\ref{igualdadeconjuntoslimitados}) and applying the same idea of Claim \ref{znlimitada}, we see that $v\neq0$.

\begin{lemma}\label{enynlimitado}
The sequence $(\epsilon_{n}y_{n})$ is bounded in $\R^{N}$. Moreover, $J'_{A(0)}(v)=0$ and if $\epsilon_{n}y_{n}\rightarrow z\in\R^{N}$, then $z\in\mathcal{A}$.  
\end{lemma}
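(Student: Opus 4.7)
The strategy is a translation-and-comparison argument that exploits the strict monotonicity of the autonomous level $\lambda\mapsto d_{\lambda}$. First I would rule out $|\epsilon_{n}y_{n}|\to+\infty$. Fix $\psi\in H^{1}(\R^{N})$ and set $\psi_{n}(x):=\psi(x-y_{n})$. The identity $I_{n}'(u_{n})\psi_{n}=0$, combined with the $\Z^{N}$-periodicity of $V$ and the change of variables $x\mapsto x+y_{n}$, yields
\[
0=B(v_{n},\psi)-\int A(\epsilon_{n}x+\epsilon_{n}y_{n})f(v_{n})\psi\,dx.
\]
Letting $n\to+\infty$, with $v_{n}\rightharpoonup v$ and (\ref{igualdadeconjuntoslimitados}), produces $J'_{A_{\infty}}(v)\psi=0$ for every $\psi$. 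Since $v\neq 0$, we get $v\in\mathcal{N}_{A_{\infty}}$ and hence $d_{A_{\infty}}\leq J_{A_{\infty}}(v)$. Rewriting $J_{A_{\infty}}(v)=J_{A_{\infty}}(v)-\tfrac{1}{2}J'_{A_{\infty}}(v)v=\int A_{\infty}(\tfrac{1}{2}f(v)v-F(v))\,dx$ and applying Fatou's lemma (the integrand is nonnegative by $(f_{0})$) together with the translation $y=x-y_{n}$, I obtain
\[
d_{A_{\infty}}\leq J_{A_{\infty}}(v)\leq\liminf_{n}\Bigl(I_{n}(u_{n})-\tfrac{1}{2}I'_{n}(u_{n})u_{n}\Bigr)=\lim_{n}c_{n}=c_{0}=d_{A(0)}.
\]
As $A(0)>A_{\infty}$, the strict monotonicity of $\lambda\mapsto d_{\lambda}$ yields $d_{A(0)}<d_{A_{\infty}}$, a contradiction. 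Hence $(\epsilon_{n}y_{n})$ is bounded.

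Passing to a subsequence with $\epsilon_{n}y_{n}\to z\in\R^{N}$ (so $A_{z}=A(z)$), the same limiting argument with $A_{\infty}$ replaced by $A(z)$ gives $J'_{A(z)}(v)=0$ and $d_{A(z)}\leq J_{A(z)}(v)\leq c_{0}=d_{A(0)}$. Because $A(z)\leq A(0)$, the monotonicity of $\lambda\mapsto d_\lambda$ gives $d_{A(z)}\geq d_{A(0)}$; the strict version then forces $A(z)=A(0)$, i.e., $z\in\mathcal{A}$. In particular $J'_{A(0)}(v)=J'_{A(z)}(v)=0$, which settles the two remaining assertions at once.

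The main obstacle is the strict monotonicity of $\lambda\mapsto d_{\lambda}$, since Proposition \ref{continuidadec} records only nonstrict monotonicity. To recover strictness, for $A_{0}\leq\lambda_{1}<\lambda_{2}$ I would take a ground state $w_{1}$ of $(AP)_{\lambda_{1}}$, let $\tilde{w}\in\mathcal{N}_{\lambda_{2}}\cap\hat{E}(w_{1}^{+})$ be the unique maximizer of $J_{\lambda_{2}}|_{\hat{E}(w_{1}^{+})}$ (see (\ref{maximo})), and compute
\[
d_{\lambda_{2}}\leq J_{\lambda_{2}}(\tilde{w})=J_{\lambda_{1}}(\tilde{w})-(\lambda_{2}-\lambda_{1})\int F(\tilde{w})\,dx<J_{\lambda_{1}}(\tilde{w})\leq J_{\lambda_{1}}(w_{1})=d_{\lambda_{1}},
\]
where $\int F(\tilde{w})\,dx>0$ because $\tilde{w}\neq 0$ and $F\geq 0$ with $F(t)=0$ only at $t=0$ by $(f_{0})$.
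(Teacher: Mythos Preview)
Your argument is correct and follows the same translation-and-comparison strategy as the paper's proof: the paper also tests with $\psi_n(x)=\psi(x-y_n)$, passes to the limit to get $J'_{A_\infty}(v)=0$ (resp.\ $J'_{A(z)}(v)=0$), and then compares levels via Fatou to reach the contradiction $d_{A_\infty}\le c_0$ (resp.\ to conclude $d_{A(z)}\le d_{A(0)}$ and hence $A(0)\le A(z)$). Your explicit verification that $\lambda\mapsto d_\lambda$ is \emph{strictly} decreasing is a welcome addition, since the paper relies on this fact (both in invoking $c_0<d_{A_\infty}$ and in deducing $A(0)\le A(z)$ from $d_{A(z)}\le d_{A(0)}$) but takes it for granted via the reference to \cite{AG2}; one cosmetic remark is that it is cleanest to test first with $\psi\in C_c^\infty(\R^N)$, as the paper does, so that the limit in the nonlinear term is immediate from local compactness and (\ref{igualdadeconjuntoslimitados}).
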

\begin{proof}
First of all, we will prove that $(\epsilon_{n}y_{n})$ is bounded. Suppose that $|\epsilon_{n}y_{n}|\rightarrow+\infty$. Consider $\psi\in C^{\infty}_{c}(\R^{N})$ and  $\psi_{n}(x):=\psi(x-y_{n})$. Since $I_{n}'(u_{n})\psi_{n}=0$ for all $n\in\N$, then
$$
\int\nabla u_{n}\nabla\psi_{n}+V(x)u_{n}\psi_{n}dx=\int A(\epsilon_{n}x)f(u_{n})\psi_{n}dx,
$$
or equivalently
$$
\int\nabla v_{n}\nabla\psi+V(x)v_{n}\psi dx=\int A(\epsilon_{n}x+\epsilon_{n}y_{n})f(v_{n})\psi dx.
$$
Taking the limit $n\rightarrow+\infty$, we derive
$$
\int\nabla v\nabla\psi+V(x)v\psi dx=\int A_{\infty}f(v)\psi dx,
$$
thereby showing that $J_{A_{\infty}}'(v)=0$. As  $v\neq0$, the Fatou's Lemma yields
$$
\begin{array}{ll}
d_{A_{\infty}}& \leq J_{A_{\infty}}(v)=J_{A_{\infty}}(v)-\frac{1}{2}J_{A_{\infty}}'(v)v=\int A_{\infty}\left(\frac{1}{2}f(v)v-F(v)\right)dx \\
& \leq\liminf_{n}\int A(\epsilon_{n}x+\epsilon_{n}y_{n})\left(\frac{1}{2}f(v_{n})v_{n}-F(v_{n})\right)dx \\
& =\liminf_{n}\int A(\epsilon_{n}x)\left(\frac{1}{2}f(u_{n})u_{n}-F(u_{n})\right)dx\\
& =\liminf_{n}\left(I_{n}(u_{n})-\frac{1}{2}I_{n}'(u_{n})u_{n}\right)=\lim_{n} c_{n}=c_{0},
\end{array}
$$
which is absurd, because $c_{0}<d_{A_{\infty}}$. This completes the proof that $(\epsilon_{n}y_{n})$ is bounded in $\R^{N}$. Now suppose $\epsilon_{n}y_{n}\rightarrow z\in\R^{N}$. Arguing as above,  
$$
\int\nabla v\nabla\psi+V(x)v\psi dx=\int A(z)f(v)\psi dx,\ \ \psi\in C^{\infty}_{c}(\R^{N}),
$$
and so, $J_{A(z)}'(v)=0$. Hence, 
$$
d_{A(z)}\leq J_{A(z)}(v)-\frac{1}{2}J_{A(z)}'(v)v\leq\liminf_{n}\left(I_{n}(u_{n})-\frac{1}{2}I_{n}'(u_{n})u_{n}\right)=c_{0}=d_{A(0)}.
$$
Since $\lambda\mapsto d_{\lambda}$ is decreasing and $d_{A(z)}\leq d_{A(0)}$, we must have  $A(0)\leq A(z)$. From the fact that $A(0)=\max_{x\in\R^{N}}A(x)$, we obtain $A(0)=A(z)$, or equivalently, $z\in\mathcal{A}$. Moreover, we also have $J_{A(0)}'(v)=J_{A(z)}'(v)=0$.
\end{proof}

From now on we consider $\epsilon_{n}y_{n}\rightarrow z$ with $z\in\mathcal{A}$. Our goal is to prove that $v_{n}\rightarrow v$ in $H^{1}(\R^{N})$ and $v_{n}(x)\rightarrow0$ as $|x|\rightarrow+\infty$ uniformly in $n$. Have this in mind, we need of the following estimate

\begin{proposition}\label{funcaodominada}
There exists $h\in L^{1}(\R^{N})$ and a subsequence of $(v_n)$ such that
$$
|f(v_{n}(x))v_{n}(x)|\leq h(x), \quad \forall x\in\R^{N} \quad \mbox{and} \quad n \in \mathbb{N}.
$$

\end{proposition}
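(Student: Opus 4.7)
The plan is to produce $h$ by combining a uniform $L^{\infty}$ bound on $(v_{n})$ with uniform decay of $v_n$ at infinity, from which the integrable majorant follows by the classical converse of the dominated convergence theorem applied to strong $L^{q+1}\cap L^{2^{*}}$ convergence. Since $y_{n}\in\Z^{N}$ and $V$ is $\Z^{N}$-periodic, each $v_{n}$ solves
$$
-\Delta v_{n}+V(x)v_{n}=A(\epsilon_{n}x+\epsilon_{n}y_{n})f(v_{n})\quad\text{in }\R^{N},
$$
with the coefficient $A(\epsilon_{n}\cdot+\epsilon_{n}y_{n})$ uniformly bounded. First I would run a Brezis--Kato/Moser iteration on balls of fixed radius: the key local input is that $|v_{n}|^{2^{*}}$ cannot concentrate at any finite point, which is essentially the content of the Concentration-Compactness step already performed in the proofs of Proposition \ref{continuidadec} and Theorem \ref{existenciapontocritico} (a bubble at any $x_{i}$ would force the level to exceed $\tfrac{1}{N}S^{N/2}/A(0)^{(N-2)/2}$, contradicting Corollary \ref{limitacaoce}). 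This yields $\|v_{n}\|_{L^{\infty}(\R^{N})}\leq M$ uniformly in $n$.

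The next step is to prove uniform tail decay: for each $\eta>0$ there exists $R_{\eta}>0$ with
$$
\int_{|x|>R_{\eta}}\bigl(|v_{n}|^{q+1}+|v_{n}|^{2^{*}}\bigr)\,dx<\eta,\quad \forall n\in\N.
$$
The natural route is by contradiction: if there were $\delta>0$ and $\tilde{y}_{n}\in\Z^{N}$ with $|\tilde{y}_{n}|\to\infty$ and $\int_{B_{1}(\tilde{y}_{n})}|v_{n}|^{2^{*}}\,dx\geq\delta$, then setting $\tilde{v}_{n}(x):=v_{n}(x+\tilde{y}_{n})$ and extracting a nontrivial weak limit $\tilde{v}$, one repeats the translation argument of Claim \ref{znlimitada} and Lemma \ref{enynlimitado}: either $\epsilon_{n}(y_{n}+\tilde{y}_{n})\to w\in\R^{N}$, giving $J'_{A(w)}(\tilde{v})=0$, or $|\epsilon_{n}(y_{n}+\tilde{y}_{n})|\to\infty$, giving $J'_{A_{\infty}}(\tilde{v})=0$. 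In either case, a Fatou estimate produces an additional contribution of mass $\geq d_{A_{\infty}}$ or $\geq d_{A(w)}$ in the identity $c_{n}=I_{n}(u_{n})-\tfrac{1}{2}I_{n}'(u_{n})u_{n}$, forcing $c_{0}\geq d_{A(0)}+d_{A_{\infty}}>d_{A(0)}=c_{0}$, a contradiction.

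Combining the uniform $L^{\infty}$ bound with the tail decay gives $v_{n}\to v$ strongly in $L^{q+1}(\R^{N})\cap L^{2^{*}}(\R^{N})$ along a subsequence, and the classical converse of the dominated convergence theorem supplies functions $h_{1}\in L^{q+1}(\R^{N})$ and $h_{2}\in L^{2^{*}}(\R^{N})$ with $|v_{n}(x)|\leq\min(h_{1}(x),h_{2}(x))$ a.e. Setting
$$
h(x):=\xi\,h_{1}(x)^{q+1}+h_{2}(x)^{2^{*}}\in L^{1}(\R^{N})
$$
gives the required pointwise dominator for $|f(v_{n})v_{n}|=\xi|v_{n}|^{q+1}+|v_{n}|^{2^{*}}$. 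The main obstacle is the tail control at the critical exponent: the $L^{\infty}$ bound and the $H^{1}$ bound alone do not prevent $|v_{n}|^{2^{*}}$ from leaking to infinity, and the argument crucially depends on the strict inequality $c_{0}<d_{A_{\infty}}$ (Corollary \ref{limitacaoce}), which rules out a secondary bubble at infinity and is the same mechanism that placed $(\epsilon_{n}y_{n})$ in a compact set in Lemma \ref{enynlimitado}.
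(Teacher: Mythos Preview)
Your approach is workable in principle but takes a substantially longer and more delicate route than the paper's. The paper exploits a single observation that short-circuits everything you propose: since $v$ is a nontrivial critical point of $J_{A(0)}$ (Lemma \ref{enynlimitado}) and $c_{0}=d_{A(0)}$, one has
\[
d_{A(0)}\leq J_{A(0)}(v)=\int A(z)\Bigl(\tfrac{1}{2}f(v)v-F(v)\Bigr)\,dx,
\]
while Fatou's lemma on the nonnegative integrands $A(\epsilon_{n}x+\epsilon_{n}y_{n})\bigl(\tfrac{1}{2}f(v_{n})v_{n}-F(v_{n})\bigr)$ gives the reverse inequality $\leq\limsup_n\bigl(I_{n}(u_{n})-\tfrac{1}{2}I_{n}'(u_{n})u_{n}\bigr)=c_{0}=d_{A(0)}$. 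Hence the Fatou chain is an equality, which together with a.e.\ convergence forces the integrands to converge in $L^{1}(\R^{N})$; the converse of dominated convergence then yields $\tilde{h}\in L^{1}$ dominating them along a subsequence, and the elementary pointwise bound $\bigl(\tfrac{1}{2}-\tfrac{1}{q+1}\bigr)A_{0}\,f(v_{n})v_{n}\leq A(\epsilon_{n}x+\epsilon_{n}y_{n})\bigl(\tfrac{1}{2}f(v_{n})v_{n}-F(v_{n})\bigr)$ finishes. No Moser iteration, no secondary-bubble analysis, no tail estimate is needed here --- those appear only \emph{afterwards} in the paper (Lemma \ref{Lema2.13} and Corollary \ref{corolariolimitacaozn}), and in fact the paper's Moser iteration \emph{uses} the $L^{2^{*}}$ convergence that follows from this proposition.

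Your plan essentially inverts the paper's order of dependence. This can be carried out, but note two imprecisions in the sketch. First, ruling out bubbles at finite points only yields uniform smallness of $\int_{B_{R}(x)}|v_{n}|^{2^{*}}$ for centers $x$ in compact sets; a \emph{global} uniform $L^{\infty}$ bound from Brezis--Kato also requires the tail control of your step 2, so the two steps are intertwined rather than sequential (and in fact the $L^{\infty}$ bound is not needed at all: absence of bubbles already gives $v_{n}\to v$ in $L^{2^{*}}_{loc}$ directly). Second, the negation of uniform tail decay does not immediately hand you a single concentration center $\tilde{y}_{n}$ with $|\tilde{y}_{n}|\to\infty$; one must invoke a profile decomposition or an iterated Lions extraction to localise the escaping mass before the energy comparison can be run. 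Both points are fixable, but the resulting argument is several pages where the paper's is a few lines --- the ``equality in Fatou'' trick is the idea you are missing.
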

\begin{proof}
By Fatou's Lemma,
$$
\begin{array}{cl}
d_{A(0)} & \leq J_{A(0)}(v)=J_{A(0)}(v)-\frac{1}{2}J_{A(0)}'(v)v\\
 & =\int A(0)\left(\frac{1}{2}f(v)v-F(v)\right)dx  \\
 & =\int A(z)\left(\frac{1}{2}f(v)v-F(v)\right)dx \\
 & \leq\liminf_{n}\int A(\epsilon_{n}x+\epsilon_{n}y_{n})\left(\frac{1}{2}f(v_{n})v_{n}-F(v_{n})\right)dx \\
 & \leq\limsup_{n}\int A(\epsilon_{n}x+\epsilon_{n}y_{n})\left(\frac{1}{2}f(v_{n})v_{n}-F(v_{n})\right)dx \\ 
 &=\limsup_{n}\int A(\epsilon_{n}x)\left(\frac{1}{2}f(u_{n})u_{n}-F(u_{n})\right)dx \\
 & =\limsup_{n}\left(I_{n}(u_{n})-\frac{1}{2}I_{n}'(u_{n})u_{n}\right)=\lim_{n} c_{n}=c_{0}=d_{A(0)},
 \end{array}
 $$
from where it follows that
 $$
 \lim_{n}\int A(\epsilon_{n}x+\epsilon_{n}y_{n})\left(\frac{1}{2}f(v_{n})v_{n}-F(v_{n})\right)dx 
 =\int A(z)\left(\frac{1}{2}f(v)v-F(v)\right)dx.
 $$
 Since 
 $$
 A(\epsilon_{n}x+\epsilon_{n}y_{n})\left(\frac{1}{2}f(v_{n})v_{n}-F(v_{n})\right)\geq0
 $$
 and 
 $$
 A(\epsilon_{n}x+\epsilon_{n}y_{n})\left(\frac{1}{2}f(v_{n})v_{n}-F(v_{n})\right)\to A(z)\left(\frac{1}{2}f(v)v-F(v)\right) \quad \mbox{a.e. in} \quad \mathbb{R}^N,
 $$
 we can ensure that 
  $$
  A(\epsilon_{n}x+\epsilon_{n}y_{n})\left(\frac{1}{2}f(v_{n})v_{n}-F(v_{n})\right)\rightarrow A(z)\left(\frac{1}{2}f(v)v-F(v)\right) \quad \mbox{in} \quad L^{1}(\mathbb{R}^N).
  $$
 Thereby, there exists $\widetilde{h}\in L^{1}(\R^{N})$ such that, for some subsequence,
  $$
  A(\epsilon_{n}x+\epsilon_{n}y_{n})\left(\frac{1}{2}f(v_{n})v_{n}-F(v_{n})\right)\leq\widetilde{h}(x), \quad \forall n \in \mathbb{N}.
  $$
As
  $$
  \left(\frac{1}{2}-\frac{1}{q+1}\right)\left(\inf_{\R^{N}}A\right) f(v_{n})v_{n}\leq A(\epsilon_{n}x+\epsilon_{n}y_{n})\left(\frac{1}{2}f(v_{n})v_{n}-F(v_{n})\right),
  $$
  we get the desired result. 
\end{proof}

An immediate consequence of the last proposition is the following corollary  
\begin{corollary}
$v_{n}\rightarrow v$ in $L^{2^{*}}(\R^{N})$.
\end{corollary}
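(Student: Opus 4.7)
The plan is to exploit the pointwise domination given by the preceding proposition together with a Brezis--Lieb type argument. Since $f(t)t = \xi|t|^{q+1}+|t|^{2^{*}}$, the inequality
$$
|v_{n}(x)|^{2^{*}} \leq f(v_{n}(x))v_{n}(x) \leq h(x)
$$
holds for almost every $x$ and every $n$, with $h\in L^{1}(\R^{N})$. This is the key integrable envelope that carries the critical term under the integral sign.

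Next I would extract pointwise convergence. Since $v_{n}\rightharpoonup v$ in $H^{1}(\R^{N})$, the Rellich--Kondrachov theorem gives $v_{n}\to v$ in $L^{2}_{loc}(\R^{N})$, and after passing to a further subsequence we may assume $v_{n}(x)\to v(x)$ almost everywhere in $\R^{N}$. In particular $|v_{n}(x)|^{2^{*}}\to |v(x)|^{2^{*}}$ a.e.

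With the domination $|v_{n}|^{2^{*}}\leq h\in L^{1}(\R^{N})$ and the a.e.\ convergence in hand, the Lebesgue Dominated Convergence Theorem yields
$$
\int |v_{n}|^{2^{*}}\,dx\longrightarrow \int |v|^{2^{*}}\,dx,
$$
so in particular $\|v_{n}\|_{2^{*}}\to \|v\|_{2^{*}}$. Combining the a.e.\ convergence with the convergence of norms (via the Brezis--Lieb lemma, which gives $\|v_{n}-v\|_{2^{*}}^{2^{*}} = \|v_{n}\|_{2^{*}}^{2^{*}} - \|v\|_{2^{*}}^{2^{*}} + o_{n}(1)$) one concludes $\|v_{n}-v\|_{2^{*}}\to 0$, i.e.\ $v_{n}\to v$ in $L^{2^{*}}(\R^{N})$.

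I do not foresee any substantive obstacle here: the heavy lifting was already done in Proposition \ref{funcaodominada}, where the critical concentration was ruled out and an $L^{1}$ dominant for $f(v_{n})v_{n}$ was produced. The corollary is essentially a bookkeeping step that converts that $L^{1}$ control of the superlinear density into strong $L^{2^{*}}$ convergence via dominated convergence plus Brezis--Lieb.
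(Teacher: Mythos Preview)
Your proof is correct and follows essentially the same approach as the paper: the key points are the domination $|v_{n}|^{2^{*}}\leq f(v_{n})v_{n}\leq h$ from Proposition~\ref{funcaodominada} together with the a.e.\ convergence $v_{n}(x)\to v(x)$, after which dominated convergence (and, if you like, Brezis--Lieb) finishes the job. The paper states the argument in one line, but the content is the same; note also that since $|v_{n}-v|^{2^{*}}\leq C(h+|v|^{2^{*}})\in L^{1}(\R^{N})$, dominated convergence applied directly to $|v_{n}-v|^{2^{*}}$ would make the Brezis--Lieb step superfluous.
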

\begin{proof}
The result follows because $|v_{n}|^{2^{*}}\leq f(v_{n})v_{n}$ for all $n \in \mathbb{N}$ and $v_n(x) \to v(x)$ a.e. in $\mathbb{R}^N$.
\end{proof}

\vspace{0.5 cm}

Our next result establishes a key estimate involving the $L^{\infty}$ norm on balls for the sequence $(v_n)$. To this end, we fix $v_{n,+}=\max\{0,v_{n}\}$ and $v_{n,-}=\max\{0,-v_{n}\}$.
\begin{lemma} \label{Lema2.13}
There exist $R>0$ and $C>0$ such that
\begin{equation}\label{regularidade}
|v_{n}|_{L^{\infty}(B_{R}(x))}\leq C|v_{n}|_{L^{2^{*}}(B_{2R}(x))}, \quad \forall n\in\N\ \ \text{ and }\ \ \forall x\in\R^{N}.
\end{equation}
Hence, as $(v_n)$ is a bounded sequence in $L^{2^*}(\R^N)$, $v_{n}\in L^{\infty}(\R^{N})$ and there is $C>0$ such that 
\begin{equation}\label{regularidade0}
|v_{n}|_{\infty}\leq C, \quad \forall n\in\N.
\end{equation}
\end{lemma}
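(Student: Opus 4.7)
The plan is to establish \eqref{regularidade} by a Moser iteration argument performed uniformly in $n \in \N$ and in the ball center $x \in \R^{N}$. After translation, $v_{n}(x) = u_{n}(x+y_{n})$ satisfies
$$
-\Delta v_{n} + V(x+y_{n})\, v_{n} = A(\epsilon_{n}x + \epsilon_{n}y_{n})\bigl[\,\xi |v_{n}|^{q-1}v_{n} + |v_{n}|^{2^{*}-2}v_{n}\,\bigr] \quad \text{in } \R^{N},
$$
and the left-hand coefficients are bounded uniformly in $n$ by the $\Z^{N}$-periodicity of $V$. The subcritical term $|v_{n}|^{q-1}v_{n}$ is innocuous; the difficulty is the critical term $|v_{n}|^{2^{*}-2}v_{n}$, whose coefficient $|v_{n}|^{2^{*}-2}$ lies only in $L^{N/2}$.

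The first step is to extract from the $L^{2^{*}}$ convergence $v_{n} \to v$ just established the uniform absolute continuity of $\{|v_{n}|^{2^{*}}\}$: for every $\sigma > 0$ there exists $\delta > 0$ such that $|E| < \delta$ implies $\sup_{n}\int_{E} |v_{n}|^{2^{*}}\,dx < \sigma$. Choosing $R > 0$ so that $|B_{2R}| < \delta$ yields the crucial uniform smallness
$$
\sup_{x \in \R^{N},\, n \in \N} \int_{B_{2R}(x)} |v_{n}|^{2^{*}}\,dx < \sigma.
$$

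Next, for $\beta \geq 1$ and $L > 0$, set $v_{n,L} := \min\{|v_{n}|, L\}$ and test the equation against the admissible $H^{1}$-function $\eta^{2} v_{n}\, v_{n,L}^{2(\beta-1)}$, where $\eta \in C^{\infty}_{c}(B_{2R}(x))$ is a cutoff with $\eta \equiv 1$ on $B_{R}(x)$ and $|\nabla\eta| \leq 2/R$. Standard manipulations, the Sobolev inequality applied to $\eta |v_{n}| v_{n,L}^{\beta-1}$, and H\"older on the critical term produce
$$
\bigl\| \eta |v_{n}| v_{n,L}^{\beta-1}\bigr\|_{L^{2^{*}}}^{2} \leq C(\beta,R) \int_{B_{2R}(x)} |v_{n}|^{2\beta}\,dx + C\beta\,\sigma^{(2^{*}-2)/2^{*}} \bigl\|\eta |v_{n}| v_{n,L}^{\beta-1}\bigr\|_{L^{2^{*}}}^{2}.
$$
For each $\beta$ used in the iteration, choosing $\sigma$ small enough (which merely forces $R$ to be small) makes the last factor $\leq 1/2$, so it can be absorbed into the left-hand side. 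Letting $L \to \infty$ by monotone convergence gives the reverse-H\"older inequality
$$
\|v_{n}\|_{L^{2^{*}\beta}(B_{R}(x))} \leq \bigl[C(\beta,R)\bigr]^{1/(2\beta)} \|v_{n}\|_{L^{2\beta}(B_{2R}(x))}.
$$
Iterating with $\beta_{k} = (2^{*}/2)^{k}$ along a sequence of shrinking radii $R_{k}\downarrow R$, the usual telescoping of the Moser constants yields \eqref{regularidade}, and then \eqref{regularidade0} follows by taking the supremum in $x$ and recalling that $(v_{n})$ is bounded in $L^{2^{*}}(\R^{N})$.

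The main obstacle is the uniformity in both $n$ and $x$ needed to make the critical term absorbable. Without the $L^{2^{*}}$ convergence $v_{n} \to v$ provided by the preceding corollary, the smallness of $\int_{B_{2R}(x)} |v_{n}|^{2^{*}}$ might fail to hold for a radius $R$ independent of $n$, and the iteration would break down; it is precisely the equi-integrability of $\{|v_{n}|^{2^{*}}\}$ that lets a single $R$ work for every $n$ and every ball center.
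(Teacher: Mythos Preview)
Your outline captures the correct ingredients (equi-integrability from the $L^{2^{*}}$ convergence, truncated test functions, Sobolev, absorption of the critical term), but there is a genuine gap in the iteration step.

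In your key inequality the term to be absorbed is $C\beta\,\sigma^{(2^{*}-2)/2^{*}}\,\|\eta|v_{n}|v_{n,L}^{\beta-1}\|_{L^{2^{*}}}^{2}$, and absorption requires $C\beta\,\sigma^{(2^{*}-2)/2^{*}}\le \tfrac12$. You then iterate with $\beta_{k}=(2^{*}/2)^{k}\to\infty$. Since the smallness parameter $\sigma$ is fixed once you fix $R$ (via $|B_{2R}|<\delta$), the factor $C\beta_{k}\,\sigma^{(2^{*}-2)/2^{*}}$ eventually exceeds $\tfrac12$ and the absorption fails. Saying ``for each $\beta$ used in the iteration, choose $\sigma$ small'' does not help: that forces $R$ to depend on $\beta$, and since $\beta_{k}\to\infty$ you would be driven to $R=0$. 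No single radius $R>0$ survives the whole iteration in this scheme, so the conclusion \eqref{regularidade} does not follow as written.

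The paper's proof avoids this by a two-stage argument. The smallness of $\int_{B_{4R}(x)}|v_{n}|^{2^{*}}$ (coming from $v_{n}\to v$ in $L^{2^{*}}$) is used \emph{only once}, for the fixed exponent $\beta_{0}=2^{*}/2$, to absorb the critical term and obtain a uniform bound on $\int_{B_{3R}(x)}|v_{n}|^{2^{*2}/2}$. After this bootstrap, the coefficient $|v_{n}|^{2^{*}-2}$ lies uniformly in $L^{t}_{\mathrm{loc}}$ with $t=\tfrac{2^{*2}}{2(2^{*}-2)}>\tfrac{N}{2}$, i.e.\ it is now a \emph{subcritical} potential. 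The subsequent Moser iteration uses H\"older with this fixed $t$ (not with $N/2$), so the critical term contributes a bounded constant at every step rather than a factor that must be absorbed; the $\beta$-dependence then appears only in the harmless multiplicative constants whose infinite product converges. To repair your argument you should insert this preliminary $L^{2^{*}}\to L^{2^{*2}/2}$ step and run the iteration with the improved H\"older splitting, rather than trying to absorb at every stage.
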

\begin{proof}
It suffices to check that
$$
|v_{n,+}|_{L^{\infty}(B_{R}(x))}\leq C|v_{n,+}|_{L^{2^{*}}(B_{2R}(x))},
$$
for all $n\in\N$ and $x\in\R^{N}$, because similar reasoning proves
$$
|v_{n,-}|_{L^{\infty}(B_{R}(x))}\leq C|v_{n,-}|_{L^{2^{*}}(B_{2R}(x))},
$$
for all $n\in\N$ and $x\in\R^{N}$. To begin with, we recall that there exist $c_{1},c_{2}>0$ satisfying 
\begin{equation}\label{desigualdadesobref}
|f(t)|\leq c_{1}|t|+c_{2}|t|^{2^{*}-1},\ \ \text{for all } t \in \R
\end{equation}
and that $v_{n}$ is a solution for the problem
$$\left\{\begin{array}{l}
-\Delta v_{n}+V(x)v_{n}=A(\epsilon_{n}x+\epsilon_{n}y_{n})f(v_{n})\ \quad \mbox{in} \quad \R^{N}, \\
v_{n}\in H^{1}(\R^{N}).
\end{array}\right.$$
We consider $\eta\in C^{\infty}_{c}(\R^{N})$, $L>0$ and $\beta>1$ arbitrary, and define $z_{L,n}:=\eta^{2}v_{L,n}^{2(\beta-1)}v_{n,+}$ and $w_{L,n}:=\eta v_{n,+}v_{L,n}^{\beta-1}$ where $v_{L,n}=\min\{v_{n,+},L\}$. Applying $z_{L,n}$ as a test function, we find
\begin{equation}\label{desigualdade1}
\int\eta^{2}v_{L,n}^{2(\beta-1)}|\nabla v_{n,+}|^{2}dx\leq|A|_{\infty}\int|f(v_{n})|\eta^{2}v_{L,n}^{2(\beta-1)}v_{n,+}dx-
\end{equation}
$$
-\int V(x)v_{n}v_{L,n}^{2(\beta-1)}\eta^{2}v_{n,+}dx-2\int(\nabla v_{n}\nabla\eta)\eta v_{L,n}^{2(\beta-1)}v_{n,+}dx.
$$
Since 
\begin{equation}\label{desigualdade2}
\left|\int v_{L,n}^{2(\beta-1)}(v_{n,+}\nabla\eta)(\eta\nabla v_{n})dx\right|\leq C\int v_{L,n}^{2(\beta-1)}v_{n,+}^{2}|\nabla\eta|^{2}dx+
\end{equation}
$$
+\frac{1}{4}\int v_{L,n}^{2(\beta-1)}\eta^{2}|\nabla v_{n,+}|^{2}dx,
$$
combining (\ref{desigualdadesobref}), (\ref{desigualdade1}) and (\ref{desigualdade2}), we obtain
\begin{equation}\label{desigualdade3}
\int \eta^{2}v_{L,n}^{2(\beta-1)}|\nabla v_{n,+}|^{2}dx\leq C\int|v_{n,+}|^{2}\eta^{2}v_{L,n}^{2(\beta-1)}dx+
\end{equation}
$$+C\int|v_{n}|^{2^{*}}\eta^{2}v_{L,n}^{2(\beta-1)}dx+C\int v_{L,n}^{2(\beta-1)}v_{n,+}^{2}|\nabla\eta|^{2}dx$$
where $C>0$ is independently of $\beta>1, \eta\in C^{\infty}_{c}(\R^{N})$ and $L>0$.

On the other hand, since $H^{1}(\R^{N}) \hookrightarrow D^{1,2}(\R^{N})\hookrightarrow L^{2^{*}}(\R^{N})$, 
\begin{equation}\label{desigualdade4}
|w_{L,n}|_{2^{*}}^{2}\leq C\int|\nabla w_{L,n}|^{2}dx\leq C\int|\nabla\eta|^{2}v_{L,n}^{2(\beta-1)}v_{n,+}^{2}dx+
\end{equation}
$$
C\int\eta^{2} v_{L,n}^{2(\beta-1)}|\nabla v_{n,+}|^{2}dx+C\int\eta^{2}|\nabla v_{L,n}^{(\beta-1)}|^{2}v_{n,+}^{2}dx,
$$
and thus
\begin{equation}\label{desigualdade5}
|w_{L,n}|^{2}_{2^{*}}\leq C\beta^{2}\left(\int|\nabla\eta|^{2}v_{L,n}^{2(\beta-1)}v_{n,+}^{2}dx+\int\eta^{2}v_{L,n}^{2(\beta-1)}|\nabla v_{n,+}|^{2}dx\right).
\end{equation}
Then, from (\ref{desigualdade3}) and (\ref{desigualdade5}), 
\begin{equation}\label{desigualdade6}
|w_{L,n}|^{2}_{2^{*}}\leq C\beta^{2}\left(\int|v_{n,+}|^{2}\eta^{2}v_{L,n}^{2(\beta-1)}dx+\right.
\end{equation}
$$
\left.+\int|v_{n}|^{2^{*}}\eta^{2}v_{L,n}^{2(\beta-1)}dx+\int v_{L,n}^{2(\beta-1)}v_{n,+}^{2}|\nabla\eta|^{2}dx\right),
$$
where $C>0$ is independently of $n\in\N$, $\beta>1$, $L>0$ and $\eta\in C^{\infty}_{c}(\R^{N})$.

\begin{claim}\label{afirmacaoregularidade}
There exists $R>0$ such that
$$\sup_{n\in\N, x\in\R^{N}}\int_{B_{3R}(x)} v_{n,+}^{\frac{{2^{*}}^{2}}{2}}dx<+\infty.$$
\end{claim}
In fact, fix $\beta_{0}:=\frac{2^{*}}{2}$. By using the limit $v_{n}\rightarrow v$ in $L^{2^{*}}(\R^{N})$, we can fix $R>0$ sufficiently small verifying 
\begin{equation}
C\beta_{0}^{2}\left(\int_{B_{4R}(x)}v_{n,+}^{2^{*}}dx\right)^{\frac{2^{*}-2}{2}}<\frac{1}{2},\ \text{ for all }n\in\N\text{ and }x\in\R^{N},
\end{equation}
where $C$ is given in (\ref{desigualdade6}). On the other hand, consider $\eta_{x}\in C^{\infty}_{c}(\R^{N},[0,1])$ such that $\eta_{x}\equiv1$ on $B_{3R}(x)$, $\eta_{x}\equiv0$ on $\R^{N}\setminus B_{4R}(x)$ and $x\mapsto||\nabla\eta_{x}||_{\infty}$ is a constant function. Then,
$$\int v_{n,+}^{2^{*}}\eta_{x}^{2}v_{L,n}^{2(\beta_{0}-1)}=\int v_{n,+}^{2^{*}}\eta_{x}^{2}v_{L,n}^{2^{*}-2}=\int_{B_{4R}(x)}\left(v_{n,+}^{2}\eta_{x}^{2}v_{L,n}^{2^{*}-2}\right)v_{n,+}^{2^{*}-2}dx\leq$$
$$\leq\left(\int\left(v_{n,+}\eta_{x}v_{L,n}^{\frac{2^{*}-2}{2}}\right)^{2^{*}}dx\right)^{\frac{2}{2^{*}}}\left(\int_{B_{4R}(x)}v_{n,+}^{2^{*}}dx\right)^{\frac{2^{*}-2}{2}}\leq\frac{1}{2C\beta_{0}^{2}}|w_{L,n}|^{2}_{2^{*}}$$
Applying (\ref{desigualdade6}) with $\eta=\eta_{x}$ and $\beta=\beta_{0}$, we get
$$
|w_{L,n}|^{2}_{2^{*}}\leq C\beta_{0}^{2}\left(\int\eta_{x}^{2}v_{n,+}^{2^{*}}dx+\frac{1}{2C\beta_{0}}|w_{L,n}|^{2}_{2^{*}}+\int v_{n,+}^{2^{*}}|\nabla\eta_{x}|^{2}dx\right),
$$
which leads to 
$$
|w_{L,n}|^{2}_{2^{*}}\leq C\beta_{0}^{2}\left(1+||\nabla\eta_{x}||_{\infty}\right)\int v_{n,+}^{2^{*}}dx.
$$
By using Fatou's Lemma for $L\rightarrow+\infty$, we obtain
$$\left(\int_{B_{3R}(x)}v_{n,+}^{\frac{{2^{*}}^{2}}{2}}dx\right)^{\frac{2}{2^{*}}}\leq C\beta_{0}^{2}\int v_{n,+}^{2^{*}}dx$$
for all $n\in\N$ and for all $x\in\R^{N}$. This proves Claim \ref{afirmacaoregularidade}.

In what follows, we fix $R>0$ as in Claim \ref{afirmacaoregularidade}, $r_{m}:=\frac{2R}{2^{m}}$, 
$$
t:=\frac{{2^{*}}^{2}}{2(2^{*}-2)} \quad \mbox{and} \quad  \chi:=\frac{2^{*}(t-1)}{2t}>1.
$$

\begin{claim}
Consider $\beta>1$ arbitrary such that $v_{n,+}\in L^{\beta\frac{2^{*}}{\chi}}(B_{R+r_{m}}(x))$ for all $n\in\N$ and for some $m\in\N$. Then
\begin{equation}\label{desigualdade7}
|v_{n,+}|_{L^{2^{*}\beta}(B_{R+r_{m+1}}(x))}\leq C^{1/\beta}\beta^{1/2\beta}(1+4^{m})^{1/2\beta}|v_{n,+}|_{L^{2^{*}\frac{\beta}{\chi}}(B_{R+r_{m}}(x))}
\end{equation}
where $C>0$ is independently of $n,m\in\N$, $\beta>1$ and $x\in\R^{N}$.
\end{claim}
In fact, since $2^{*}\frac{\beta}{\chi}=\beta\frac{2t}{t-1}$, $v_{n,+}\in L^{\frac{2\beta t}{t-1}}(B_{R+r_{m}}(x))$ for all $n\in\N$. Consider $\eta_{x,m}\in C^{\infty}_{c}(\R^{N},[0,1])$ such that $\eta_{x,m}\equiv1$ in $B_{R+r_{m+1}}(x)$, $\eta_{x,m}\equiv0$ in $\R^{N}\setminus B_{R+r_{m}}(x)$ and $|\eta_{x,m}|_{\infty}<\frac{2}{r_{m+1}}$. Using $\eta=\eta_{x,m}$ in (\ref{desigualdade6}),  
$$|w_{L,n}|_{2^{*}}^{2}\leq C\beta^{2}\left(\int_{B_{R+r_{m}}(x)}|v_{n,+}|^{2\beta}dx+\int_{B_{R+r_{m}}(x)}v_{n,+}^{2^{*}-2}v_{n,+}^{2\beta}dx+\right.$$
$$\left.+\left(\frac{2}{r_{m+1}}\right)^{2}\int_{B_{R+r_{m}}(x)}v_{n,+}^{2\beta}dx\right)\leq C\beta^{2}\left((1+4^{m})\int_{B_{R+r_{m}}(x)}v_{n,+}^{2\beta}dx+\right.$$
$$\left.+\int_{B_{R+r_{m}}(x)}v_{n,+}^{2^{*}-2}v_{n,+}^{2\beta}dx\right)\leq C\beta^{2}\left((1+4^{m})\left(\int_{B_{3R}(0)}1dx\right)^{1/t}.\right.$$
$$.\left.\left(\int_{B_{R+r_{m}}(x)}v_{n,+}^{2\beta t/(t-1)}dx\right)^{(t-1)/t}+\left(\int_{B_{3R}(x)}v_{n,+}^{(2^{*}-2)t}dx\right)^{1/t}.\right.$$
$$\left.\left(\int_{B_{R+r_{m}}(x)}v_{n,+}^{2\beta t/(t-1)}dx\right)^{(t-1)/t}\right)\leq$$
$$\leq C\beta^{2}\left((1+4^{m})\left(\int_{B_{R+r_{m}}(x)}v_{n,+}^{2\beta t/(t-1)}dx\right)^{(t-1)/t}\right).$$
Thus
$$
|w_{L,n}|^{2}_{2^{*}}\leq C\beta^{2}(1+4^{m})|v_{n,+}|_{L^{2\beta t/(t-1)}(B_{R+r_{m}}(x))}^{2\beta}.
$$
Applying Fatou's Lemma as $L\rightarrow+\infty$ we get (\ref{desigualdade7}).
Consequently, by induction, 
\begin{equation}
|v_{n,+}|_{L^{2^{*}\chi^{m}}(B_{R+r_{m+1}}(x))}\leq C^{\sum_{i=1}^{m}\frac{1}{\chi^{i}}}\chi^{\sum_{i=1}^{m}\frac{i}{2\chi^{i}}}\prod_{i=1}^{m}(1+4^{i})^{\frac{1}{2\chi^{i}}}|v_{n,+}|_{L^{2^{*}}(B_{2R}(x))}
\end{equation}
Since $\left(\sum_{i=1}^{m}\frac{1}{\chi^{i}}\right)_{m}$ and $\left(\sum_{i=1}^{m}\frac{i}{\chi^{i}}\right)_{m}$ are convergent because $\chi>1$, and that 
$$
\prod_{i=1}^{m}(1+4^{i})^{\frac{1}{2\chi^{i}}}=4^{\sum_{i=1}^{m}\frac{log_{4}(1+4^{i})}{2\chi^{i}}}\leq4^{\sum_{i=1}^{m}\frac{log_{4}(4^{i+1})}{2\chi^{i}}}=4^{\sum_{i=1}^{m}\frac{i+1}{2\chi^{i}}},
$$
there exists $C>0$ independently of $n,m\in\N$ and $x\in\R^{N}$ such that
$$
|v_{n,+}|_{L^{2^{*}\chi^{m}}(B_{R}(x))}\leq C|v_{n,+}|_{L^{2^{*}}(B_{2R}(x))}.
$$
Now (\ref{regularidade}) follows by taking the limit of $m \to +\infty$.  
\end{proof}

\vspace{0.5 cm}

\begin{corollary}\label{corolariolimitacaozn}
For each  $\delta>0$ there exist $R>0$ such that $|v_{n}(x)|\leq \delta$ for all $x\in\R^{N}\setminus B_{R}(0)$ and $n \in \mathbb{N}$.
\end{corollary}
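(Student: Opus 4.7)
The plan is to combine the local $L^{\infty}$-bound from Lemma \ref{Lema2.13} with the strong convergence $v_n \to v$ in $L^{2^{*}}(\R^{N})$ established just before the statement. The latter convergence implies that the sequence $(|v_n|^{2^{*}})$ is uniformly integrable at infinity, which is exactly what is needed to make the $L^{2^{*}}$-norm on balls $B_{2R_0}(x)$ small, uniformly in $n$, once $|x|$ is large.

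In more detail: let $R_0 > 0$ and $C > 0$ be the constants furnished by Lemma \ref{Lema2.13}, so that for every $x\in\R^{N}$ and every $n\in\N$,
$$
|v_n(x)|\leq |v_n|_{L^{\infty}(B_{R_0}(x))} \leq C\,|v_n|_{L^{2^{*}}(B_{2R_0}(x))}.
$$
Fix $\delta>0$. Since $v_n\to v$ in $L^{2^{*}}(\R^{N})$ and $v\in L^{2^{*}}(\R^{N})$, a standard equi-integrability argument at infinity gives some $M>0$ such that
$$
\int_{\R^{N}\setminus B_{M}(0)} |v_n|^{2^{*}}\,dx < \left(\frac{\delta}{C}\right)^{2^{*}}, \qquad \forall\, n\in\N.
$$
To see this, first pick $M_0$ with $\int_{\R^{N}\setminus B_{M_0}(0)} |v|^{2^{*}}\,dx < (\delta/2C)^{2^{*}}$; then choose $n_0$ such that $|v_n - v|_{2^{*}} < \delta/(4C)$ for $n\geq n_0$, so that the bound with $M=M_0$ holds for these $n$; finally, enlarge $M_0$ if necessary to also accommodate each of the finitely many remaining indices $n<n_0$.

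Now set $R := M + 2R_0$. For every $x\in\R^{N}\setminus B_R(0)$ one has $B_{2R_0}(x)\subset\R^{N}\setminus B_M(0)$, hence
$$
|v_n|_{L^{2^{*}}(B_{2R_0}(x))}^{2^{*}} \leq \int_{\R^{N}\setminus B_M(0)}|v_n|^{2^{*}}dx < \left(\frac{\delta}{C}\right)^{2^{*}}, \qquad \forall\, n\in\N,
$$
which combined with the pointwise estimate above yields $|v_n(x)|\leq \delta$ for all $x\in\R^{N}\setminus B_R(0)$ and all $n\in\N$. No genuine obstacle arises beyond checking the uniform equi-integrability, so the argument is essentially a direct bootstrap from Lemma \ref{Lema2.13} and the $L^{2^{*}}$-convergence of $(v_n)$.
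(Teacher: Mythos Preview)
Your proof is correct and follows exactly the same approach as the paper: combine the pointwise estimate $|v_n|_{L^{\infty}(B_{R_0}(x))}\leq C|v_n|_{L^{2^{*}}(B_{2R_0}(x))}$ from Lemma~\ref{Lema2.13} with the strong convergence $v_n\to v$ in $L^{2^{*}}(\R^{N})$. The paper merely states that these two facts together yield the result, whereas you have written out the equi-integrability-at-infinity step in full detail, so your argument is simply a fleshed-out version of the paper's two-line sketch.
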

\begin{proof} By Lemma \ref{Lema2.13}, 
$$
|v_{n}|_{L^{\infty}(B_{R}(x))}\leq C|v_{n}|_{L^{2^{*}}(B_{2R}(x))}, \quad \mbox{for all} \quad n\in\N \quad \mbox{and} \quad x\in\R^{N}.	
$$
This fact combined with the limit $v_n \to v$ in $L^{2^{*}}(\R^{N})$ proves the result.
\end{proof}

\vspace{0.5 cm}

\noindent\textbf{Concentration of the solutions:}
\\

As $v\neq0$, we must have $|v_{n}|_{L^{\infty}(\R^{N})}\not\rightarrow0$. Hence, we can assume that $|v_{n} |_{L^{\infty}(\R^{N})}>\delta$ for any $\delta>0$ and $n \in \mathbb{N}$. In what follows, we fix $z_{n}\in\R^{N}$ verifying 
$$
|v_{n}(z_{n})|=\max_{x\in\R^{N}}|v_{n}(x)|.
$$
Since $v_{n}(x)=u_{n}(x+y_{n})$, the point $x_{n}:=z_{n}+y_{n}$ satisfies
$$
|u_{n}(x_{n})|=\max_{x\in\R^{N}}|u_{n}(x)|.
$$
From Corollary \ref{corolariolimitacaozn}, $(z_{n})$ is bounded in $\R^N$, then   
$$
\epsilon_{n}x_{n}=\epsilon_{n}z_{n}+\epsilon_{n}y_{n}\rightarrow z\in\mathcal{A}
$$
and 
$$
\lim_{n}A(\epsilon_{n}x_{n})=A(z)=A(0).
$$
\section{The case $N=2$.}

In this section we will consider the case where $f$ has an exponential critical growth. For this type of function, it is well  known that Trundiger-Moser type inequalities are key points to apply variational methods. In the present paper we will use a Trudinger-Moser type inequality for whole $\mathbb{R}^{2}$ due to Cao \cite{Cao} ( see also Ruf \cite{Ruf} ).

\begin{lemma} \noindent {\bf (Trudinger-Moser inequality for unbounded domains)}\label{Cao}
	For all $ u \in H^{1}(\mathbb{R}^{2})$, we have
	$$
	\int 
	\left(e^{\alpha\left|u\right|^{2}}-1 \right)dx
	< \infty,\,\,\,\, \mbox{ for every }\,\,\alpha >0.
	$$
	Moreover, if $\left| \nabla
	u\right|^{2}_{2}\leq 1,\,\left|u\right|_{2} \leq M < \infty $ and
	$\alpha < 4 \pi$, then there
	exists a positive constant $C=C(M,\alpha)$ such that
	$$
	\int \left(e^{\alpha\left|u\right|^{2}}-1\right)dx \leq C.
	$$
\end{lemma}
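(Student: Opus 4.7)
My plan is to establish the uniform estimate (the $4\pi$--bound) first and then deduce the finiteness statement for arbitrary $\alpha>0$ by an approximation argument. Assume $u\in H^{1}(\R^{2})$ with $|\nabla u|_{2}^{2}\le 1$, $|u|_{2}\le M$, and $\alpha<4\pi$. I would first replace $u$ by its Schwarz symmetrization $|u|^{*}$, which preserves every $L^{p}$--norm (hence $\int(e^{\alpha u^{2}}-1)\,dx$) and cannot increase $|\nabla u|_{2}$; thus it is enough to handle $u\ge 0$ radial and nonincreasing. The monotonicity then immediately yields the pointwise decay
$$
\pi R^{2}\,u(R)^{2}\le \int_{B_{R}(0)}u^{2}\,dx\le M^{2},\qquad\text{hence}\qquad u(R)\le\frac{M}{\sqrt{\pi}\,R},\quad \forall\, R>0,
$$
which will control the tail of $u$.

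Next I would split $\int_{\R^{2}}=\int_{B_{R}(0)}+\int_{\R^{2}\setminus B_{R}(0)}$ for a radius $R=R(M,\alpha)$ chosen as follows: pick $\var>0$ with $\alpha(1+\var)<4\pi$, and let $R$ be so large that $\alpha(1+1/\var)\,u(R)^{2}\le 1$. Set $v:=(u-u(R))^{+}$, extended by $0$ outside $B_{R}(0)$, so that $v\in H^{1}_{0}(B_{R}(0))$ with $|\nabla v|_{2}\le 1$. From the elementary Young-type inequality $u^{2}\le(1+\var)v^{2}+(1+1/\var)u(R)^{2}$ on $B_{R}(0)$,
$$
\int_{B_{R}(0)}(e^{\alpha u^{2}}-1)\,dx\le e^{\alpha(1+1/\var)\,u(R)^{2}}\int_{B_{R}(0)}e^{\alpha(1+\var)v^{2}}\,dx,
$$
and the classical bounded-domain Moser--Trudinger inequality (valid because $\alpha(1+\var)<4\pi$) bounds the right-hand side by a constant depending only on $\alpha,\var,R$. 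On the exterior one has $0\le u\le u(R)$, so expanding in Taylor series and using $u^{2k}\le u(R)^{2(k-1)}u^{2}$ gives
$$
\int_{\R^{2}\setminus B_{R}(0)}(e^{\alpha u^{2}}-1)\,dx=\sum_{k\ge 1}\frac{\alpha^{k}}{k!}\int_{\R^{2}\setminus B_{R}(0)}u^{2k}\,dx\le\frac{M^{2}}{u(R)^{2}}\bigl(e^{\alpha u(R)^{2}}-1\bigr)\le\alpha M^{2}e^{\alpha u(R)^{2}}.
$$
Adding the two contributions produces a constant $C=C(M,\alpha)$, proving the second assertion.

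For the first assertion, given an arbitrary $u\in H^{1}(\R^{2})$ and $\alpha>0$, I would approximate $u$ by $\phi\in C^{\infty}_{c}(\R^{2})$ so that $\psi:=u-\phi$ satisfies $\alpha\|\nabla\psi\|_{2}^{2}<4\pi$, and decompose $\R^{2}=\mathrm{supp}\,\phi\cup(\R^{2}\setminus\mathrm{supp}\,\phi)$. On the unbounded piece $u=\psi$, and rescaling $w:=\psi/\|\nabla\psi\|_{2}$ (with $\alpha':=\alpha\|\nabla\psi\|_{2}^{2}<4\pi$) reduces matters to the uniform bound just proved; on the bounded piece $\mathrm{supp}\,\phi$, the inequality $u^{2}\le 2\phi^{2}+2\psi^{2}$, H\"older, and Trudinger's classical local estimate (for any exponent, applied on the bounded domain $\mathrm{supp}\,\phi$) dispose of the remaining term. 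The main obstacle is the sharp bookkeeping in the inner estimate on $B_{R}(0)$: one must divide the $4\pi$--budget between the factor $(1+\var)$ in the Young-type inequality and the exponent in the classical Moser--Trudinger bound, while simultaneously ensuring that the residual $\alpha(1+1/\var)u(R)^{2}$ stays bounded via the symmetric-rearrangement decay $u(R)\le M/(\sqrt{\pi}\,R)$. Balancing the two competing choices of $R$ and $\var$ in terms of $\alpha$ and $M$ is precisely what expresses the sharp threshold $4\pi$.
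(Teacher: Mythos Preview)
The paper does not actually prove this lemma: it is quoted as a known result due to Cao \cite{Cao} (see also Ruf \cite{Ruf}), and immediately after the statement the authors simply refer the reader to the literature for related inequalities. So there is no ``paper's own proof'' to compare against.

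Your argument is correct and is, in fact, essentially the classical proof. The key steps---Schwarz symmetrization (using P\'olya--Szeg\H{o} to reduce to radial nonincreasing $u$ without increasing $|\nabla u|_2$), the pointwise decay $u(R)\le M/(\sqrt{\pi}\,R)$ obtained from $|u|_2\le M$, the splitting $\R^2=B_R\cup B_R^{c}$, the subtraction $v=(u-u(R))^{+}\in H^1_0(B_R)$ combined with the elementary inequality $(a+b)^2\le(1+\varepsilon)a^2+(1+1/\varepsilon)b^2$ so as to invoke the bounded-domain Moser--Trudinger inequality with exponent $\alpha(1+\varepsilon)<4\pi$, and the Taylor/series bound on the exterior using $u^{2k}\le u(R)^{2(k-1)}u^{2}$---are exactly the ingredients in Cao's original paper. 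The deduction of the qualitative statement for arbitrary $\alpha>0$ from the quantitative one, via density of $C^\infty_c$ and the local Trudinger embedding on the compact set $\mathrm{supp}\,\phi$, is also standard and correctly outlined. One very small cosmetic point: when you appeal to ``Trudinger's classical local estimate'' on $\mathrm{supp}\,\phi$, it is cleaner to enlarge $\mathrm{supp}\,\phi$ to a ball so that the domain is Lipschitz; this changes nothing in the argument.
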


The reader can find other Trundiger-Moser type inequalities in \cite{CT},  \cite{masmoudi}, \cite{ish}, \cite{MS} and references therein

\vspace{0.5 cm}

As in the previous section, firstly we need to study the autonomous case.

\subsection{A result involving the autonomous problem.}
We consider the problem
$$
\left\{\begin{array}{l}
-\Delta u+V(x)u=\lambda f(u), \quad  x \in \R^{2}, \\ 
u\in H^{1}(\R^{2}),
\end{array}\right.\eqno{(AP)_{\lambda}^{exp}}
$$
where $f:\R\rightarrow\R$ satisfies $(f_1)-(f_5)$. Associated with this problem, we have the energy function $J_{\lambda}:H^{1}(\R^{2})\rightarrow\R$ given by
$$
J_{\lambda}(u)=\frac{1}{2}||u^{+}||^{2}-\frac{1}{2}||u^{-}||^{2}-\lambda\int F(u)dx.
$$
It is well known that $J_{\lambda}\in C^{1}(H^{1}(\R^{2}),\R)$ with 
$$
J_{\lambda}'(u)v=B(u,v)-\lambda\int f(u)vdx, \quad \forall u,v \in H^{1}(\R^2). 
$$
In the sequel, 
$$
\mathcal{N}_{\lambda}=\{u\in H^{1}(\R^{2})\setminus E^{-}\ ;\ J_{\lambda}'(u)u=J_{\lambda}'(u)v=0, \forall\ v\in E^{-}\}
$$
and
$$
d_{\lambda}=\inf_{\mathcal{N}_{\lambda}}J_{\lambda}.
$$
In \cite{AG}, Alves and Germano have proved that there exists a constant $\tau_0>0$ such that $(AP)_{\lambda}^{exp}$ has a ground state solution if 
\begin{equation}\label{condicaoF}
\lambda \geq A(0) \quad \mbox{and} \quad \tau \geq \tau_0,
\end{equation}
where $\tau$ was fixed in $(f_5)$. More precisely, it has been shown  that for $\lambda\geq A(0)$ and $\tau \geq \tau_0$, there exists $u_{\lambda}\in H^{1}(\R^{2})$ verifying
$$
J_{\lambda}'(u_{\lambda})=0 \text{ and }J_{\lambda}(u_{\lambda})=d_{\lambda}
$$
with
\begin{equation}\label{limitacaod2}
d_{\lambda}<\frac{\widetilde{A}^{2}}{2}
\end{equation}
where $\widetilde{A}<1/a$ and $a$ was given in (\ref{equivalente}). This restriction on $\tau$ has been mentioned  in Theorem \ref{T1}, and it will be assume in whole this section.

Moreover, the authors have proved that for all $u\in H^{1}(\R^{2})\setminus E^{-}$ the set $\mathcal{N}_{\lambda}\cap\widehat{E}(u)$ is a singleton set and the element of this set is the unique global maximum of $J_{\lambda}|_{\widehat{E}(u)}$, which means precisely that there exist uniquely $t^{*}\geq0$ and $v^{*}\in E^{-}$ such that
$$
J_{\lambda}(t^{*}u+v^{*})=\max_{w\in\widehat{E}(u)}J_{\lambda}(w)\hspace{0.5cm}\text{ and }\hspace{0.5cm}\{t^{*}u+v^{*}\}=\mathcal{N}_{\lambda}\cap\widehat{E}(u)
$$

As in the case $N\geq3$, we begin by studying the behavior of the function $\lambda\mapsto d_{\lambda}$.  

\begin{proposition}\label{continuidadec2}
The function $\lambda\mapsto d_{\lambda}$ is decreasing and continuous on $[A_0,+\infty)$.
\end{proposition}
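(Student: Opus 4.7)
By \cite[Proposition 2.3]{AG2} (applied to the exponential setting), the function $\lambda \mapsto d_\lambda$ is decreasing and left-continuous, i.e., $\lambda_n \uparrow \lambda$ implies $d_{\lambda_n} \to d_\lambda$. So the plan is to prove right-continuity: given $\lambda_n \downarrow \lambda$ in $[A_0,+\infty)$, show $d_\lambda \leq \liminf_n d_{\lambda_n}$ (the reverse inequality is immediate from monotonicity). The structure will mirror Proposition \ref{continuidadec}, with the two places where the Sobolev critical exponent was exploited replaced by Trudinger–Moser estimates (Lemma \ref{Cao}) together with the assumption \eqref{limitacaod2}. For each $n$, let $u_n$ be a ground state for $(AP)_{\lambda_n}^{exp}$ (supplied by \cite{AG}, using $\tau \geq \tau_0$), and pick $t_n \geq 0$, $v_n \in E^-$ with $J_\lambda(t_n u_n + v_n) = \max_{\widehat{E}(u_n)} J_\lambda \geq d_\lambda$.

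First, I prove $(u_n)$ is bounded in $H^1(\R^2)$. From $(f_3)$,
$$\left(\tfrac{1}{2}-\tfrac{1}{\theta}\right)\int f(u_n)u_n\,dx \leq \tfrac{1}{\lambda_n}\bigl(J_{\lambda_n}(u_n)-\tfrac{1}{2}J'_{\lambda_n}(u_n)u_n\bigr)=\tfrac{d_{\lambda_n}}{\lambda_n}\leq \tfrac{d_\lambda}{\lambda},$$
so $\int f(u_n)u_n\,dx$ is bounded. To convert this into control of $\|u_n\|$, I split $f(u_n)u_n^+$ on $\{|u_n|\leq 1\}$, where it is dominated by a multiple of $|u_n||u_n^+|$, and on $\{|u_n|>1\}$, where Hölder together with Lemma \ref{Cao} (after normalizing $u_n/\|u_n\|$ so that $\alpha\|u_n\|^2$ stays subcritical — which is possible up to a tail estimate since the exponential mass is absorbed into $\int f(u_n)u_n\,dx$) yields a bound linear in $\|u_n^+\|$. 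Assuming $\|u_n\|\to+\infty$, dividing $J'_{\lambda_n}(u_n)u_n^+=0$ by $\|u_n\|^2$ forces $\|u_n^+\|/\|u_n\|\to 0$, contradicting $\|u_n^-\|\leq\|u_n^+\|$ from $u_n\in\mathcal{N}_{\lambda_n}$.

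Next, I obtain non-vanishing: there exist $(y_n)\subset\Z^2$ and $r,\eta>0$ with $\int_{B_r(y_n)}|u_n|^q\,dx>\eta$. Otherwise \cite[Lemma 2.1]{RWW} gives $u_n\to 0$ in $L^p(\R^2)$ for every $p>2$, which combined with the splitting above forces $\int f(u_n)u_n^+\,dx\to 0$, hence $\|u_n^+\|\to 0$, contradicting $\|u_n^+\|^2\geq 2d_{\lambda_n}\geq 2d_{\lambda_1}>0$. Setting $\widetilde{u}_n(x):=u_n(x+y_n)$ and passing to a subsequence $\widetilde{u}_n\rightharpoonup u$ in $H^1(\R^2)$, the non-vanishing bound upgraded by Rellich on $B_r(0)$ gives $u\neq 0$. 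To force $u^+\neq 0$, I argue as in Claim \ref{C1}: if $u^+=0$, then $(f_3)$ gives $\int f(u)u^-\,dx=\int f(u)u\,dx\geq 0$, whereas passing to the limit in $J'_{\lambda_n}(\widetilde{u}_n)u^-=0$ (using Trudinger–Moser and compactness on bounded sets to pass the nonlinear term to the limit) yields $-\|u^-\|^2=\lambda\int f(u)u^-\,dx\geq 0$, hence $u=0$, a contradiction.

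Now the bound for $(t_nu_n+v_n)$: because $\widetilde{u}_n^+\rightharpoonup u^+\neq 0$, the set $\mathcal{V}=\{\widetilde{u}_n^+\}$ fulfils the hypotheses of \cite[Lemma 2.2]{AG2}, producing $R>0$ with $J_\lambda\leq 0$ on $\widehat{E}(w)\setminus B_R(0)$ for every $w\in\mathcal{V}$. Combined with $J_\lambda(t_n\widetilde{u}_n+\widetilde{v}_n)\geq d_\lambda>0$ this yields $\|t_nu_n+v_n\|=\|t_n\widetilde{u}_n+\widetilde{v}_n\|\leq R$. Then \eqref{limitacaod2} with $\widetilde{A}<1/a$ makes the $H^1$-norm of $t_nu_n+v_n$ stay in the Trudinger–Moser subcritical zone, so $\int F(t_nu_n+v_n)\,dx$ is uniformly bounded. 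Finally
$$d_\lambda \leq J_\lambda(t_nu_n+v_n) = J_{\lambda_n}(t_nu_n+v_n) + (\lambda_n-\lambda)\int F(t_nu_n+v_n)\,dx \leq d_{\lambda_n} + o_n(1),$$
which closes the argument. The main obstacle is the first step: unlike the Sobolev-critical case, Trudinger–Moser requires an \emph{a priori} bound on the $H^1$-norm before it can be invoked, so the estimates on $\int f(u_n)u_n\,dx$ must be bootstrapped carefully against the uniform bound $d_{\lambda_n}\leq d_{\lambda_1}$ and the smallness embodied in \eqref{limitacaod2}.
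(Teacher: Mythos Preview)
Your overall architecture is right, and the boundedness step you flag as the ``main obstacle'' is actually not the problem: the paper simply invokes \cite[Lemma 3.11]{AG} there, and you may do the same. The genuine gap is in your non-vanishing argument. You claim that if $u_n\to 0$ in $L^p(\R^2)$ for all $p>2$, then the splitting forces $\int f(u_n)u_n^+\,dx\to 0$. But the ``splitting above'' relied on Lemma~\ref{Cao}, which gives a \emph{uniform} bound on $\int(e^{\alpha u_n^2}-1)\,dx$ only when $|\nabla u_n|_2\leq 1$ (equivalently, when $\alpha\|u_n\|_{H^1}^2$ is subcritical). After the boundedness step you only know $\|u_n\|_{H^1}\leq C$ for some $C$ that need not be small, so the H\"older--Trudinger--Moser estimate you describe on $\{|u_n|>1\}$ is unavailable, and there is no analogue of the Sobolev-critical relation $|f(t)|^{s}\leq k\,tf(t)$ to rescue you. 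In short, the exponential nonlinearity cannot be controlled on $u_n$ itself.

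The paper circumvents this by testing with the rescaled element $w_n:=\widetilde{A}\,u_n^{+}/\|u_n^{+}\|$, which lies in $\widehat{E}(u_n)$ and satisfies $\|w_n\|_{H^1}\leq a\widetilde{A}<1$ by construction, so Lemma~\ref{Cao} \emph{does} apply to $w_n$ uniformly. If $u_n^{+}\to 0$ in $L^p$ for all $p>2$, one then gets $\int F(w_n)\,dx\to 0$ (this is \cite[Proposition~2.3]{AOM}), whence the maximality of $u_n$ on $\widehat{E}(u_n)$ gives $d_\lambda\geq d_{\lambda_n}=J_{\lambda_n}(u_n)\geq J_{\lambda_n}(w_n)\to \widetilde{A}^{2}/2$, contradicting \eqref{limitacaod2}. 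Note this yields non-vanishing of $u_n^{+}$ directly (so $\widetilde{u}_n^{+}\rightharpoonup u\neq 0$ without any Claim~\ref{C1}-type detour). One further correction: in your final step, the bound $\|t_nu_n+v_n\|\leq R$ coming from \cite[Lemma~2.2]{AG2} does \emph{not} place $t_nu_n+v_n$ in the Trudinger--Moser subcritical zone; the boundedness of $\int F(t_nu_n+v_n)\,dx$ follows instead from $F\geq 0$ and $J_\lambda(t_nu_n+v_n)\geq d_\lambda>0$, which give $\lambda\int F(t_nu_n+v_n)\,dx\leq \tfrac{1}{2}\|t_nu_n^{+}\|^2\leq R^2/2$.
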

\begin{proof}
The monotonicity of $\lambda\mapsto d_{\lambda}$ and some details of the proof are analogous to Proposition \ref{continuidadec} and \cite[Proposition 2.3]{AG2}. In order to get the limit $\lim_{n} d_{\lambda_n}=d_\lambda$, it suffices to consider $\lambda_{1}\geq\lambda_{2}\geq...\geq\lambda_{n}\rightarrow\lambda$. Let $u_{n}$ be a ground state solution of the problem $(AP)_{\lambda_n}^{exp}$. Let $t_{n}\geq0$ and $v_{n}\in E^{-}$ such that $t_{n}u_{n}+v_{n}\in\mathcal{N}_{\lambda}$. Consequently
$$
J_{\lambda}(t_{n}u_{n}+v_{n})=\max_{\widehat{E}(u_{n})}J_{\lambda}\geq d_{\lambda},
$$
and the same ideas explored in Proposition \ref{continuidadec} remain valid to show that $\left(\int f(u_{n})u_{n}dx\right)$ is bounded in $\R$. Now, arguing as in \cite[Lemma 3.11]{AG}, we see that  $(u_{n})$ is bounded in $H^{1}(\R^{2})$.

Note that there exist $(y_{n})$ in $\Z^{2}$, $r,\eta>0$ such that
\begin{equation}\label{lionsn2}
\int_{B_{r}(y_{n})}|u_{n}^{+}|^{2}dx>\eta, \quad \forall n \in \mathbb{N}.
\end{equation}
Otherwise, $u_{n}^{+}\rightarrow0$ in $L^{p}(\R^{2})$ for all $p>2$. Defining $w_{n}(x):=\widetilde{A}\frac{u_{n}^{+}(x)}{||u_{n}||}$ where $\widetilde{A}$ was given in (\ref{limitacaod2}), we have 
$$
||w_{n}||_{H^{1}(\R^{2})}\leq\widetilde{A}a<1, \quad \forall n \in \mathbb{N}.
$$
This fact permits to repeat the same approach found in \cite[Proposition 2.3]{AOM} to get  the limit
$$
\int F(w_{n})dx\rightarrow 0.
$$
As $w_{n}\in\widehat{E}(u_{n})$ and $u_{n}\in\mathcal{N}_{\lambda_{n}}$, it follows that 
$$
d_{\lambda}\geq d_{\lambda_{n}}=J_{\lambda_{n}}(u_{n})\geq J_{\lambda_{n}}(w_{n})=\frac{\widetilde{A}}{2}-\lambda_{n}\int F(w_{n})dx.
$$
Passing to the limit as $n\rightarrow+\infty$ we obtain $d_{\lambda}\geq\widetilde{A}/2$, which contradicts (\ref{limitacaod2}), and (\ref{lionsn2}) holds. If $\widetilde{u}_{n}(x):=u_{n}(x+y_{n})$, then $\widetilde{u}_{n}^{+}(x):=u_{n}^{+}(x+y_{n})$,  and by (\ref{lionsn2}), $\widetilde{u}_{n}^{+}\rightharpoonup u\neq0$. This implies that ${\cal V}:=\{\widetilde{u}_{n}^{+}\}_{n\in\N}$ satisfies $0\notin\overline{{\cal V}}^{\sigma(H^{1}(\R^{2}),H^{1}(\R^{2})')}$ and ${\cal V}$ is bounded in $H^{1}(\R^{2})$. We proceed as in Proposition \ref{continuidadec} to conclude $(t_{n}u_{n}+v_{n})$ is bounded and $d_{\lambda_{n}}\leq d_{\lambda}+o_{n}$. This finishes the proof. 
\end{proof}

\subsection{Existence of ground state for problem $(P)_\epsilon$.}

The three first results this section follow as in the case $N \geq 3$, then we will omit their proofs.

\begin{lemma}
The limit $\lim_{\epsilon\rightarrow0}c_{\epsilon}=c_{0}$ holds. Moreover, if $w_{0}$ is a ground state solution of the problem $(P)_{0}$ and let $t_{\epsilon}\geq0$ and $v_{\epsilon}\in E^{-}$ such that $t_{\epsilon}w_{0}+v_{\epsilon}\in\M_{\epsilon}$. Then
	$$t_{\epsilon}\rightarrow1\ \text{ and }\ v_{\epsilon}\rightarrow 0$$
	as $\epsilon\rightarrow0$. 
\end{lemma}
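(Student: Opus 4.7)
The plan is to follow the blueprint of Lemma \ref{Lema1} (proved in \cite[Lemmas 3.1 and 3.3]{AG2}), adapting the arguments to the exponential critical growth setting via the Trudinger--Moser inequality (Lemma \ref{Cao}).

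The first step is the lower bound $c_\epsilon \ge c_0$, which requires no ground state for $(P)_\epsilon$. Since $A(\epsilon x) \le A(0)$ and $F \ge 0$ (from $(f_3)$), one has $I_\epsilon(u) \ge I_0(u)$ for every $u \in H^1(\R^2)$. Given any $u \in \M_\epsilon$, there exist unique $s \ge 0$ and $w \in E^-$ with $su + w \in \M_0 \cap \widehat{E}(u)$; using that $u$ itself is the unique maximum of $I_\epsilon$ on $\widehat{E}(u)$,
\[
I_\epsilon(u) \ge I_\epsilon(su+w) \ge I_0(su+w) \ge c_0,
\]
so $c_\epsilon \ge c_0$ by taking the infimum.

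For the upper bound and for the convergences $t_\epsilon \to 1$, $v_\epsilon \to 0$, the first task is to show that $(t_\epsilon)$ and $(v_\epsilon)$ are bounded. By $(f_3)$ together with $A_0 > 0$, the restriction of $I_\epsilon$ to $\widehat{E}(w_0)$ satisfies $\sup_\epsilon I_\epsilon(tw_0+v) \to -\infty$ as $|t|+\|v\| \to \infty$, and combined with $I_\epsilon(t_\epsilon w_0 + v_\epsilon) = \max_{\widehat{E}(w_0)} I_\epsilon \ge I_\epsilon(0) = 0$ this yields boundedness. Extracting a subsequence with $t_\epsilon \to t_0$ and $v_\epsilon \rightharpoonup v_0$ in $E^-$, I would pass to the limit in the Euler equations $I'_\epsilon(t_\epsilon w_0 + v_\epsilon)(w_0) = 0$ and $I'_\epsilon(t_\epsilon w_0 + v_\epsilon)(v) = 0$ for all $v \in E^-$, obtaining $t_0 w_0 + v_0 \in \M_0 \cap \widehat{E}(w_0) = \{w_0\}$, which forces $t_0 = 1$ and $v_0 = 0$. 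Strong convergence of $v_\epsilon$ to zero is then obtained from
\[
\|v_\epsilon\|^2 = -t_\epsilon \langle w_0^-, v_\epsilon \rangle - \int A(\epsilon x) f(t_\epsilon w_0 + v_\epsilon) v_\epsilon \, dx \to 0,
\]
and dominated convergence gives $I_\epsilon(t_\epsilon w_0 + v_\epsilon) \to I_0(w_0) = c_0$, so $\limsup_\epsilon c_\epsilon \le c_0$.

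The main obstacle is justifying the passage to the limit in integrals of the form $\int A(\epsilon x) f(t_\epsilon w_0 + v_\epsilon) \varphi \, dx$, where $f$ has exponential critical growth. Since $(t_\epsilon w_0 + v_\epsilon)$ is uniformly bounded in $H^1(\R^2)$, Lemma \ref{Cao} provides uniform integrability of $e^{\alpha(t_\epsilon w_0 + v_\epsilon)^2}$ on bounded sets for every $\alpha > 0$; combined with almost-everywhere convergence and the bound $|f(t)| \le \Gamma e^{4\pi t^2}$ from $(f_4)$, one obtains $L^1$ convergence on every ball via Vitali's theorem. The tail outside a large ball is controlled by splitting $f$ according to the sublinear behaviour $(f_1)$ near zero and the exponential bound $(f_4)$ away from zero, and then exploiting H\"older's inequality together with Lemma \ref{Cao} applied to $(t_\epsilon w_0 + v_\epsilon)$ after rescaling to arbitrarily small Dirichlet norm on the tail. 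This same mechanism will be reused in the analogues of Corollary \ref{limitacaoce} and Proposition \ref{sequenciaPS}.
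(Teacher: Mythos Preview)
Your overall strategy---the lower bound $c_\epsilon\ge c_0$ via $A(\epsilon x)\le A(0)$, boundedness of $(t_\epsilon,v_\epsilon)$ from $(f_3)$ and $A_0>0$, and identification of the limit through the uniqueness of $\M_0\cap\widehat{E}(w_0)$---is exactly the route the paper takes (it declares the result ``follows as in the case $N\ge3$'', i.e.\ Lemma \ref{Lema1}, which in turn cites \cite[Lemmas 3.1 and 3.3]{AG2}).

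There is, however, a real gap in how you pass to the limit in the nonlinear term. The claim that ``Lemma \ref{Cao} provides uniform integrability of $e^{\alpha(t_\epsilon w_0+v_\epsilon)^2}$ on bounded sets for every $\alpha>0$'' is false as stated: the uniform bound in Lemma \ref{Cao} holds only under the threshold $\alpha|\nabla u|_2^2<4\pi$, and you have no a priori information forcing $\sup_\epsilon|\nabla(t_\epsilon w_0+v_\epsilon)|_2^2<1$. Since $(f_4)$ involves precisely the exponent $4\pi$, higher integrability of $e^{4\pi(t_\epsilon w_0+v_\epsilon)^2}$ (and hence Vitali) is not available from mere $H^1$-boundedness. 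The same problem infects your tail argument: making the Dirichlet energy of $t_\epsilon w_0+v_\epsilon$ small outside $B_R$ uniformly in $\epsilon$ presupposes strong convergence of $v_\epsilon$, which is what you are trying to prove.

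The mechanism the paper actually relies on in the parallel arguments (see Proposition \ref{continuidadec2} and Claim \ref{Z*}) avoids Trudinger--Moser at this step. From $t_\epsilon w_0+v_\epsilon\in\M_\epsilon$ and its $H^1$-boundedness one obtains
\[
\sup_\epsilon\int f(t_\epsilon w_0+v_\epsilon)(t_\epsilon w_0+v_\epsilon)\,dx<\infty,
\]
and then \cite[Lemma 2.1]{djairo} yields $f(t_\epsilon w_0+v_\epsilon)\to f(t_0 w_0+v_0)$ in $L^1(B)$ for every bounded $B$, which suffices to pass to the limit in $I'_\epsilon(t_\epsilon w_0+v_\epsilon)\varphi=0$ and to recover the remaining convergences.
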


\begin{corollary}
	There exists $\epsilon_{0}>0$ such that 
	$$
	c_{\epsilon}<d_{A_{\infty}}\ \text{ and }\ c_{\epsilon}<\frac{\widetilde{A}^{2}}{2}, \quad \mbox{for all} \quad \epsilon \in (0,\epsilon_{0}).
	$$
\end{corollary}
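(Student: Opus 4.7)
The strategy mirrors exactly the proof of Corollary \ref{limitacaoce} in the $N\geq 3$ case, so the plan is to reduce the statement to a pair of strict inequalities satisfied at $\epsilon=0$ and then pass to $\epsilon$ small by the continuity statement from the previous lemma. First I would identify the level $c_0$: setting $\epsilon=0$ makes the coefficient function $A(\epsilon x)$ collapse to the constant $A(0)$, which is admissible because by hypothesis $A(0)=\sup_{\mathbb{R}^N}A$; hence the functional $I_0$ coincides with $J_{A(0)}$ and $\mathcal{M}_0=\mathcal{N}_{A(0)}$, giving $c_0=d_{A(0)}$.

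Next I would establish the two strict inequalities at $\epsilon=0$. For the first, note that by $(A)$ we have $A_\infty<A(0)$, and Proposition \ref{continuidadec2} asserts that $\lambda\mapsto d_\lambda$ is (strictly) decreasing on $[A_0,+\infty)$; therefore $c_0=d_{A(0)}<d_{A_\infty}$. For the second, I would invoke the bound (\ref{limitacaod2}), which guarantees $d_\lambda<\widetilde{A}^2/2$ for every admissible $\lambda$, so in particular $c_0=d_{A(0)}<\widetilde{A}^2/2$.

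Finally, since the previous lemma provides $\lim_{\epsilon\to 0}c_\epsilon=c_0$, both strict inequalities persist in a neighbourhood of $0$: there is $\epsilon_0>0$ such that for every $\epsilon\in(0,\epsilon_0)$,
$$
c_\epsilon<d_{A_\infty}\quad\text{and}\quad c_\epsilon<\frac{\widetilde{A}^2}{2}.
$$
No real obstacle arises here; the only genuine content was already encoded in Proposition \ref{continuidadec2} (monotonicity of $\lambda\mapsto d_\lambda$) and in the preparatory estimate (\ref{limitacaod2}), both of which are available from earlier in the paper.
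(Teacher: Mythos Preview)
Your proof is correct and follows essentially the same route as the paper: the paper omits the argument here, pointing back to Corollary~\ref{limitacaoce}, whose proof is exactly the one you give---identify $c_0=d_{A(0)}$, use $c_0<d_{A_\infty}$ and the a priori bound \eqref{limitacaod2}, then invoke $\lim_{\epsilon\to 0}c_\epsilon=c_0$. One small remark: Proposition~\ref{continuidadec2} is stated only as ``decreasing'', not ``strictly decreasing''; the strict inequality $d_{A(0)}<d_{A_\infty}$ is nonetheless used in the paper (and follows since a ground state for $J_{A(0)}$ has $\int F(u)>0$, making $J_{\lambda}$ genuinely smaller along $\widehat{E}(u)$ for $\lambda>A_\infty$), so your invocation of it is fine.
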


\begin{proposition}
	There exists a bounded sequence $(u_{n}) \subset \M_{\epsilon}$ such that $(u_{n})$ is $(PS)_{c_{\epsilon}}$ for $I_{\epsilon}$.
\end{proposition}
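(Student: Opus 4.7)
The plan is to implement the Szulkin--Weth reduction scheme, adapted to the exponential critical setting as in \cite[Proposition 2.16]{AG}. Since for each $u\in H^{1}(\R^{2})\setminus E^{-}$ the intersection $\M_\epsilon\cap\widehat{E}(u)$ is a singleton at which $I_\epsilon|_{\widehat{E}(u)}$ attains its unique maximum, I would define the map $\hat{m}_\epsilon\colon E^{+}\setminus\{0\}\to\M_\epsilon$ by $\hat{m}_\epsilon(u):=\M_\epsilon\cap\widehat{E}(u)$ and restrict it to the unit sphere $S^{+}:=\{u\in E^{+}:\|u\|=1\}$, obtaining a homeomorphism $m_\epsilon\colon S^{+}\to\M_\epsilon$. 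With $\Phi_\epsilon:=I_\epsilon\circ m_\epsilon$, standard arguments give $\Phi_\epsilon\in C^{1}(S^{+},\R)$, $\inf_{S^{+}}\Phi_\epsilon=c_\epsilon$, and the gradient identity
$$
\|\Phi'_\epsilon(w)\|_{T_{w}S^{+}}\;=\;\|m_\epsilon(w)^{+}\|\cdot\|I'_\epsilon(m_\epsilon(w))\|_{(H^{1}(\R^{2}))^{*}},\qquad w\in S^{+}.
$$
The same argument as for (\ref{umaislimitadainferiormente}) further yields $\|u^{+}\|^{2}\geq 2c_\epsilon>0$ for every $u\in\M_\epsilon$.

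Because $S^{+}$ is a complete $C^{1,1}$-Finsler manifold and $\Phi_\epsilon\geq c_\epsilon$, I would then apply Ekeland's variational principle to obtain $(w_{n})\subset S^{+}$ with $\Phi_\epsilon(w_{n})\to c_\epsilon$ and $\|\Phi'_\epsilon(w_{n})\|\to 0$. Setting $u_{n}:=m_\epsilon(w_{n})$, the gradient identity and the lower bound $\|u_{n}^{+}\|\geq\sqrt{2c_\epsilon}$ imply $I'_\epsilon(u_{n})\to 0$ in $(H^{1}(\R^{2}))^{*}$ while $I_\epsilon(u_{n})\to c_\epsilon$, so $(u_{n})$ is a $(PS)_{c_\epsilon}$-sequence for $I_\epsilon$. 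To show $(u_{n})$ is bounded, I would exploit that $u_{n}\in\M_\epsilon$ (hence $I'_\epsilon(u_{n})u_{n}=0$) together with $(f_{3})$:
$$
c_\epsilon+o(1)\;=\;I_\epsilon(u_{n})-\tfrac{1}{2}I'_\epsilon(u_{n})u_{n}\;=\;\int A(\epsilon x)\!\left[\tfrac{1}{2}f(u_{n})u_{n}-F(u_{n})\right]dx\;\geq\;\left(\tfrac{1}{2}-\tfrac{1}{\theta}\right)A_{0}\!\int f(u_{n})u_{n}\,dx,
$$
so $\int f(u_{n})u_{n}\,dx$ is uniformly bounded. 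Then from $I'_\epsilon(u_{n})u_{n}^{\pm}=0$ one derives $\|u_{n}^{+}\|^{2}=\int A(\epsilon x)f(u_{n})u_{n}^{+}\,dx$ and $\|u_{n}^{-}\|^{2}=-\int A(\epsilon x)f(u_{n})u_{n}^{-}\,dx$; each integral would be controlled by splitting $f=g+l$ with $g:=f\chi_{[-1,1]}$ and $l:=f\chi_{[-1,1]^{c}}$, estimating the subcritical piece via $|g(t)|^{q/(q-1)}\leq K\,tf(t)$ (which follows from $(f_{1})$ and $(f_{5})$) so that H\"older yields $\int|g(u_{n})u_{n}^{\pm}|\,dx\leq C\|u_{n}^{\pm}\|$, and handling the exponential piece by H\"older combined with Lemma \ref{Cao}.

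The main obstacle will be this last estimate: Cao's Trudinger--Moser inequality requires $\|u_{n}\|<1/a$ (cf.\ (\ref{equivalente})), which is not a priori known. This is exactly where the level-bound $c_\epsilon<\widetilde{A}^{2}/2$, guaranteed by $\tau\geq\tau_{0}$ and (\ref{limitacaod2}), enters: it is used to propagate an a priori $H^{1}$-control on $u_{n}^{\pm}$ staying below the Moser--Trudinger threshold, thereby closing the algebraic inequality $\|u_{n}^{\pm}\|^{2}\leq C(1+\|u_{n}^{\pm}\|)$ and yielding the desired boundedness. This bootstrap---not the construction of the PS sequence itself---is the delicate part of the argument, and it is precisely where the restriction $\tau\geq\tau_{0}$ plays its role at the level of the $(PS)_{c_\epsilon}$ sequence.
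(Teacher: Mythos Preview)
Your proposal is correct and follows the same Szulkin--Weth reduction scheme that the paper invokes; the paper itself omits the proof entirely, stating that it ``follows as in the case $N\geq 3$,'' which in turn defers to \cite[Proposition 2.16]{AG}. Your identification of the delicate point---that the boundedness of the $(PS)_{c_\epsilon}$-sequence in the exponential setting requires the level bound $c_\epsilon<\widetilde{A}^{2}/2$ to stay below the Trudinger--Moser threshold---is accurate, and for precisely this step the paper refers to \cite[Lemma 3.11]{AG}.
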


Now we are ready to prove the existence of solution for $\epsilon$ small enough.

\begin{theorem}
Problem  $(P)_{\epsilon}$ has a ground state solution for $\epsilon\in(0,\epsilon_{0})$ .
\end{theorem}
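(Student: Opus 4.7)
The plan is to mimic the strategy of Theorem \ref{existenciapontocritico} but replace every use of the Lions concentration--compactness principle (which is tailored to the $L^{2^{*}}$ critical Sobolev embedding) by a Trudinger--Moser based argument suited to the exponential critical growth. Let $(u_n)\subset\M_\epsilon$ be the bounded $(PS)_{c_\epsilon}$ sequence provided by the previous proposition. The first step is a non-vanishing estimate: I will show that there exist $(z_n)\subset\Z^2$ and $r,\eta>0$ such that
$$
\int_{B_r(z_n)}|u_n^+|^2\,dx>\eta,\qquad \forall n\in\N.
$$
Indeed, if this failed then by \cite[Lemma 2.1]{RWW} one would have $u_n^+\to0$ in $L^p(\R^2)$ for every $p>2$. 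Setting $w_n:=\widetilde A\,u_n^+/\|u_n\|$, the bound $c_\epsilon<\widetilde A^{2}/2$ together with (\ref{equivalente}) gives $\|w_n\|_{H^1(\R^2)}\leq\widetilde Aa<1$, so by Lemma \ref{Cao} (Trudinger--Moser) and the argument of \cite[Proposition 2.3]{AOM} one obtains $\int F(w_n)\,dx\to 0$. Since $w_n\in\widehat E(u_n)$,
$$
c_\epsilon=I_\epsilon(u_n)+o_n(1)\geq I_\epsilon(w_n)\geq\frac{\widetilde A^{2}}{2}-|A|_\infty\!\int F(w_n)\,dx+o_n(1)\to\frac{\widetilde A^{2}}{2},
$$
contradicting the corollary.

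Next I would show that $(z_n)$ is bounded in $\R^2$. Suppose by contradiction that $|z_n|\to+\infty$ and define $\widetilde u_n(x):=u_n(x+z_n)$. By $\Z^2$--periodicity of $V$, $(\widetilde u_n)$ is still a bounded $(PS)$ sequence for the shifted functional, and the non-vanishing estimate above guarantees that $\widetilde u_n\rightharpoonup w\neq0$ along a subsequence. Passing to the limit in $I_\epsilon'(u_n)\psi(\cdot-z_n)=o_n(1)$ with $\psi\in C_c^\infty(\R^2)$ and using $A(\epsilon(x+z_n))\to A_\infty$ uniformly on compact sets, I would conclude $J_{A_\infty}'(w)=0$. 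Fatou's lemma then yields
$$
d_{A_\infty}\leq J_{A_\infty}(w)-\tfrac12 J_{A_\infty}'(w)w\leq\liminf_n\bigl(I_\epsilon(u_n)-\tfrac12 I_\epsilon'(u_n)u_n\bigr)=c_\epsilon,
$$
contradicting $c_\epsilon<d_{A_\infty}$. Hence $(z_n)$ is bounded, so the original sequence $(u_n)$ itself has a non-vanishing ball of fixed center.

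Fix now $R>0$ with $B_r(z_n)\subset B_R(0)$ for all $n$. Extracting a subsequence, $u_n\rightharpoonup u$ in $H^1(\R^2)$ with $u\neq0$, and the standard passage to the limit in $I_\epsilon'(u_n)\psi=o_n(1)$ shows $I_\epsilon'(u)=0$, so $u\in\M_\epsilon$ and $I_\epsilon(u)\geq c_\epsilon$. On the other hand, since $\tfrac12 f(t)t-F(t)\geq 0$ by $(f_3)$, Fatou's lemma gives
$$
c_\epsilon=\lim_n\Bigl(I_\epsilon(u_n)-\tfrac12 I_\epsilon'(u_n)u_n\Bigr)\geq\int A(\epsilon x)\bigl(\tfrac12 f(u)u-F(u)\bigr)\,dx=I_\epsilon(u),
$$
and therefore $I_\epsilon(u)=c_\epsilon$, i.e.\ $u$ is a ground state. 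The main obstacle in this program is the control of the exponential nonlinearity when passing to the limit in both the weak equation and the energy: one must ensure that the sequence $e^{4\pi u_n^{2}}$ remains bounded in some $L^s$ with $s>1$, which is where the uniform bound on the energy level (through the inequality $c_\epsilon<\widetilde A^{2}/2$) is exploited together with Lemma \ref{Cao} to apply a Strauss-type convergence argument; without this level bound the Trudinger--Moser threshold would be violated and the compactness arguments above would break down.
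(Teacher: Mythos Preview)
Your overall strategy matches the paper's: non-vanishing of $u_n^+$ via a Trudinger--Moser argument, boundedness of the translation centers by comparison with $d_{A_\infty}$, and Fatou's lemma for the energy. Two points deserve correction.

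First, in the non-vanishing step you normalize by $\|u_n\|$, but then $\tfrac12\|w_n\|^2=\tfrac{\widetilde A^2}{2}\cdot\|u_n^+\|^2/\|u_n\|^2$, which is \emph{at most} $\widetilde A^{2}/2$, not at least; so your chain of inequalities does not yield the contradiction as written. The paper normalizes by $\|u_n^+\|$ (i.e.\ $\widetilde w_n=\widetilde A\,u_n^+/\|u_n^+\|$), which gives $\|\widetilde w_n\|=\widetilde A$ exactly and hence $I_\epsilon(\widetilde w_n)=\tfrac{\widetilde A^2}{2}-\int A(\epsilon x)F(\widetilde w_n)\,dx$.

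Second, and more substantively, your proposed mechanism for passing to the limit in the nonlinear term---a uniform $L^s$ bound on $e^{4\pi u_n^2}$ for some $s>1$ coming from the level bound $c_\epsilon<\widetilde A^{2}/2$---is not what the paper uses, and in fact would not follow: such a bound would require uniform control of $\|u_n\|_{H^1}$ below the Trudinger--Moser threshold, which the energy estimate alone does not provide. The paper instead observes that $\bigl(\int f(u_n)u_n\,dx\bigr)_n$ is bounded (this follows from $I_\epsilon(u_n)-\tfrac12 I_\epsilon'(u_n)u_n\to c_\epsilon$ together with $(f_3)$), and then invokes the compactness lemma of de Figueiredo--Miyagaki--Ruf \cite[Lemma~2.1]{djairo}: if $(f(w_n)w_n)$ is bounded in $L^1$ and $w_n\to w$ a.e., then $f(w_n)\to f(w)$ in $L^1_{\mathrm{loc}}$. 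This is the device that lets you pass to the limit in $I_\epsilon'(u_n)\psi(\cdot-z_n)=o_n(1)$ without any $L^s$ control on the exponential. Once you replace your Trudinger--Moser heuristic by this lemma, the rest of your argument goes through exactly as in the paper.
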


\begin{proof}
To begin with, we claim that there are $(z_{n}) \subset \Z^{2}$ and $r,\eta>0$ such that
\begin{equation}\label{desigualdadelions2}
\int_{B_{r}(z_{n})}|u_{n}^{+}|^{2}dx>\eta, \quad \forall n \in \mathbb{N}.
\end{equation} 
In fact, if the claim does not hold, we must have $u_{n}^{+}\rightarrow0$ in $L^{p}(\R^{2})$ for all $p\in(2,+\infty)$. Since $u_{n}\in\M_{\epsilon}$, by (\ref{umaislimitadainferiormente}), $||u_{n}^{+}||^{2}\geq2c_{\epsilon}\geq2c_{0}$. Setting $\widetilde{w}_{n}(x):=\widetilde{A}\frac{u_{n}^{+}}{||u_{n}^{+}||}$ and arguing as in Proposition \ref{continuidadec2}, we find $c_{\epsilon}\geq\frac{\widetilde{A}^{2}}{2}$, which is a contradiction. Therefore (\ref{desigualdadelions2}) holds.

\begin{claim} \label{Z*}
$(z_{n})$ is bounded in $\R^{2}$.
\end{claim}
Suppose $|z_{n}|\rightarrow+\infty$ and define $w_{n}(x):=u_{n}(x+z_{n})$. From (\ref{desigualdadelions2}), we can suppose that $w_{n}\rightharpoonup w\neq0$ in $H^{1}(\R^2)$. As it was done in (\ref{limitacaofunun}),  $\left(\int f(w_{n})w_{n}dx\right)$ is bounded in $L^{1}(\R^2)$. By \cite[Lemma 2.1]{djairo}, 
$$
f(w_{n})\rightarrow f(w)\ \ \text{ in }L^{1}(B),
$$
for all $B\subset\R^{2}$ bounded Borel set. Now, we repeat the same idea explored in Claim \ref{znlimitada} to deduce that $w$ is a critical point of $J_{A_{\infty}}$ with $d_{A_{\infty}}\leq c_{\epsilon}$, which is absurd. This proves the Claim \ref{Z*}.

To conclude the proof we proceed as in Theorem \ref{existenciapontocritico} to prove that the weak limit of $(u_n)$ is a ground state solution for $I_{\epsilon}$.
\end{proof}

\subsection{Concentration of the solutions.}
In this section we fix  $\epsilon_{n}\rightarrow 0$ with $\epsilon_{n}\in(0,\epsilon_{0})$ for all $n\in\N$. By results of the last section, for each $n\in\N$ there exists $u_{n}$ in $H^{1}(\R^{2})$ such that
$$
I_{n}(u_{n})=c_{n}\ \ \text{ and }\ \ I_{n}'(u_{n})=0,
$$
with the notation $I_{n}:=I_{\epsilon_{n}}$ and $c_{n}:=c_{\epsilon_{n}}$.

\begin{lemma}
The sequence $(u_{n})$ is bounded in $H^{1}(\R^{2})$.
\end{lemma}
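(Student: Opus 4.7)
The plan is to argue by contradiction, parallel to Proposition \ref{continuidadec2} and \cite[Lemma 3.11]{AG}.

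The starting point is the Ambrosetti-Rabinowitz condition $(f_3)$. Since $I_n(u_n)=c_n$ and $I_n'(u_n)u_n=0$, a direct computation gives
$$c_n \;=\; \int A(\epsilon_n x)\Bigl(\tfrac{1}{2}f(u_n)u_n-F(u_n)\Bigr)\,dx \;\geq\; \Bigl(\tfrac{1}{2}-\tfrac{1}{\theta}\Bigr)A_0\int f(u_n)u_n\,dx.$$
Because $c_n\to c_0$ (the $N=2$ analog of Lemma \ref{Lema1} stated just above), this yields uniform bounds on $\bigl(\int f(u_n)u_n\,dx\bigr)$, $\bigl(\int F(u_n)\,dx\bigr)$ (by $(f_3)$) and $(|u_n|_q^q)$ (by $(f_5)$). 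The identity $I_n'(u_n)u_n=0$ also rearranges as $\|u_n^+\|^2-\|u_n^-\|^2=O(1)$.

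Suppose for contradiction that $\|u_n\|\to+\infty$ along a subsequence. Since $\|u_n^+\|^2+\|u_n^-\|^2=\|u_n\|^2\to+\infty$ while their difference stays bounded, both $\|u_n^{\pm}\|\to+\infty$ and $\|u_n^{\pm}\|^2/\|u_n\|^2\to 1/2$. Setting $w_n:=u_n/\|u_n\|$ we have $\|w_n\|=1$; along a subsequence $w_n\rightharpoonup w$ in $H^1(\R^2)$ and a.e. The $L^q$-bound on $u_n$ combined with $\|u_n\|\to+\infty$ gives $|w_n|_q\to 0$, so $w\equiv 0$ and, by interpolation with the $H^1$-bound, $w_n\to 0$ in $L^p(\R^2)$ for every $p\in(2,+\infty)$. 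Testing $I_n'(u_n)=0$ against $u_n^+$ and dividing by $\|u_n\|^2$, one obtains
$$\tfrac{1}{2}+o_n(1)\;=\;\int A(\epsilon_n x)\,\frac{f(u_n)}{u_n}\,w_n\,w_n^+\,dx,$$
so it suffices to show the right-hand side tends to $0$ to reach the contradiction $1/2=0$.

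The main obstacle is precisely this last vanishing: the exponential critical growth $(f_4)$ precludes both a dominated-convergence argument and a direct application of the Trudinger-Moser inequality (Lemma \ref{Cao}), the latter requiring an a priori bound on $\|u_n\|$ which we do not have. Following \cite[Lemma 3.11]{AG}, the plan is to split $\R^2=\{|u_n|\leq\delta\}\cup\{|u_n|>\delta\}$. On the first region $(f_1)$ and $(f_2)$ bound $|f(u_n)/u_n|$ by a constant, so the integral is controlled by the $L^p$-vanishing of $w_n$ for $p>2$ together with the $L^q$-bound on $u_n$ controlling the tails. On the second region one invokes an estimate of the form $|f(t)/t|\leq C_{\alpha,\delta}|t|^{q-2}(e^{\alpha t^2}-1)$ for $|t|>\delta$ and $\alpha>4\pi$, combined with H\"older's inequality, the uniform $L^q$-bound from the first step, and a renormalization of $u_n$ that keeps the exponent strictly below the critical Trudinger-Moser threshold so that Lemma \ref{Cao} provides a uniform bound on the exponential factor, while the $L^p$-vanishing of $w_n$ absorbs the remaining polynomial factor.
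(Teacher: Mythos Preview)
Your first paragraph is fine and matches the paper: the Ambrosetti--Rabinowitz condition together with $I_n(u_n)=c_n$, $I_n'(u_n)u_n=0$ and $c_n\to c_0$ gives uniform bounds on $\int f(u_n)u_n$, $\int F(u_n)$, $|u_n|_q^q$, and on $\|u_n^+\|^2-\|u_n^-\|^2$. The contradiction set-up (both $\|u_n^{\pm}\|\to+\infty$ with ratio $1/2$, and $w_n\to 0$ in every $L^p$, $p>2$) is also correct.

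The gap is in the last paragraph, and it is exactly the point you flag as ``the main obstacle''. Your plan on the region $\{|u_n|>\delta\}$ is to absorb the factor $(e^{\alpha u_n^2}-1)$ via Lemma~\ref{Cao} after ``a renormalization of $u_n$ that keeps the exponent strictly below the critical Trudinger--Moser threshold''. But this cannot be done: Lemma~\ref{Cao} requires a bound of the form $|\nabla(\cdot)|_2\le 1$ on the function that appears \emph{inside} the exponential, and here that function is $u_n$ itself, whose $H^1$-norm you are assuming blows up. Rescaling $u_n$ to $w_n=u_n/\|u_n\|$ pushes the exponent to $\alpha\|u_n\|^2\,w_n^2$, which goes to $+\infty$; any H\"older splitting you write will leave you with an exponential factor $\int (e^{\beta u_n^2}-1)^{\,r}$ for some $\beta,r>0$, and there is no uniform control on it without an a~priori $H^1$-bound. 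In other words, the very estimate you need is circular. (Note also that on the small region $\{|u_n|\le\delta\}$ the integral $\int |w_n||w_n^+|$ is an $L^2$-quantity and $w_n\not\to 0$ in $L^2$, so the vanishing there only comes from taking $\delta\to 0$ via $(f_1)$; this shifts all the work to the large region, where your Trudinger--Moser step fails.)

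The paper itself simply defers to \cite[Lemma~3.11]{AG}, so there is no in-text proof to compare against; but the argument there does \emph{not} proceed by bounding $(e^{\alpha u_n^2}-1)$ directly. What makes the exponential case go through is that one exploits the generalized Nehari structure $u_n\in\mathcal{M}_{\epsilon_n}$ more strongly than just the identity $I_n'(u_n)u_n^+=0$: since $I_n(u_n)=\max_{\hat E(u_n)}I_n$, one may test with rescalings $s_n u_n^+\in\hat E(u_n)$ chosen so that $\|s_n u_n^+\|=\widetilde A$ (hence $\|s_n u_n^+\|_{H^1}\le a\widetilde A<1$), and it is on \emph{this} renormalized function, not on $u_n$, that Lemma~\ref{Cao} is applied to force $\int A\,F(s_n u_n^+)\to 0$ and contradict $c_n<\widetilde A^{2}/2$. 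Your sketch never uses the maximality of $u_n$ on $\hat E(u_n)$, and without it the exponential term cannot be tamed.
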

\begin{proof}
See proof of \cite[Lemma 3.11]{AG}.
\end{proof}

\begin{lemma}
There are $r,\eta>0$ and $(y_{n}) \subset \Z^{2}$ such that
\begin{equation}\label{desigualdadelions3}
\int_{B_{r}(y_{n})}|u_{n}^{+}|^{2}dx>\eta.
\end{equation}
\end{lemma}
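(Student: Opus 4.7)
The plan is to argue by contradiction, exactly as was done at the existence step (the proof of (\ref{desigualdadelions2})) and in Proposition \ref{continuidadec2}. Suppose no such sequence $(y_n)$ exists. Then by Lions' vanishing lemma (\cite[Lemma 2.1]{RWW}), $u_n^+\to 0$ in $L^p(\R^2)$ for every $p\in(2,+\infty)$. Since $u_n\in\M_{\epsilon_n}$, the estimate (\ref{umaislimitadainferiormente}) yields $\|u_n^+\|^2\geq 2c_n\geq 2c_0>0$, so the sequence $(\|u_n^+\|)$ is bounded below away from zero, while $(u_n)$ is bounded in $H^1(\R^2)$ by the previous lemma.

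Following the scheme of Proposition \ref{continuidadec2}, I would set
$$
\widetilde{w}_n(x):=\widetilde{A}\,\frac{u_n^+(x)}{\|u_n^+\|},
$$
where $\widetilde{A}<1/a$ is the constant from (\ref{limitacaod2}). Then $\|\widetilde{w}_n\|=\widetilde{A}$ and, by (\ref{equivalente}), $\|\widetilde{w}_n\|_{H^1(\R^2)}\leq a\widetilde{A}<1$. Writing $\widetilde{w}_n=(\widetilde{A}/\|u_n^+\|)u_n+(-\widetilde{A} u_n^-/\|u_n^+\|)$ shows $\widetilde{w}_n\in\widehat{E}(u_n)$. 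Since $u_n$ is the unique global maximum of $I_{\epsilon_n}$ on $\widehat{E}(u_n)$, we get
$$
c_n=I_{\epsilon_n}(u_n)\geq I_{\epsilon_n}(\widetilde{w}_n)=\frac{\widetilde{A}^2}{2}-\int A(\epsilon_n x)F(\widetilde{w}_n)\,dx.
$$

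The crux is to show $\int A(\epsilon_n x)F(\widetilde{w}_n)\,dx\to 0$. From $(f_1)$ and $(f_4)$, for any $\delta>0$, $\alpha>4\pi$ and $q>2$ there is $C_\delta>0$ with $|F(t)|\leq \delta|t|^2+C_\delta(e^{\alpha t^2}-1)|t|^q$. Since $|A|_\infty<\infty$, a Hölder splitting with conjugate exponents $s,s'>1$ gives
$$
\int A(\epsilon_n x)F(\widetilde{w}_n)\,dx\leq C\delta|\widetilde{w}_n|_2^2+C\Bigl(\int(e^{\alpha \widetilde{w}_n^2}-1)^s dx\Bigr)^{1/s}|\widetilde{w}_n|_{qs'}^{q}.
$$
The inequality $(e^t-1)^s\leq e^{st}-1$ reduces the exponential integral to $\int(e^{s\alpha\widetilde{w}_n^2}-1)\,dx$. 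Because $\|\widetilde{w}_n\|_{H^1}^2\leq(a\widetilde{A})^2<1$, one can choose $\alpha$ close enough to $4\pi$ and $s>1$ close to $1$ so that $s\alpha\|\widetilde{w}_n\|_{H^1}^2<4\pi$; Lemma \ref{Cao} then bounds this exponential integral uniformly in $n$. On the other hand, since $qs'>2$ and $\widetilde{w}_n\to 0$ in $L^p(\R^2)$ for every $p>2$, we have $|\widetilde{w}_n|_{qs'}\to 0$. Finally, $\delta$ is arbitrary, so the whole expression tends to zero.

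Letting $n\to\infty$ in the inequality above yields $\lim_n c_n\geq \widetilde{A}^2/2$, which contradicts the bound $c_{\epsilon_n}<\widetilde{A}^2/2$ furnished by the corollary of the previous subsection. Therefore the vanishing hypothesis is false, and (\ref{desigualdadelions3}) holds. The main obstacle in this argument is, as in Proposition \ref{continuidadec2}, the delicate calibration of $\alpha$ and $s$ against the sub-unit Trudinger–Moser threshold $a\widetilde{A}<1$; once that is in place, the rest is routine.
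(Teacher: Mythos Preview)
Your proposal is correct and follows exactly the approach the paper indicates: the paper's proof merely says ``See proof of (\ref{desigualdadelions2})'', and that proof in turn proceeds by contradiction, defines $\widetilde{w}_n=\widetilde{A}\,u_n^+/\|u_n^+\|$, and invokes the argument of Proposition~\ref{continuidadec2} to conclude $c_\epsilon\geq\widetilde{A}^2/2$. You have simply fleshed out the Trudinger--Moser step that the paper delegates to \cite[Proposition~2.3]{AOM}, and your calibration of $\alpha>4\pi$ and $s>1$ against the threshold $s\alpha(a\widetilde{A})^2<4\pi$ is the right one. One very small cosmetic point: the final contradiction is cleanest if you pair $\liminf_n c_n\geq\widetilde{A}^2/2$ with $\lim_n c_n=c_0<\widetilde{A}^2/2$ (Lemma~3.2 together with (\ref{limitacaod2})), rather than with the strict inequality $c_{\epsilon_n}<\widetilde{A}^2/2$, since strict inequalities do not survive limits.
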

\begin{proof}
See proof of (\ref{desigualdadelions2}). 
\end{proof}

\vspace{0.5 cm}

From now on, we set $v_{n}(x):=u_{n}(x+y_{n})$. Then,  by (\ref{desigualdadelions3}),  $v_{n}\rightharpoonup v\neq 0$ in $H^{1}(\R^2)$ for some subsequence.

\begin{lemma}
The sequence $(\epsilon_{n}y_{n})$ is bounded in $\R^{2}$. Moreover, $I'_{0}(v)=0$ and if $\epsilon_{n}y_{n}\rightarrow z\in\R^{2}$ then $z\in\mathcal{A}$ or equivalently $A(z)=A(0)$.
\end{lemma}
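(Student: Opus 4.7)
The plan is to mirror the argument of Lemma \ref{enynlimitado} from the $N \geq 3$ case, with the necessary technical adaptations required by the exponential critical growth.

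First, using $(f_3)$ and the identity $I_{n}(u_{n}) - \tfrac{1}{2} I_{n}'(u_{n}) u_{n} = c_{n}$, one gets
$$
\left(\tfrac{1}{2}-\tfrac{1}{\theta}\right) A_{0}\int f(u_{n})u_{n}\,dx \leq c_{n} \leq c_{0},
$$
so $\left(\int f(v_{n})v_{n}\,dx\right)$ is bounded. Next, suppose by contradiction that $|\epsilon_{n}y_{n}|\to+\infty$. For any $\psi \in C^{\infty}_{c}(\R^{2})$ take $\psi_{n}(x):=\psi(x-y_{n})$. From $I_{n}'(u_{n})\psi_{n}=0$ and a change of variables,
$$
\int \nabla v_{n}\nabla\psi + V(x)v_{n}\psi\,dx = \int A(\epsilon_{n}x+\epsilon_{n}y_{n})f(v_{n})\psi\,dx.
$$
The left-hand side passes to the limit by weak convergence $v_n \rightharpoonup v$. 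For the right-hand side, one invokes \cite[Lemma 2.1]{djairo} (applicable since $\left(\int f(v_n)v_n\,dx\right)$ is bounded and $v_n \to v$ a.e.) to conclude $f(v_{n})\to f(v)$ in $L^{1}(B)$ for every bounded Borel set $B$, combined with the uniform convergence $A(\epsilon_{n}x+\epsilon_{n}y_{n})\to A_{\infty}$ on $\mathrm{supp}\,\psi$. This yields
$$
\int \nabla v\nabla\psi + V(x)v\psi\,dx = A_{\infty}\int f(v)\psi\,dx, \quad \forall \psi \in C^{\infty}_{c}(\R^{2}),
$$
hence $J'_{A_{\infty}}(v)=0$.

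Since $v \neq 0$, Fatou's lemma applied to the nonnegative integrand $\tfrac{1}{2}f(t)t-F(t)\geq 0$ (from $(f_3)$) gives
$$
d_{A_{\infty}} \leq J_{A_{\infty}}(v) - \tfrac{1}{2}J_{A_{\infty}}'(v)v = A_{\infty}\int\!\left(\tfrac{1}{2}f(v)v-F(v)\right)dx \leq \liminf_{n} c_{n} = c_{0},
$$
contradicting $c_{0}<d_{A_{\infty}}$. Therefore $(\epsilon_{n}y_{n})$ is bounded. Now assume $\epsilon_{n}y_{n}\to z$. Repeating the above argument with $A(z)$ in place of $A_{\infty}$ produces $J'_{A(z)}(v)=0$, which after identifying with $I_0$ at $z=0$ gives in particular $I_0'(v)=0$, and
$$
d_{A(z)} \leq J_{A(z)}(v) - \tfrac{1}{2}J_{A(z)}'(v)v \leq \liminf_{n} c_{n} = c_{0} = d_{A(0)}.
$$
By Proposition \ref{continuidadec2}, $\lambda\mapsto d_{\lambda}$ is decreasing, so $A(z)\geq A(0)$; combined with $A(0)=\max_{x\in\R^{2}}A(x)$, this forces $A(z)=A(0)$, i.e.\ $z\in\mathcal{A}$.

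The main technical obstacle, as expected in the exponential critical setting, is the passage to the limit in the nonlinear term $\int A(\epsilon_{n}x+\epsilon_{n}y_{n})f(v_{n})\psi\,dx$. Unlike the $N\geq 3$ case where polynomial growth and dominated convergence suffice, here one must rely on the compactness lemma of \cite{djairo} together with the boundedness of $\int f(v_{n})v_{n}\,dx$; this is precisely the ingredient already used in the proof of Claim \ref{Z*}, so the mechanism is in place.
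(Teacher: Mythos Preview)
Your proof is correct and follows essentially the same approach as the paper's own proof, which simply notes that $(f(v_n)v_n)$ is bounded in $L^1(\R^2)$, invokes \cite[Lemma 2.1]{djairo} to get $f(v_n)\to f(v)$ in $L^1_{loc}$, and then says the argument of Lemma \ref{enynlimitado} goes through verbatim. You have spelled out those details explicitly.

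One small slip: the inequality $c_n \leq c_0$ is not quite right (in fact $c_\epsilon \geq c_0$ since $A(\epsilon x)\leq A(0)$ implies $I_\epsilon \geq I_0$, and the min--max characterization gives the inequality on the levels); what you actually need, and what suffices, is that $(c_n)$ is bounded because $c_n \to c_0$. Also, the phrase ``after identifying with $I_0$ at $z=0$'' is a bit misleading: the conclusion $I_0'(v)=0$ follows \emph{after} establishing $z\in\mathcal{A}$, since then $A(z)=A(0)$ and $J_{A(z)}=J_{A(0)}=I_0$. Neither point affects the validity of the argument.
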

\begin{proof}
As in the previous section, $(f(u_{n})u_{n})$ is bounded in $L^{1}(\R^{2})$. Then, by \cite[Lemma 2.1]{djairo}, 
$$
f(u_{n})\rightarrow f(u)\ \text{ in }\ L^{1}(B),
$$
for all bounded Borel set $B\subset\R^{2}$ . The above limit permits to repeat the same arguments explored in Lemma \ref{enynlimitado}.
\end{proof}

\vspace{0.5 cm}

Our next proposition follows with the same idea explored in Proposition \ref{funcaodominada}, then we omit its proof.
\begin{proposition}
There exists $h\in L^{1}(\R^{2})$ and a subsequence of $(v_n)$ such that 
$$
|f(v_{n}(x))v_{n}(x)|\leq h(x), \quad \mbox{for all} \quad x\in\R^{2} \quad \mbox{and} \quad n\in\N.
$$
\end{proposition}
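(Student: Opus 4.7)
The plan is to mirror the argument of Proposition \ref{funcaodominada}, with condition $(f_3)$ playing the role that the explicit power form of $f$ played in the case $N \geq 3$. The proposition will follow from a squeeze argument that forces $L^{1}$-convergence of a sequence of nonnegative integrands, combined with the standard fact that any $L^{1}$-convergent sequence that also converges a.e.\ admits, up to a subsequence, an $L^{1}$ dominating function.

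The first step is to exploit the fact that $v \neq 0$ is a critical point of $J_{A(0)}$ (established in the previous lemma). This gives
$$
d_{A(0)} \leq J_{A(0)}(v) = J_{A(0)}(v) - \tfrac{1}{2}J'_{A(0)}(v)v = \int A(0)\left(\tfrac{1}{2}f(v)v - F(v)\right)dx.
$$
Condition $(f_3)$ yields $\tfrac{1}{2}f(t)t - F(t) \geq \bigl(\tfrac{1}{2}-\tfrac{1}{\theta}\bigr) f(t)t \geq 0$, so the integrand above is nonnegative, as is the analogous integrand for $v_n$. Applying Fatou's Lemma and the change of variable $x\mapsto x+y_n$, and then using $I_n'(u_n)=0$ together with $c_n\to c_0 = d_{A(0)}$, I obtain
$$
d_{A(0)} \leq \liminf_n \int A(\epsilon_n x+\epsilon_n y_n)\left(\tfrac{1}{2}f(v_n)v_n - F(v_n)\right)dx = \lim_n\left(I_n(u_n) - \tfrac{1}{2}I_n'(u_n)u_n\right) = c_0 = d_{A(0)}.
$$

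The chain of inequalities then collapses to equality, so the liminf is a genuine limit and coincides with $\int A(z)\bigl(\tfrac{1}{2}f(v)v - F(v)\bigr)dx$. Since the integrands are nonnegative and converge a.e.\ to the limit integrand, $L^{1}$-convergence holds. Invoking the classical converse of the dominated convergence theorem, there exist a subsequence and a function $\widetilde h \in L^{1}(\R^{2})$ such that
$$
A(\epsilon_n x + \epsilon_n y_n)\left(\tfrac{1}{2}f(v_n)v_n - F(v_n)\right) \leq \widetilde h(x) \quad \text{a.e. in } \R^{2},\ \forall n\in\N.
$$
Finally, combining $(f_3)$ with the lower bound $A \geq A_0 > 0$ from $(A)$ gives
$$
\left(\tfrac{1}{2}-\tfrac{1}{\theta}\right) A_0\, f(v_n)v_n \leq A(\epsilon_n x+\epsilon_n y_n)\left(\tfrac{1}{2}f(v_n)v_n - F(v_n)\right),
$$
and setting $h:= \bigl[(\tfrac{1}{2}-\tfrac{1}{\theta}) A_0\bigr]^{-1}\widetilde h$ yields the claimed estimate.

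The main obstacle, as in the $N\geq 3$ case, is the squeeze argument in the second paragraph: everything depends on having the strict inequality $c_{\epsilon_n}<d_{A_\infty}$ propagating through the limit so that Lemma on $\lim c_n = c_0 = d_{A(0)}$ actually applies, and on the a.e.\ convergence being good enough that Fatou is tight. The step where the exponential critical growth creates a genuinely new issue—namely controlling $\int f(v_n)v_n$ itself—is not encountered here, because $(f_3)$ directly dominates $f(v_n)v_n$ by the already-controlled quantity $\tfrac{1}{2}f(v_n)v_n - F(v_n)$, sidestepping any Trudinger–Moser estimates in this particular proposition.
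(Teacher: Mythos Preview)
Your proof is correct and follows exactly the approach the paper intends: it explicitly omits the argument here, referring back to Proposition~\ref{funcaodominada}, and your adaptation---replacing the explicit inequality $\tfrac{1}{2}f(t)t - F(t) \geq (\tfrac{1}{2}-\tfrac{1}{q+1})f(t)t$ coming from the power form of $f$ by the inequality $\tfrac{1}{2}f(t)t - F(t) \geq (\tfrac{1}{2}-\tfrac{1}{\theta})f(t)t$ coming from $(f_3)$---is precisely the modification needed. Your closing observation that this step bypasses any Trudinger--Moser machinery is also accurate.
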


As an immediate consequence of the last lemma, we have the following corollary
\begin{corollary}
$v_{n}\rightarrow v$ in $L^{q}(\R^{2})$ where $q$ was given in $(f_5)$.
\end{corollary}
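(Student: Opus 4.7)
The plan is to extract $L^q$ domination from the preceding proposition and then invoke the Lebesgue Dominated Convergence Theorem. First I would chain hypotheses $(f_3)$ and $(f_5)$: $(f_3)$ gives $f(t)t \geq \theta F(t)$ while $(f_5)$ gives $F(t) \geq \tau|t|^q$, so combining them,
\begin{equation*}
|t|^{q} \leq \frac{1}{\theta \tau}\, f(t)t, \qquad \forall\, t \in \R.
\end{equation*}
Applied pointwise to $v_n$ and combined with the previous proposition, this produces
\begin{equation*}
|v_n(x)|^{q} \leq \frac{1}{\theta \tau}\, h(x), \qquad \forall\, x \in \R^{2}, \ \forall\, n \in \N,
\end{equation*}
with $h \in L^{1}(\R^{2})$, so the family $\{|v_n|^{q}\}$ is equi-dominated by a fixed $L^{1}$ function.

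Next I would use that $v_{n}\rightharpoonup v$ in $H^{1}(\R^{2})$ implies, along a subsequence, $v_{n}(x)\to v(x)$ a.e.\ in $\R^{2}$. Fatou's Lemma then transfers the envelope to the limit, giving $|v(x)|^{q}\leq h(x)/(\theta\tau)$ a.e., and the elementary inequality $|a-b|^{q}\leq 2^{q-1}(|a|^{q}+|b|^{q})$ yields
\begin{equation*}
|v_n(x)-v(x)|^{q} \leq \frac{2^{q}}{\theta \tau}\, h(x) \in L^{1}(\R^{2}).
\end{equation*}
Since $|v_n(x)-v(x)|^{q}\to 0$ pointwise a.e., the Dominated Convergence Theorem gives $\int |v_n-v|^{q}dx \to 0$, i.e.\ $v_n \to v$ in $L^{q}(\R^{2})$.

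There is essentially no substantive obstacle: the delicate step has already been carried out by the preceding proposition that produced the $L^{1}$ envelope $h$ (which in turn relied on the energy identity $c_{0}=d_{A(0)}$ and Fatou's Lemma applied to the non-negative integrands $\tfrac{1}{2}f(v_n)v_n-F(v_n)$). The only point worth double-checking is that the constant $\theta\tau$ is indeed strictly positive, which is guaranteed by $(f_3)$ and $(f_5)$, and that the Ambrosetti--Rabinowitz-type inequality in $(f_3)$ holds for every $t\in\R\setminus\{0\}$ so that the pointwise bound on $|v_n|^{q}$ is valid everywhere (including the zero set of $v_n$, where the inequality is trivial).
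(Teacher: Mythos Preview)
Your proposal is correct and follows exactly the same route as the paper's proof: you chain $(f_3)$ and $(f_5)$ to obtain $|v_n|^{q}\leq \frac{1}{\theta\tau}f(v_n)v_n\leq \frac{1}{\theta\tau}h$, then combine this $L^1$ domination with the a.e.\ convergence $v_n\to v$ to conclude via dominated convergence. The paper compresses all of this into a single sentence, but the argument is identical.
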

\begin{proof}
It suffices to note that $f(v_{n})v_{n}\geq\theta F(v_{n}) \geq \theta\tau|v_{n}|^{q}$ for all $n \in \mathbb{N}$ and $v_n(x) \to v(x)$ a.e in $\mathbb{R}^N$.
\end{proof}

\vspace{0.5 cm}

The next lemma have been motivated by an inequality found  \cite[Lemma 2.11]{DORUF}, however it is a little different, because we need to adapt it for our problem.

\begin{lemma}\label{desigualdadechave}
For all $t,s\geq0$ and $\beta\in(0,1]$, 
$$
ts\leq\left\{\begin{array}{ll}4(e^{t^{2}}-1)(ln^{+}s)+s(ln^{+}s)^{1/2}, & \text{ if }s>e^{1/4} \\ e^{1/4}ts^{\beta}, & \text{ if }s\in[0,e^{1/4}]. \end{array}\right.
$$
\end{lemma}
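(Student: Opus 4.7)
The plan is to split on the two alternatives in the statement, and in the second alternative to split further according to whether $t$ is small or large relative to $(\ln s)^{1/2}$. The specific threshold $e^{1/4}$ is not arbitrary, but is calibrated so that a certain exponent works out exactly.

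For the easy branch $s\in[0,e^{1/4}]$ with $\beta\in(0,1]$, since $1-\beta\in[0,1)$ one has $s^{1-\beta}\le (e^{1/4})^{1-\beta}=e^{(1-\beta)/4}\le e^{1/4}$, and multiplying through by $ts^{\beta}\ge 0$ gives $ts\le e^{1/4}ts^{\beta}$. This is the claimed inequality on $[0,e^{1/4}]$.

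For the main branch $s>e^{1/4}$, so $\ln^{+}s=\ln s>1/4$, I would further distinguish two cases. If $t\le(\ln s)^{1/2}$, then $ts\le s(\ln s)^{1/2}$, and since the other summand $4(e^{t^{2}}-1)\ln s$ is nonnegative the inequality is immediate. If instead $t>(\ln s)^{1/2}$, set $u:=t-(\ln s)^{1/2}>0$; then $ts-s(\ln s)^{1/2}=su$ and it suffices to prove
\[
su\le 4\bigl(e^{t^{2}}-1\bigr)\ln s.
\]
Using $t^{2}=u^{2}+2u(\ln s)^{1/2}+\ln s$ together with $e^{u^{2}}\ge 1$ and $e^{2u(\ln s)^{1/2}}\ge 1+2u(\ln s)^{1/2}$ yields
\[
e^{t^{2}}=s\,e^{u^{2}}e^{2u(\ln s)^{1/2}}\ge s\bigl(1+2u(\ln s)^{1/2}\bigr),
\]
so $e^{t^{2}}-1\ge(s-1)+2su(\ln s)^{1/2}\ge 2su(\ln s)^{1/2}$ because $s>e^{1/4}>1$. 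Multiplying by $4\ln s$ then gives $4(e^{t^{2}}-1)\ln s\ge 8su(\ln s)^{3/2}$, and from $\ln s>1/4$ one concludes $8(\ln s)^{3/2}>8(1/4)^{3/2}=1$, delivering $4(e^{t^{2}}-1)\ln s\ge su$.

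The only subtle point, and hence the place one should watch carefully, is the numerical matching at the end: the threshold $e^{1/4}$ appears exactly because the identity $8\cdot(1/4)^{3/2}=1$ is what lets the exponential term absorb $su$ in the last case. Apart from this calibration, every step is routine, relying only on $e^{x}\ge 1+x$ and the factoring of $e^{t^{2}}$; no further obstacle is anticipated.
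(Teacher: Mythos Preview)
Your proof is correct. The easy branch is handled essentially as in the paper (the paper splits $[0,e^{1/4}]$ into $[0,1)$ and $[1,e^{1/4}]$ rather than using the single bound $s^{1-\beta}\le e^{1/4}$, but this amounts to the same thing).

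For the main branch $s>e^{1/4}$, your argument is genuinely different from the paper's. The paper simply invokes \cite[Lemma 2.11]{DORUF}, which asserts the sharper inequality $ts\le (e^{t^{2}}-1)+s(\ln^{+}s)^{1/2}$, and then notes that $\ln^{+}s>1/4$ implies $(e^{t^{2}}-1)\le 4(e^{t^{2}}-1)\ln^{+}s$. You instead prove the stated inequality from scratch by splitting on $t\lessgtr(\ln s)^{1/2}$ and using only $e^{x}\ge 1+x$; this is self-contained and makes transparent why the particular constants $4$ and $e^{1/4}$ are matched (through $8\cdot(1/4)^{3/2}=1$). The paper's route is shorter because it delegates the work to an external reference, while yours gains in being elementary and explaining the calibration of the constants; either approach is perfectly adequate here.
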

\begin{proof}
From \cite[Lemma 2.11]{DORUF}, if $s>e^{1/4}$ then $ln^{+}s>1/4$ and
$$
ts\leq(e^{t^{2}}-1)+s(ln^{+}s)^{1/2}\leq4(e^{t^{2}}-1)(ln^{+}s)+s(ln^{+}s)^{1/2}.
$$
For $s\in[0,1)$, we have $ts\leq ts^{\beta}\leq e^{1/4}ts^{\beta}$, and if $s\in[1,e^{1/4}]$, then $ts\leq te^{1/4}\leq e^{1/4}ts^{\beta}$. This proves the inequality.
\end{proof}

\begin{proposition}\label{limiteforte2}
$v_{n}\rightarrow v$ in $H^{1}(\R^{2})$.
\end{proposition}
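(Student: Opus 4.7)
The strategy is to reduce the desired strong convergence to proving $\|v_n^+\|^2 \to \|v^+\|^2$ and $\|v_n^-\|^2 \to \|v^-\|^2$: since $H^1(\R^2)=E^+\oplus E^-$ is an orthogonal decomposition with respect to the equivalent inner product $\langle\cdot,\cdot\rangle$, and $v_n^\pm \rightharpoonup v^\pm$ in $H^1(\R^2)$ already, these norm limits together with weak convergence in a Hilbert space give $v_n^\pm \to v^\pm$ strongly, hence $v_n \to v$ in $H^1(\R^2)$. Translating $I_n'(u_n)=0$ by $y_n$ and using the $\Z^2$-periodicity of $V$ yields the shifted identity $B(v_n,\psi)=\int A_n f(v_n)\psi\,dx$ for every $\psi\in H^1(\R^2)$, where $A_n(x):=A(\epsilon_n x+\epsilon_n y_n)$, while the preceding lemma provides the limiting identity $B(v,\psi)=\int A(0)f(v)\psi\,dx$. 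Testing these respectively with $v_n^+$ and $v^+$ (and likewise with the minus components) reduces the proof to showing
$$
\int A_n f(v_n)\,v_n^\pm\,dx \;\longrightarrow\; \int A(0)\,f(v)\,v^\pm\,dx.
$$

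The plan is to split this as $\int A_n f(v_n)(v_n^\pm-v^\pm)\,dx + \int A_n f(v_n) v^\pm\,dx$. The second summand is the easier piece: the function $v^\pm\in H^1(\R^2)$ is fixed, so Lemma \ref{Cao} implies $f(v)\in L^r(\R^2)$ for every $r\geq 1$; combined with the uniform convergence $A_n\to A(0)$ on compact subsets of $\R^2$ (which follows since $\epsilon_n y_n \to z$ with $A(z)=A(0)$), the pointwise convergence $f(v_n)\to f(v)$ a.e., the integrable dominant $h\in L^1(\R^2)$ for $f(v_n)v_n$ furnished by the preceding proposition, and the $L^1$-integrability of $f(v)v^\pm$ on complements of large balls, a Vitali-type argument yields $\int A_n f(v_n)v^\pm\,dx \to \int A(0) f(v) v^\pm\,dx$.

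The first summand is the main obstacle, because $f(v_n)$ has exponential critical growth and $v_n^\pm - v^\pm$ converges only weakly in $H^1(\R^2)$. Here I would invoke Lemma \ref{desigualdadechave} with $t=\sqrt{4\pi}\,|v_n^\pm-v^\pm|$ and $s=|f(v_n)|/\Gamma$, using $(f_4)$ which implies $\ln^+ s \leq 4\pi v_n^2$. On the set $\{s>e^{1/4}\}$ the integrand is dominated by a constant multiple of $(e^{4\pi(v_n^\pm-v^\pm)^2}-1)v_n^2 + |f(v_n)v_n|$: the second summand is controlled by the $L^1$-dominant $h$ from the preceding proposition, while for the first summand Hölder's inequality and the Trudinger--Moser inequality (Lemma \ref{Cao}) apply --- the critical threshold is respected thanks to the a priori bound $c_n<\widetilde A^{\,2}/2$ with $\widetilde A\,a<1$ established in the corollary earlier in this section --- combined with the local convergence $v_n^\pm - v^\pm \to 0$ in $L^r_{\mathrm{loc}}(\R^2)$ for every $r\geq 2$. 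On the complementary set $\{s\leq e^{1/4}\}$, the estimate $ts\leq e^{1/4}ts^\beta$ with a suitable $\beta\in(0,1]$ converts the integrand into a subcritical form whose limit is zero by the $L^q$-convergence $v_n\to v$ provided by the preceding corollary. The analogous argument for the minus component yields the claimed $v_n\to v$ in $H^1(\R^2)$.
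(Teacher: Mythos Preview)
Your overall scheme is the right one and coincides with the paper's: reduce $v_n\to v$ in $H^1(\R^2)$ to showing
\[
\int A_n\,f(v_n)\,v_n^{\pm}\,dx \;\longrightarrow\; \int A(0)\,f(v)\,v^{\pm}\,dx,
\]
and then use weak convergence plus convergence of norms. The problem lies in your treatment of the critical exponential term.

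You apply Lemma~\ref{desigualdadechave} with $t=\sqrt{4\pi}\,|v_n^{\pm}-v^{\pm}|$, which produces the factor $e^{4\pi\,(v_n^{\pm}-v^{\pm})^2}-1$. To control this in any $L^{p}(\R^2)$ with $p>1$ via Lemma~\ref{Cao} you would need $4\pi p\,|\nabla(v_n^{\pm}-v^{\pm})|_2^{2}<4\pi$, i.e.\ $|\nabla(v_n^{\pm}-v^{\pm})|_2<1/\sqrt{p}$ uniformly in $n$. You only know $v_n^{\pm}-v^{\pm}\rightharpoonup 0$, which gives boundedness of the norm, not smallness. The inequality $c_n<\widetilde A^{2}/2$ with $\widetilde A\,a<1$ does \emph{not} yield such a bound: that estimate controls the energy level, and through it quantities like $\int f(u_n)u_n$, but it gives no upper bound on $\|v_n^{\pm}-v^{\pm}\|_{H^1}$ (indeed, from $u_n\in\M_{\epsilon_n}$ one only gets the \emph{lower} bound $\|u_n^+\|^2\ge 2c_n$). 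So the appeal to Trudinger--Moser at the fixed exponent $4\pi$ is not justified, and the argument for the ``first summand'' breaks down.

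The paper avoids this by not splitting off $v^{\pm}$ and by exploiting the free parameter in Lemma~\ref{desigualdadechave}. It applies that lemma with $t=\alpha\,|v_n^{+}|$ and $s=|f(v_n)|/\Gamma$, where $\alpha>0$ is chosen so small that $\dfrac{\alpha^{2}q}{q-1}\,\sup_n\|v_n^{+}\|_{H^1(\R^2)}^{2}<1$. Then $b_n:=e^{\alpha^{2}|v_n^+|^{2}}-1$ is uniformly bounded in $L^{q/(q-1)}(\R^2)$ by Lemma~\ref{Cao}, and the three pieces of the lemma give
\[
|f(v_n)\,v_n^{+}|\;\le\; C\,b_n\,|v_n|^{2}\;+\;C\,f(v_n)v_n\;+\;C\,|v_n^{+}|\,\widetilde h,
\]
with $\widetilde h\in L^{2}(\R^2)$ obtained from the $L^{1}$-dominant $h$ on the region $\{|f(v_n)|\le\Gamma e^{1/4}\}$. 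The second piece is dominated by $h$; for the first and third, the $L^{q}$-convergence $v_n\to v$ from the preceding corollary, combined with $b_n\rightharpoonup b$ in $L^{q/(q-1)}$ and $v_n^{+}\rightharpoonup v^{+}$ in $L^{2}$, yields convergence in $L^{1}$ and hence (up to a subsequence) a common $L^{1}$-dominant. Dominated convergence then gives $f(v_n)v_n^{\pm}\to f(v)v^{\pm}$ in $L^{1}(\R^2)$ directly. The same device---taking $\alpha$ small rather than $\sqrt{4\pi}$---would also repair your argument if you insist on the splitting, but the paper's route is shorter.
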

\begin{proof}
To begin with, by $(f_1)$, there exists $K>0$ such that
$$
|f(t)|\leq\Gamma e^{1/4} \Longrightarrow |f(t)|^{2}\leq K f(t)t.
$$
On the other hand, 
$$
\left(|f(v_{n})|\chi_{[0,e^{1/4}]}\left(\frac{1}{\Gamma}|f(v_{n})|\right)\right)^{2}=|f(v_{n})|^{2}\chi_{[0,\Gamma e^{1/4}]}(|f(v_{n})|)\leq
$$
$$
\leq Kf(v_{n})v_{n}\leq Kh\in L^{1}(\R^{2}).
$$
Thus, there exists $\widetilde{h}\in L^{2}(\R^{2})$ such that
$$
|f(v_{n})|\chi_{[0,e^{1/4}]}\left(\frac{1}{\Gamma}|f(v_{n})|\right)\leq\widetilde{h}, \quad \forall n \in \mathbb{N}.
$$
In what follows, fixing $\alpha >0$ such that $\frac{\alpha^{2} q}{q-1} \sup_{n \in \N}\|v_n^+\|^{2}_{H^{1}(\R^2)}<1$, the Lemma \ref{Cao} guarantees that 
$$
b_{n}:=(e^{\alpha^{2}|v_{n}^{+}|^{2}}-1)\in L^{\frac{q}{q-1}}(\R^{2}) \quad \mbox{and} \quad |b_n|_{\frac{q}{q-1}} \leq C
$$
for all $n \in \mathbb{N}$ and some $C>0$. Applying the Lemma \ref{desigualdadechave} for $t=\alpha|v_{n}^{+}|$, $s=\frac{1}{\Gamma}|f(v_{n})|$ and $\beta=1$, we obtain
$$
|f(v_{n})v_{n}^{+}|=\frac{\Gamma}{\alpha}\frac{|f(v_{n})|}{\Gamma}\alpha|v_{n}^{+}|\leq\frac{\Gamma}{\alpha}4(e^{\alpha^{2}|v_{n}^{+}|^{2}}-1)\left(ln^{+}\left(\frac{1}{\Gamma}|f(v_{n})|\right)\right)+
$$
$$+\frac{1}{\alpha}|f(v_{n})|\left(ln^{+}\left(\frac{1}{\Gamma}|f(v_{n})|\right)\right)^{1/2}+e^{1/4}|v_{n}^{+}||f(v_{n})|\chi_{[0,e^{1/4}]}\left(\frac{1}{\Gamma}f(v_{n})\right)\leq$$
$$
\leq\frac{16\Gamma\pi}{\alpha}b_{n}|v_{n}|^{2}+\frac{\sqrt{4\pi}}{\alpha}f(v_{n})v_{n}+e^{1/4}|v_{n}^{+}|\widetilde{h}.
$$
Since $b_{n}\rightharpoonup b$ in $L^{\frac{q}{q-1}}(\R^{2})$ and $v_{n}\rightarrow v$ in $L^{q}(\R^{2})$, we have that $(b_{n}|v_{n}|^{2})$ is strongly convergent in $L^{1}(\R^{2})$. Here, we have used the fact that $b_n|v_n|^{2} \geq 0$ and $v_n(x) \to v(x)$ a.e in $\mathbb{R}^N$. Analogously $(|v_{n}^{+}|\widetilde{h})$ converges in $L^{1}(\R^{2})$. Consequently there is $H_1 \in L^{1}(\R^{2})$ such that, for some subsequence,
$$
|f(v_{n})v_{n}^{+}|\leq H, \quad \forall n \in \mathbb{N}.
$$
The same argument works to show that there exists $H_2 \in L^{1}(\R^{2})$ such that, for some subsequence,
$$
|f(v_{n})v_{n}^{-}|\leq H_2, \quad \forall n \in \mathbb{N}.
$$
As an consequence of the above information,
$$
f(v_{n})v_{n}^{+} \to f(v)v^{+} \quad \mbox{and} \quad f(v_{n})v_{n}^{-} \to f(v_{n})v^{-} \quad \mbox{in} \quad L^{1}(\R^2).
$$
Now, recalling that $I_0'(v)=I_n'(v_n)v_n^{+}=I_n'(v_n)v_n^{-}=0,  v_n^{+}\rightharpoonup v^{+}$, and $v_n^{-}\rightharpoonup v^{-}$ in $H^{1}(\mathbb{R}^2)$, we get the desired result. 
\end{proof}

\begin{lemma} \label{controleuniforme}
	For all $n\in\N$, $v_{n}\in C(\R^{2})$. Moreover, there exist $G\in L^{3}(\R^{2})$, $C>0$ independently of $x\in\R^{2}$ and $n\in\N$ such that
	$$
	||v_{n}||_{C(\overline{B_{1}(x)})}\leq C|G|_{L^{3}(B_{2}(x))}, \quad \mbox{for all} \quad n\in\N \quad \mbox{and} \quad x\in\R^{2}.
	$$
	Hence, there exists $C>0$ such that $|v_{n}|_{L^{\infty}(\R^{2})}\leq C$ and
	$$
	|v_{n}(x)|\rightarrow 0\text{ as }|x|\rightarrow+\infty, \quad \mbox{uniformly in} \quad n \in \mathbb{N}.
	$$

\end{lemma}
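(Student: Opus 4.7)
The proof combines a local elliptic regularity estimate on balls of fixed size with a uniform Trudinger–Moser control on the nonlinearity furnished by the strong $H^{1}$ convergence $v_{n}\to v$ from Proposition~\ref{limiteforte2}.

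Step 1 (Local regularity). I would rewrite the PDE for $v_{n}$ in the form
\[
-\Delta v_{n}+v_{n}=h_{n},\qquad h_{n}:=(1-V)v_{n}+A(\epsilon_{n}\cdot+\epsilon_{n}y_{n})f(v_{n}),
\]
and then apply standard interior regularity for $-\Delta+I$ on $\R^{2}$—either a Moser iteration with cutoffs, in the spirit of Lemma~\ref{Lema2.13}, or the Bessel-kernel representation $v_{n}=\mathcal{K}\ast h_{n}$, with $\mathcal{K}\in L^{p}(\R^{2})$ for every $p\in[1,\infty)$ and exponential decay at infinity. Both routes yield $v_{n}\in C(\R^{2})$ and a universal constant $C>0$ with
\[
\|v_{n}\|_{L^{\infty}(B_{1}(x))}\leq C\bigl(\|h_{n}\|_{L^{3}(B_{2}(x))}+\|v_{n}\|_{L^{2}(B_{2}(x))}\bigr),\qquad\forall\,n\in\N,\ \forall\,x\in\R^{2}.
\]

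Step 2 (Uniform $L^{3}$ domination). By Proposition~\ref{limiteforte2}, $v_{n}\to v$ in $H^{1}(\R^{2})$, hence in $L^{p}(\R^{2})$ for every $p\in[2,\infty)$; passing to a subsequence, $v_{n}\to v$ a.e.\ and $|v_{n}|\leq w_{0}$ for some $w_{0}\in L^{3}(\R^{2})$. For the nonlinear part, $(f_{1})$ and $(f_{4})$ yield, for every $\delta>0$ and every $\alpha>4\pi$,
\[
|f(t)|\leq \delta|t|+C_{\delta,\alpha}\,|t|\bigl(e^{\alpha t^{2}}-1\bigr),\qquad\forall t\in\R.
\]
The threshold condition $\tau\geq\tau_{0}$, combined with (\ref{limitacaod2}) and $(f_{3})$–$(f_{5})$, forces $\|v\|_{H^{1}}$—and therefore $\|v_{n}\|_{H^{1}}$ for $n$ large—so small that $6\alpha\sup_{n}\|v_{n}\|_{H^{1}}^{2}<4\pi$ for a suitable $\alpha>4\pi$. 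Lemma~\ref{Cao} then supplies a uniform bound $\sup_{n}\|e^{\alpha v_{n}^{2}}-1\|_{L^{6}(\R^{2})}<\infty$; together with $v_{n}\to v$ in $L^{6}(\R^{2})$ and H\"older, this gives uniform integrability of $\{|f(v_{n})|^{3}\}$, so $f(v_{n})\to f(v)$ in $L^{3}(\R^{2})$ by Vitali's theorem. Consequently $h_{n}\to(1-V)v+A(z)f(v)$ strongly in $L^{3}(\R^{2})$, and a further subsequence produces $G\in L^{3}(\R^{2})$ with $|h_{n}|+|v_{n}|\leq G$ a.e.

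Step 3 (Conclusion and decay). Substituting $|h_{n}|+|v_{n}|\leq G$ into the local estimate and controlling $\|v_{n}\|_{L^{2}(B_{2}(x))}$ by $\|G\|_{L^{3}(B_{2}(x))}$ via H\"older on the fixed-volume ball, one obtains
\[
\|v_{n}\|_{C(\overline{B_{1}(x)})}\leq C\,|G|_{L^{3}(B_{2}(x))},\qquad\forall n\in\N,\ \forall x\in\R^{2},
\]
with $C$ and $G$ independent of $n$ and $x$. In particular $|v_{n}|_{\infty}\leq C|G|_{L^{3}(\R^{2})}$, and absolute continuity of the integral $x\mapsto\int_{B_{2}(x)}G^{3}\,dy$ (since $G\in L^{3}(\R^{2})$) yields the uniform decay $v_{n}(x)\to 0$ as $|x|\to\infty$. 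The main difficulty lies in Step 2: an exponential-critical nonlinearity in $\R^{2}$ admits no global $L^{3}$ dominator from a mere $H^{1}$ bound, and the entire mechanism rests on the fact that the hypothesis $\tau\geq\tau_{0}$ compresses $\|v\|_{H^{1}}$ below the threshold needed to bring the effective Trudinger–Moser exponent under the critical value $4\pi$.
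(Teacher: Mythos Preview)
Your overall architecture---local elliptic regularity on unit balls plus a uniform $L^{3}$ dominator for the right-hand side---is exactly the paper's, and Steps~1 and~3 are fine. The gap is in Step~2.

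Your key quantitative claim, that ``$\tau\geq\tau_{0}$ \ldots\ forces $\|v\|_{H^{1}}$ \ldots\ so small that $6\alpha\sup_{n}\|v_{n}\|_{H^{1}}^{2}<4\pi$ for a suitable $\alpha>4\pi$'', is not justified by the paper's hypotheses. The threshold $\tau_{0}$ is fixed once and for all merely to secure the energy bound $d_{\lambda}<\widetilde{A}^{2}/2$ of~(\ref{limitacaod2}) with a fixed $\widetilde{A}<1/a$; it does \emph{not} drive $\|v\|_{H^{1}}$ below the very small value your inequality demands (roughly $|\nabla v_{n}|_{2}^{2}<1/6$). You are not free to enlarge $\tau$ further, and the passage from the energy level $d_{\lambda}$ to a pointwise control on $|\nabla v|_{2}$ of that strength is nowhere available.

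The paper avoids this entirely by a much simpler observation. Since $v_{n}\to v$ in $H^{1}(\R^{2})$ (Proposition~\ref{limiteforte2}), a subsequence admits a \emph{pointwise} majorant $H\in H^{1}(\R^{2})$ with $|v_{n}|\leq H$ a.e. Using $|f(t)|\leq C_{1}|t|+C_{2}(e^{5\pi t^{2}}-1)$ one gets
\[
|A(\epsilon_{n}x+\epsilon_{n}y_{n})f(v_{n})-V(x)v_{n}|\leq (\|V\|_{\infty}+A(0)C_{1})H+A(0)C_{2}\bigl(e^{5\pi H^{2}}-1\bigr)=:G.
\]
Now the \emph{first} part of Lemma~\ref{Cao}---$\int(e^{\alpha|u|^{2}}-1)\,dx<\infty$ for every $\alpha>0$ and every $u\in H^{1}(\R^{2})$, with no smallness condition---applied to $H$ gives $(e^{5\pi H^{2}}-1)\in L^{p}(\R^{2})$ for all $p\in[1,\infty)$, hence $G\in L^{3}(\R^{2})$. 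One then invokes the $W^{2,3}$ interior estimates (Gilbarg--Trudinger, Theorems~9.11 and~9.13) and the embedding $W^{2,3}(B_{2}(x))\hookrightarrow C(\overline{B_{1}(x)})$ to conclude. No Vitali argument, no uniform $L^{6}$ bound on $e^{\alpha v_{n}^{2}}-1$, and no norm threshold are needed: the entire ``main difficulty'' you describe disappears once you dominate by a single $H^{1}$ function and use the unconditional part of Trudinger--Moser.
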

\begin{proof}
We know that there are $C_{1},C_{2}>0$ such that
$$
|f(t)|\leq C_{1}|t|+C_{2}(e^{5\pi t^{2}}-1) \quad \forall t \in \mathbb{R}.
$$
By Proposition \ref{limiteforte2}, there exists $H\in H^{1}(\R^{2})$ such that $|v_{n}(x)|\leq H(x)$ for all $n\in\N$ and $x\in\R^{2}$. Setting
$$
G:=(||V||_{\infty}+A(0)C_{1})H+A(0)C_{2}(e^{5\pi H^{2}}-1)\in L^{3}(\R^{2})
$$
it follows that 
$$
|A(\epsilon_{n}x+\epsilon_{n}y_{n})f(v_{n})-V(x)v_{n}|\leq G(x), \quad \mbox{for all} \quad n \in \mathbb{N} \quad \mbox{and} \quad x \in \R^{2}.
$$
Since
$$
\left\{\begin{array}{l}
-\Delta v_{n}+V(x)v_{n}=A(\epsilon_{n}x+\epsilon_{n}y_{n})f(v_{n}),\quad \mbox{in} \quad \R^{2}, \\
v_{n}\in H^{1}(\R^{2})
\end{array}\right.
$$
From \cite[Theorems 9.11 and 9.13]{GT}, there exists $C_{3}>0$ independently of $x\in\R^{2}$ and $n\in\N$ such that $v_{n}\in W^{2,3}(B_{2}(x))$ and
\begin{equation}\label{regularidade1}
||v_{n}||_{W^{2,3}(B_{2}(x))}\leq C_{3}|G|_{L^{3}(B_{2}(x))},\ \ \ \text{for all }n\in\N.
\end{equation}
On the other hand, from continuous embedding $W^{2,3}(B_{2}(x)) \hookrightarrow C(\overline{B_{1}(x)})$, there is $C_{4}>0$ independently of $x\in\R^{2}$ such that  
\begin{equation}\label{regularidade2}
||u||_{C(\overline{B_{1}(x)})}\leq C_{4}||u||_{W^{2,3}(B_{2}(x))},\ \ \ \text{for all }u\in W^{2,3}(B_{2}(x)).
\end{equation}
The result follows from (\ref{regularidade1}) and (\ref{regularidade2}).
\end{proof}

\vspace{0.5 cm}

\noindent\textbf{Concentration of the solutions:}
\\ 

The proof of the concentration follows with the same idea explored in the case $N\geq3$, then we omit its proof.

\end{document}